\numberwithin{equation}{section}
\newtheorem{thm}{Theorem}[section]
\newtheorem{theorem}[thm]{Theorem}
\newtheorem{fact}[thm]{Fact}
\newtheorem{definition}[thm]{Definition}
\newtheorem{lemma}[thm]{Lemma}
\newtheorem{remark}[thm]{Remark}
\newenvironment{proof}[1][Proof]{\textbf{#1.} }{\ \rule{0.5em}{0.5em}}
\newcommand{\htwo}{H^{2|2}}
\newcommand{\hthree}{H^{3|2}}
\newcommand{\hthreeplus}{H^{3|2}_+}
\newcommand{\Id}{\operatorname{Id}}
\newcommand{\diag}{\operatorname{diag}}
\newcommand{\rhs}{{\mathrm{rhs}}}
\newcommand{\wneq}{W^{\neq}}
\newcommand{\body}{\operatorname{body}}
\newcommand{\ig}{\operatorname{IG}}
\newcommand{\sk}[1]{\left\langle{#1}\right\rangle}
\newcommand{\cA}{\mathcal{A}}
\newcommand{\cD}{\mathcal{D}}
\newcommand{\B}{\mathcal{B}}
\newcommand{\E}{{\mathbb E}}
\newcommand{\R}{{\mathbb R}}  
\newcommand{\N}{{\mathbb N}}  
\newcommand{\Z}{{\mathbb Z}}  
\newcommand{\F}{{\mathcal{F}}}
\newcommand{\cnull}{{c_2}}
\newcommand{\cdrei}{{c_1}}
\begin{document}
\thispagestyle{empty}

\begin{center}
  {\LARGE Restrictions of some reinforced processes to subgraphs}\\[3mm]
{\large Margherita Disertori\footnote{Institute for Applied Mathematics
\& Hausdorff Center for Mathematics, 
University of Bonn, \\
Endenicher Allee 60,
D-53115 Bonn, Germany.
E-mail: disertori@iam.uni-bonn.de}
\hspace{1cm} 
Franz Merkl \footnote{Mathematisches Institut, Ludwig-Maximilians-Universit{\"a}t  M{\"u}nchen,
Theresienstr.\ 39,
D-80333 Munich,
Germany.
E-mail: merkl@math.lmu.de
}
\hspace{1cm} 
Silke W.W.\ Rolles\footnote{
Department of Mathematics, CIT, 
Technische Universit{\"{a}}t M{\"{u}}nchen,
Boltzmannstr.\ 3,
D-85748 Garching bei M{\"{u}}nchen,
Germany.
E-mail: srolles@cit.tum.de}
\\[3mm]
{\small \today}}\\[3mm]
\end{center}

\begin{abstract}
  We prove that the restriction of the vertex-reinforced jump process
  to a subset of the vertex set is a mixture of vertex-reinforced jump
  processes. A similar statement holds for the non-linear hyperbolic
  supersymmetric sigma model. This is then applied to vertex-reinforced
  jump processes on subdivided versions of graphs of bounded degree,
  where every edge is replaced by a finite sequence of edges. We prove
  that discrete-time processes associated to suitable corresponding
  restrictions are mixtures of positive recurrent Markov chains.
  We also deduce a similar statement for edge-reinforced random walks. 
  \footnote{MSC2020 subject classifications: Primary 60K35, 60K37; secondary 60G60.}
  \footnote{Keywords and phrases: vertex-reinforced jump process, edge-reinforced
    random walk, restriction, positive recurrence, non-linear supersymmetric hyperbolic sigma model.}
\end{abstract}

\section{Models and Results}
\label{sec:Models-and-Results}

\subsection{Motivation}
\label{sec:Motivation}
One of the biggest open problems concerning vertex-reinforced jump processes, vrjp
for short, is to decide whether the discrete-time process
associated to vrjp on $\Z^2$ is a mixture of \emph{positive} recurrent
Markov chains for all constant initial weights. For small weights, this
problem was solved by Sabot and Tarr\`es \cite{sabot-tarres2012} and with
a completely different technique by Angel, Crawford, and Kozma 
\cite{angel-crawford-kozma}. 
The corresponding statement for recurrence rather than
positive recurrence has been proven for all constant initial weights 
by Sabot in \cite{sabot-polynomial-decay2021}. Solving the above mentioned open
problem seems currently out of reach. 
A possible approach might be the development of
a renormalization group technique for vrjp on $\Z^2$, restricting it to
smaller and smaller sublattices. We show in this paper that restriction of
vrjp to subsets of a finite vertex set is a mixture of vrjp with
random weights, which gives rise to a kind of renormalization flow
on the distributions of these random weights.
As a case study we analyze this flow on subdivided graphs, showing that
it drives the effective random weights towards smaller and smaller
values in a stochastic sense and thus more and more into the positive
recurrent regime.

\subsection{Reinforced processes}
\label{subsec:Reinforced-processes}

Let $G=(\Lambda,E)$ be an undirected locally finite connected graph
with vertex set $\Lambda$ endowed with edge weights $W_e=W_{ij}>0$, $e=\{i,j\}\in E$. 
The vertex-reinforced jump process (vrjp) on $G$ is a continuous-time jump process
$(Y_t)_{t\ge 0}$ taking values in $\Lambda$. Conditioned on $(Y_s)_{s\le t}$
and $Y_t=i\in\Lambda$,
it jumps to a neighboring vertex $j\in\Lambda$ at the rate
$W_{ij}L_j(t)$ with $L_j(t)=1+\int_0^t1_{\{Y_s=j\}}\, ds$ being the local time at
$j$ with an offset of 1. Vrjp was invented by Werner in 2000.
Sabot and Tarr\`es \cite{sabot-tarres2012} introduced the vrjp in exchangeable time
scale $(Z_t:=Y_{D^{-1}(t)})_{t\ge 0}$ with the time change $D(t)=\sum_{i\in\Lambda}(L_i(t)^2-1)$.
In this time scale, vrjp is a mixture of reversible Markov jump processes; see
\cite[Theorem 2]{sabot-tarres2012} and \cite[Theorem 1]{sabot-zeng15}.

We remark that the vrjp on infinite graphs might make infinitely many jumps
in finite time, which means explosion in finite time, if the weights $W_e$ increase
fast enough far out. However, on finite graphs, this does not occur almost
surely. 

For technical reasons, we encode a continuous-time jump process on $G$ by
two sequences of random
variables $(X_n)_{n\in\N_0}$ and $(T_n)_{n\in\N_0}$. The random variable $X_n$
takes values in $\Lambda$; it encodes the $n$-th position visited.
The event $\{X_n=X_{n+1}\}$ may occur with positive probability. If such an
event occurs, we say that the process has a self-loop. The random variable
$T_n$ takes positive real values; it encodes the waiting time for the jump
from $X_n$ to $X_{n+1}$.
The connection of this description to a continuous-time $\Lambda$-valued jump process
$(Y_t)_{t\ge 0}$ can be described on the event $\{\sum_{n=0}^\infty T_n=\infty\}$
by $Y_t=X_n$ for $\sum_{l=0}^{n-1}T_l\le t<\sum_{l=0}^n T_l$. Note that in the
representation $(Y_t)_{t\ge 0}$ the information on self-loops is lost.
In the case of explosion in finite time, the sum $\sum_{n=0}^\infty T_n$ is finite.
In this case, $Y_t$ is only defined for $t<\sum_{n=0}^\infty T_n$, but 
the description in terms of $X_n,T_n$ still exists for all $n$.
This is why we do not need any assumptions
on the weights $W$ that avoid explosions in finite time. 

Our first result concerns the vrjp restricted to a subset $J\subseteq\Lambda$.
We define the restriction to a subset for a general continuous-time jump process. 

\begin{definition}[Removal of self-loops and restriction to a subset: process]
  \label{def:restriction-to-a-subset}
\mbox{}\\
  Let $(X_n,T_n)_{n\in\N_0}$ be a continuous-time jump process on $G$.
  Recursively, we take $\sigma_0:=0$ and for $n\in\N$, on the event
  $\{\sigma_{n-1}<\infty\}$, we define 
  $\sigma_n:=\inf\{l>\sigma_{n-1}:X_l\neq X_{\sigma_{n-1}}\}$ to be the index
  of the next jump to a different location. 
  The process with self-loops removed is defined by 
  $(X^{\neq},T^{\neq})=(X^{\neq}_n,T^{\neq}_n)_{n\in\N_0}:=(X_{\sigma_n},\sum_{l=\sigma_n}^{\sigma_{n+1}-1}T_l)_{n\in\N_0}$
  on the event $\{\sigma_n<\infty\text{ for all }n\in\N\}$. 

  Let $\emptyset\neq J\subseteq\Lambda$. We set recursively $\tau_0=0$, 
  $\tau_n=\inf\{ l>\tau_{n-1}: X_l\in J\}$
  for $n\in\N$. In other words, on the event $\{X_0\in J\}$, 
  $\tau_n$ denotes the number of jumps up to the $n$-th return to $J$. 
  The restriction of the process to $J$ is defined
  on the event $\{\tau_n<\infty\text{ for all }n\in\N\}$ by
  $(X^J,T^J)=(X_n^J,T_n^J)_{n\in\N_0}:=(X_{\tau_n},T_{\tau_n})_{n\in\N_0}$.

  The notation $(X^{J\neq},T^{J\neq})$ means that both operations have
  been applied to the process $(X,T)$, first the restriction to $J$ and
  then self-loop removal.
\end{definition}

One may visualize the restriction as editing a film of the continuous
time representation of the jump process, cutting out all parts of the film
where the jumping particle is not in~$J$, but the cut locations in
the edited film remain visible as self-loops. Removal of these self-loops
in this edited film means that the corresponding cut locations become invisible. 

Note that the definitions of $X^{\neq}$ and $X^J$ do not use the
  $T$-components of the process. In particular, the definitions of $X^{\neq}$, $X^J$,
  and $X^{J\neq}$ make also sense if one starts with a discrete-time process
  $(X_n)_{n\in\N_0}$ only. 

The mixing measure representing vrjp in exchangeable time scale as a mixture of
Markov jump processes has been described in terms of a random field
$\beta_\Lambda=(\beta_i)_{i\in\Lambda}$
in \cite[Proposition 2]{sabot-tarres-zeng2017} for finite graphs 
and in \cite[Theorem 1]{sabot-zeng15} for infinite graphs. Because the law
$\nu^W_\Lambda$ of this random field $\beta_\Lambda$ appears in the present paper
in an additional role, we review it first for a finite set
$\Lambda$ including a pinning point $\rho\in\Lambda$. Take a
symmetric matrix $W=(W_{ij})_{i,j\in\Lambda}\in[0,\infty)^{\Lambda\times\Lambda}$
of weights.
Note that $W$ may have positive diagonal entries.
For $\beta\in\R^\Lambda$, define
\begin{align}
  \label{eq:def-H-beta}
  H_{\Lambda,\beta}^W=H_\beta^W=H_\beta:=2\diag(\beta)-W, 
\end{align}
where $\diag(\beta)$ denotes the diagonal matrix with diagonal entries
given by $\beta_i$, $i\in\Lambda$. 
Let $1\in\R^\Lambda$ denote the column vector having all entries equal
to 1, which implies that the Euclidean inner product $\sk{1,H_\beta 1}$ is the
sum of all entries
of the matrix $H_\beta$. The law of $\beta_\Lambda$ equals the probability measure
\begin{align}
  \label{eq:def-nu}
  \nu^W_\Lambda(d\beta)
  := \left(\frac{2}{\pi}\right)^{\frac{|\Lambda|}{2}}1_{\{ H_\beta>0 \}}
  \frac{e^{-\frac12\sk{1,H_\beta 1}}}{\sqrt{\det H_\beta}}\, d\beta
\end{align}
on $\R^\Lambda$, where the notation $H_\beta>0$ means that the matrix $H_\beta$ is
positive definite. The probability measure $\nu^W_\Lambda$ was 
introduced for $W_{ii}=0$ in 
\cite[Definition 1]{sabot-tarres-zeng2017} and generalized for $W_{ii}\ge 0$
in \cite[Section 5.1]{sabot-zeng15}; see also \cite[Section 4]{letac-wesolowski2020}.
\cite[Proposition 1]{sabot-zeng15} extends the definition of $\nu^W_\Lambda$
to infinite graphs.

We use the following notation. For a vector $v\in\R^K$, a matrix
$A\in\R^{K\times L}$, and subsets $I\subseteq K$ and $J\subseteq L$ of the index
sets, we denote by $v_I$ the restriction of $v$
to $I$ and by $A_{IJ}$ the restriction of $A$ to $I\times J$.
Let $P_\rho^{W,\Lambda}$ denote the law of the vrjp in exchangeable time scale
on $\Lambda$ starting in $\rho$ with weights $W$.

\begin{theorem}[Restriction of vrjp as a mixture of vrjps]
  \label{thm:mixture-of-vrjps}
  \mbox{}\\
  Assume that the graph $G$ is finite without self-loops and partition its vertex set $\Lambda=I\cup J$,
  $I\cap J=\emptyset$, with $|J|\ge 2$. 
  Consider the vrjp $(X,T)$ in exchangeable time scale on $\Lambda$
  starting at $\rho\in J$ with weights $W$. 
  The restrictions $(X^J,T^J)$ and $(X^{J\neq},T^{J\neq})$ to $J$ without or with
  self-loops removed are
  mixtures of vrjps in exchangeable time scale on $J$ with random weights
    \begin{align}
    \label{def:W-J}
    W^J(\beta_I)=&(W^J_{ij}(\beta_I))_{i,j\in J}:=W_{JJ}+W_{JI}([H_\beta]_{II})^{-1}W_{IJ}
                    \text{ and }\\
      \label{eq:def-W-J-neq}
    W^{J\neq}(\beta_I):=&\left(W^J_{ij}(\beta_I)1_{\{ i\neq j\}}\right)_{i,j\in J},
  \end{align}
  respectively. They depend on a random
  vector $\beta_I\in\R^I$, where $\beta_{I\cup\{\rho\}}\in\R^{I\cup\{\rho\}}$ is distributed
  according to $\nu_{I\cup\{\rho\}}^{\widehat W}$ with $\widehat W\in\R^{(I\cup\{\rho\})\times(I\cup\{\rho\})}$
  obtained by restricting the parameters $W$ to $I$ and wiring all points
  in $J$ at $\rho$:
  \begin{align}
    \label{eq:def-W}
    \widehat W_{ij}=\widehat W_{ji}=
    \begin{cases}
      W_{ij} &\text{ for }i,j\in I,\\
      \sum_{k\in J}W_{ik} &\text{ for }i\in I, j=\rho,\\
      0 &\text{ for }i=j=\rho.
    \end{cases}
  \end{align}
  In the case of $(X^{J\neq},T^{J\neq})$ this means the
  following for any event $A\subseteq J^{\N_0}\times\R_+^{\N_0}$. 
  \begin{align}
    P_\rho^{W,\Lambda}((X^{J\neq},T^{J\neq})\in A)
    =&\int_{\R^{I\cup\{\rho\}}}
       P_\rho^{W^{J\neq}(\beta_I),J}((X,T)\in A)\,\nu_{I\cup\{\rho\}}^{\widehat W}(d\beta_{I\cup\{\rho\}})\nonumber\\
    =&\int_{\R^\Lambda}
       P_\rho^{W^{J\neq}(\beta_I),J}((X,T)\in A)\,\nu_\Lambda^W(d\beta_\Lambda).
       \label{eq:mixture-of-vrjps}
  \end{align}
  The analogous statement holds for $(X^J,T^J)$. 
\end{theorem}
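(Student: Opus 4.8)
The plan is to condition on the random field $\beta_\Lambda$ and reduce the statement to two essentially independent parts: a pathwise statement about traces of Markov jump processes, and an algebraic factorization of the mixing measure $\nu_\Lambda^W$ from \eqref{eq:def-nu}. By the Sabot--Tarr\`es--Zeng representation, conditionally on $\beta_\Lambda=\beta$ the vrjp in exchangeable time scale is a reversible Markov jump process $Q_\rho^{W,\beta}$ whose off-diagonal jump rate from $i$ to $j$ is proportional to $W_{ij}\psi_j/\psi_i$, where $\psi_i:=(H_\beta^{-1})_{i\rho}$; in particular $\psi$ is $H_\beta$-harmonic off $\rho$, hence on all of $I$ since $\rho\in J$. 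Writing $P_\rho^{W,\Lambda}=\int Q_\rho^{W,\beta}\,\nu_\Lambda^W(d\beta)$, I would show that applying the restriction of Definition~\ref{def:restriction-to-a-subset} (with, resp.\ without, self-loop removal) to $Q_\rho^{W,\beta}$ yields exactly the $\beta_J$-conditioned vrjp on $J$ with weights $W^{J\neq}(\beta_I)$ (resp.\ $W^J(\beta_I)$), and then reorganize the $\beta$-integral using the measure factorization.

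For the pathwise part, $X^J$ is the trace of $Q_\rho^{W,\beta}$ on $J$, which is again a Markov jump process whose generator is the Schur complement of the full generator onto the $J$-block. The decisive algebraic fact is that the Schur complement of $H_\beta=2\diag(\beta)-W$ onto $J$ equals $2\diag(\beta_J)-W^J(\beta_I)=H_{\beta_J}^{W^J(\beta_I)}$, and that, because $\rho\in J$, the restriction $\psi_J$ coincides with the Green's function $((H_{\beta_J}^{W^J(\beta_I)})^{-1})_{\cdot\,\rho}$ of this Schur complement; together these identify the trace rates with the conditioned vrjp-on-$J$ rates $\propto W^J_{ij}\psi_j/\psi_i$. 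The holding times match too: the $H_\beta$-harmonicity of $\psi$ on $I$ gives $[H_\beta]_{II}\psi_I=W_{IJ}\psi_J$, which is exactly what is needed for the total exit rate of $i\in J$ in the $W^J$-process to equal the original exit rate of $i$ to all of $\Lambda$. Finally, the two self-loop conventions are separated here: keeping self-loops in $X^J$ records the diagonal weights $W^J_{ii}=(W_{JI}[H_\beta]_{II}^{-1}W_{IJ})_{ii}$ (excursions into $I$ returning to the same $J$-vertex), whereas removing them sums the corresponding holding times and produces the genuine trace, i.e.\ the $W^{J\neq}(\beta_I)$-process; the diagonal shift $\beta_J\mapsto\beta_J-\tfrac12\diag(W^J(\beta_I))$ leaving $H_\beta$ invariant is what intertwines the $W^J$- and $W^{J\neq}$-descriptions.

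For the measure part, I would factorize the density in \eqref{eq:def-nu}. Using $\det H_\beta=\det[H_\beta]_{II}\cdot\det H_{\beta_J}^{W^J(\beta_I)}$, the Schur positivity criterion $H_\beta>0\iff[H_\beta]_{II}>0$ and $H_{\beta_J}^{W^J(\beta_I)}>0$, and completing the square to write $\sk{1,H_\beta 1}=\sk{1_J,H_{\beta_J}^{W^J(\beta_I)}1_J}+f(\beta_I)$ with $f(\beta_I)=\sk{1_I,[H_\beta]_{II}1_I}-2\sk{1_I,w}+\sk{w,[H_\beta]_{II}^{-1}w}$ and $w:=W_{IJ}1_J$ depending only on $\beta_I$, the density splits as a product. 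This shows that conditionally on $\beta_I$ the law of $\beta_J$ under $\nu_\Lambda^W$ is $\nu_J^{W^J(\beta_I)}$. It remains to identify the $\beta_I$-marginal: integrating $\beta_\rho$ out of $\nu_{I\cup\{\rho\}}^{\widehat W}$, a one-dimensional $\Gamma$-type integral over $\{2\beta_\rho>\sk{w,[H_\beta]_{II}^{-1}w}\}$ (using $\widehat W_{i\rho}=\sum_{k\in J}W_{ik}=w_i$), reproduces exactly the factor $1_{\{[H_\beta]_{II}>0\}}(\det[H_\beta]_{II})^{-1/2}e^{-f(\beta_I)/2}$, with matching normalizing constants; hence the $\beta_I$-marginals of $\nu_\Lambda^W$ and of $\nu_{I\cup\{\rho\}}^{\widehat W}$ agree.

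Assembling these, $P_\rho^{W,\Lambda}((X^{J\neq},T^{J\neq})\in A)=\int Q_\rho^{W,\beta}((X^{J\neq},T^{J\neq})\in A)\,\nu_\Lambda^W(d\beta)$ becomes $\int Q_\rho^{W^{J\neq}(\beta_I),\beta_J}((X,T)\in A)\,\nu_\Lambda^W(d\beta)$ by the pathwise part; disintegrating $\nu_\Lambda^W$ into its $\beta_I$-marginal and the conditional $\nu_J^{W^J(\beta_I)}$ and carrying out the $\beta_J$-integral reconstitutes $P_\rho^{W^{J\neq}(\beta_I),J}((X,T)\in A)$, giving the second line of \eqref{eq:mixture-of-vrjps}; the first line then follows because the integrand depends on $\beta$ only through $\beta_I$ and the two $\beta_I$-marginals coincide. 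The case $(X^J,T^J)$ is identical with $W^{J\neq}$ replaced by $W^J$. I expect the main obstacle to be the pathwise part: not the Schur-complement algebra, which is routine, but the continuous-time bookkeeping needed to verify that the trace together with self-loop removal respects the exchangeable time scale and reproduces the full joint law of $(X^{J\neq},T^{J\neq})$ rather than merely the embedded jump chain.
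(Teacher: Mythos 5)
Your proposal is correct and follows essentially the same route as the paper: condition on $\beta$ via the Sabot--Tarr\`es representation, identify the trace of the conditional Markov jump process on $J$ with the $\beta_J$-conditioned vrjp on $J$ with weights $W^{J}(\beta_I)$ (your Green's-function identity $\psi_J=((H_{\beta_J}^{W^J(\beta_I)})^{-1})_{\cdot\,\rho}$ via the block-inverse formula is exactly the content of the paper's Lemma \ref{le:mixture-given-beta-I}, and your Schur-complement trace computation matches the geometric-series argument in Lemma \ref{le:law-restriction} and the proof of the theorem), and then disintegrate $\nu_\Lambda^W$. The only substantive difference is that you propose to derive the conditioning and restriction properties of the $\beta$-field by hand (Schur determinant factorization, completing the square, and the $\Gamma$-type integral over $\beta_\rho$ -- all of which check out, including the normalizing constants), whereas the paper simply cites these from Sabot--Zeng in Remark \ref{rem:restriction-conditioning-property-beta}.
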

Note that on the l.h.s.\ in \eqref{eq:mixture-of-vrjps} the process $(X^{J\neq},T^{J\neq})$
is built from the canonical process $(X,T)$
on $\Lambda^{\N_0}\times\R_+^{\N_0}$, while on the r.h.s.\ $(X,T)$
means the canonical process on $J^{\N_0}\times\R_+^{\N_0}$.

 An explicit formula for the probability density of $\beta_I$ can be
 found in \cite[Lemma 4 combined with Lemma 5(i)]{sabot-zeng15}; however
 we do not need it here. 

\paragraph{Comparison with the restriction property observed by Davis and Volkov.}
In the special case of $J$ being a set of consecutive integers
on a one-dimensional integer interval this restriction property
has already been observed by Davis and Volkov in
\cite[Section 3]{davis-volkov1}. In this special case, $W^{J\neq}$
is deterministic and equals
the restriction of $W$ to $J\times J$. Hence, in this special case,
$(X^{J\neq},T^{J\neq})$ is again a vrjp, not only a mixture of vrjps.
The analogous property holds on a tree. 

\paragraph{Subdivisions.}
For $r\in\N_0$, we define the $2^r$-subdivision $G_r=(\Lambda_r,E_r)$ of
the undirected graph $G=(\Lambda,E)$ to be obtained by replacing every edge in
$G$ by a series
of $2^r$ edges; see Figure \ref{fig:subdivision} for an illustration.
It will be convenient to have $\Lambda_0=\Lambda$ and
$\Lambda_l\subseteq\Lambda_r$ for $l\le r$; see also Definition \ref{def:subdivided-graph}
below. 

\begin{figure}
  \centering
\includegraphics[width=0.8\textwidth]{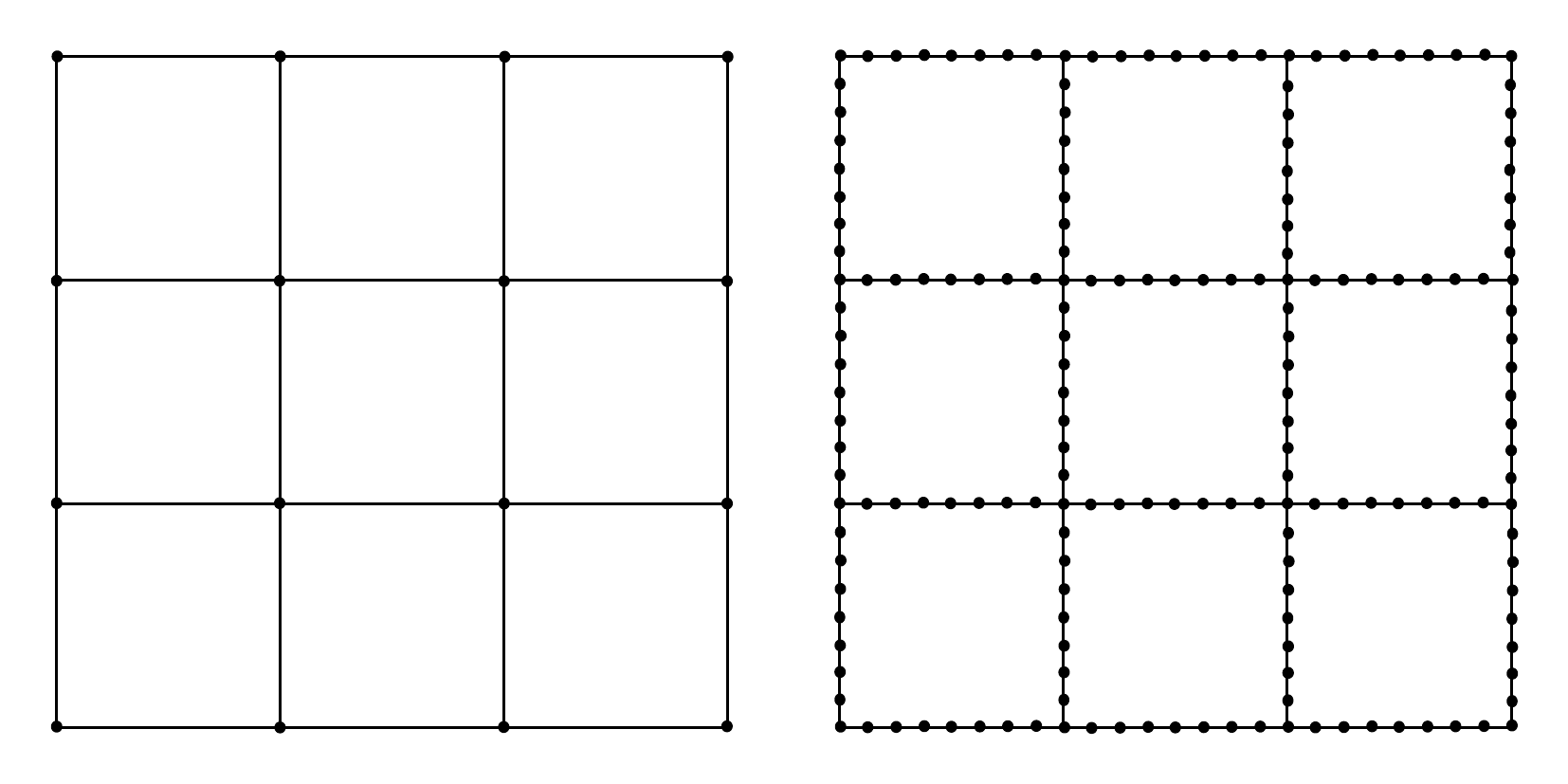}
\caption{A part of $\Z^2$ on the left and its 8-subdivided version on the right.}
  \label{fig:subdivision}
\end{figure}

The next result considers vrjp on a subdivided graph $G_r$ with random weights
$W$. If the graph has degree bounded by $d$, the following
result of Sabot and Tarr\`es allows us to deduce recurrence of the restriction
to $G_l$ with $l<r$, 
provided $r-l$ is large enough depending on $\E[W_e^\alpha]$, $d$, and $\alpha$.

\begin{fact}[{\cite[Corollary 3]{sabot-tarres2012}; see also
    \cite[Theorem 20]{angel-crawford-kozma}}]
  \label{fact:sabot-tarres-recurrence}
  \mbox{}\\
  Let $d\in\N$ and $\alpha\in(0,\frac14]$. Then, there is $\cdrei=\cdrei(d,\alpha)>0$ 
  such that for all connected undirected graphs $G=(\Lambda,E)$ with vertex degree
  bounded by $d$, all independent random weights $W=(W_e)_{e\in E}$
  (not necessarily identically distributed) with $\E[W_e^\alpha]\le\cdrei$ for
  all $e\in E$, and all starting points $\rho\in \Lambda$, 
  the discrete-time process associated to vrjp with random weights $W$
  starting in $\rho$ is a
  mixture of positive recurrent Markov chains. 
\end{fact}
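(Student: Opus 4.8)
Since Fact~\ref{fact:sabot-tarres-recurrence} is quoted from \cite{sabot-tarres2012} and \cite{angel-crawford-kozma}, I only sketch the route I would take. The natural starting point is the mixture representation recalled above: conditionally on the random field $\beta_\Lambda\sim\nu^W_\Lambda$, the discrete-time process associated to the vrjp is an irreducible reversible Markov chain on $\Lambda$. Writing $G=(H_\beta)^{-1}$ for the Green function of $H_\beta=2\diag(\beta)-W$ and letting $u$ be the potential field normalized by $u_\rho=0$ with $e^{u_i}=G_{\rho i}/G_{\rho\rho}$, the continuous-time jump process has reversible measure $e^{2u_i}$ and rates $W_{ij}e^{u_j-u_i}$, so its embedded skeleton is reversible with conductances $c_{ij}=W_{ij}e^{u_i+u_j}$ and reversible measure $\pi_i=\sum_j c_{ij}$. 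An irreducible reversible chain is positive recurrent exactly when its reversible measure is summable, so it suffices to prove that $\sum_{\{i,j\}\in E}W_{ij}e^{u_i+u_j}<\infty$ almost surely.

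To obtain this quenched summability I would pass to annealed fractional moments. For $\alpha\in(0,1]$ and nonnegative terms one has $(\sum_k a_k)^\alpha\le\sum_k a_k^\alpha$, so by Fubini it is enough to show $\sum_{\{i,j\}\in E}\E[(W_{ij}e^{u_i+u_j})^\alpha]<\infty$ for the given $\alpha\in(0,\tfrac14]$, since finiteness of this annealed sum forces finiteness of the quenched sum almost surely. Each summand is a fractional moment of a product of one weight with two Green-function values $G_{\rho i},G_{\rho j}$. The crux is therefore a decay estimate for these fractional moments: I would prove $\E[(W_{ij}e^{u_i+u_j})^\alpha]\le C\lambda^{\dist(\rho,\{i,j\})}$ with $\lambda=\lambda(\cdrei,d,\alpha)\to 0$ as $\cdrei\to 0$. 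This can be approached either via a fractional-moment method of Aizenman--Molchanov type for the random Schrödinger operator $H_\beta$, or, following \cite{sabot-tarres2012} more closely, via the localization estimates for the $\htwo$ sigma model at small coupling, which control $\E[e^{\alpha(u_i+u_j)}]$ by a product of single-edge factors along paths from $\rho$, each factor driven to zero as $\E[W_e^\alpha]$ decreases.

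Granting such exponential decay, the degree bound gives at most $d^{\,n+1}$ edges at graph distance $n$ from $\rho$, so that
\[
  \sum_{\{i,j\}\in E}\E[(W_{ij}e^{u_i+u_j})^\alpha]\le C\sum_{n\ge 0}d^{\,n+1}\lambda^{\,n},
\]
which converges once $\cdrei=\cdrei(d,\alpha)$ is chosen small enough that $\lambda<1/d$. The bound is uniform over all connected graphs of degree at most $d$ and over all starting points $\rho$, so this yields the desired almost-sure positive recurrence with a single constant $\cdrei(d,\alpha)$.

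The main obstacle is precisely the fractional-moment decay of $G_{\rho i}$, and with it the origin of the restriction $\alpha\le\tfrac14$. The field $\beta$ is strongly correlated, so the factorization of $e^{\alpha(u_i+u_j)}$ along a path is not immediate; it becomes available only after changing variables to the increments of $u$ across edges and establishing, uniformly in the graph, an a priori single-edge moment bound that is suppressed as $\E[W_e^\alpha]\to 0$. Proving this suppression, and verifying that the admissible exponent reaches but does not exceed $\tfrac14$, is the technical heart of \cite{sabot-tarres2012} and \cite{angel-crawford-kozma}; by contrast the reduction in the first paragraph and the path-counting in the last are routine.
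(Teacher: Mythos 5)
The paper gives no proof of this statement: it is imported verbatim as a ``Fact'' from \cite[Corollary 3]{sabot-tarres2012} (see also \cite[Theorem 20]{angel-crawford-kozma}), so there is nothing internal to compare your argument against. Your outline is a faithful account of the route taken in \cite{sabot-tarres2012}: the $\beta$/$u$-field mixture representation, the reduction of positive recurrence to almost-sure summability of the conductances $W_{ij}e^{u_i+u_j}$, the passage to annealed fractional moments via $(\sum_k a_k)^\alpha\le\sum_k a_k^\alpha$, and the path-counting against the degree bound are all correct and are indeed the ``routine'' parts. Be aware, though, that the entire mathematical content of the cited result is the exponential decay $\E[(W_{ij}e^{u_i+u_j})^\alpha]\le C\lambda^{\dist(\rho,\{i,j\})}$ with $\lambda$ small when $\E[W_e^\alpha]$ is small (and the fact that this works precisely for $\alpha\le\tfrac14$); you state this as the crux but do not prove it, so your text is a correct reduction to the cited localization estimate rather than an independent proof --- which is acceptable here, since the statement itself is a citation.
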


\begin{theorem}[Vrjp on subdivided graphs]
  \label{thm:vrjp-on-G-r-pos-rec}
  Let $G=(\Lambda,E)$ be a connected undirected graph without self-loops
  and take $l,r\in\N_0$ with $l\le r$.
  Let $(X,T)$ be vrjp in exchangeable time scale on the subdivided graph $G_r$ with
  starting point $\rho\in\Lambda$ and random weights $W_e>0$, $e\in E_r$, with
  respect to some probability measure with corresponding expectation $\E$. Its restriction
  $(X^{\Lambda_l\neq},T^{\Lambda_l\neq})$ to $\Lambda_l$
  with self-loops removed is again a mixture of vrjps on $G_l$ with random
  weights denoted by $W^{(l)}=(W_e^{(l)})_{e\in E_l}$. If the family $W=(W_e)_{e\in E_r}$ is
  independent or i.i.d., then so is the family $W^{(l)}$. Given $W$, the conditional
  law of $W^{(l)}$ can be described in terms of the restriction
  $\beta_{\Lambda_r\setminus\Lambda_l}$ of a $\nu_{\Lambda_r}^W$-distributed
  random field $\beta$. 
  For any finite subgraph $\tilde G$ of $G$ and the corresponding subdivision
  $\tilde G_l=(\tilde\Lambda_l,\tilde E_l)$, the restriction of $W^{(l)}$ to
  $\tilde E_l$ equals $W^{\tilde\Lambda_l}$ given in \eqref{def:W-J} with $J=\tilde\Lambda_l$
  and $I=\tilde\Lambda_r\setminus\tilde\Lambda_l$, and
  fulfills the recursion equations described in 
  Lemma \ref{le:aux-W-l-1}, below. In particular, one has 
  \begin{align}
    \label{eq:mixture-of-vrjps-lambda-r}
    &P_\rho^{W,\Lambda_r}((X^{\Lambda_l\neq},T^{\Lambda_l\neq})\in\cdot)=\int_{\R^{\Lambda_r}}
      P_\rho^{W^{(l)\neq}(\beta),\Lambda_l}((X,T)\in\cdot)\,
      \nu_{\Lambda_r}^W(d\beta).
  \end{align}
  Assume that the vertex degree of $G$ is bounded by $d$. Moreover, assume
  that the weights $W_e$, $e\in E_r$, are i.i.d.\ and satisfy
  $\E[W_e^\alpha]\le\cdrei 2^{\alpha(r-l)}$ for some $\alpha\in (0,\tfrac14]$
  with the 
  constant $\cdrei(d,\alpha)$ from Fact \ref{fact:sabot-tarres-recurrence}. 
  Then, the process $X^{\Lambda_l\neq}$ is a mixture of positive recurrent
  reversible Markov chains.
\end{theorem}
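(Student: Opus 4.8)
The plan is to obtain everything except the final positive-recurrence claim as a direct consequence of Theorem~\ref{thm:mixture-of-vrjps} applied with $J=\Lambda_l$ and $I=\Lambda_r\setminus\Lambda_l$. Since $G$ has no self-loops and, for $r>l$, every edge of $G_l$ is subdivided, no two vertices of $\Lambda_l$ are adjacent in $G_r$, so $W_{JJ}=0$ and the random weights reduce to $W^{(l)}=W_{JI}([H_\beta]_{II})^{-1}W_{IJ}$. The decisive structural observation is that $\Lambda_l$ separates $I$ into the vertex-disjoint chains $C_e$, $e\in E_l$, formed by the interior subdivision vertices of each edge; hence $[H_\beta]_{II}=\bigoplus_{e\in E_l}[H_\beta]_{C_eC_e}$ is block diagonal, $([H_\beta]_{II})^{-1}$ likewise, and each effective weight $W^{(l)}_e$ is a function of $\beta_{C_e}$ and of the weights along $C_e$ alone. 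The mixture identity \eqref{eq:mixture-of-vrjps-lambda-r} and the identification of the restriction of $W^{(l)}$ to a finite subgraph with $W^{\tilde\Lambda_l}$ from \eqref{def:W-J} then follow from \eqref{eq:mixture-of-vrjps}.

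For the independence/i.i.d. assertion the natural route is through the wired representation of Theorem~\ref{thm:mixture-of-vrjps}, in which $\beta_{I\cup\{\rho\}}\sim\nu^{\widehat W}_{I\cup\{\rho\}}$ and $\rho$ is the single vertex to which all of $\Lambda_l$ is contracted; there the chains $C_e$ become petals attached to $\rho$. Expanding $\det H_\beta$ by the Schur complement in $\beta_\rho$ and the matrix-determinant lemma gives $\det H_\beta=\big(\prod_e\det[H_\beta]_{C_eC_e}\big)\big(2\beta_\rho-\sum_e g_e\big)$ with each $g_e$ a function of $\beta_{C_e}$ and of the weights along $C_e$ only; the constraint $\{H_\beta>0\}$ and the linear term $\langle1,H_\beta1\rangle$ factorize accordingly. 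Conditionally on $\beta_\rho$ the fields $(\beta_{C_e})_e$ are therefore independent, but since $W^{(l)}_e$ does not depend on $\beta_\rho$ the relevant object is the marginal of $\beta_I$; carrying out the elementary Gaussian-type integral over $\beta_\rho$ turns the coupling into a product factor $e^{-\frac12\sum_e g_e}$, so that the marginal of $\beta_I$ factorizes over the chains. As distinct chains occupy disjoint edge sets of $G_r$, the weights $W^{(l)}_e$ are genuinely independent, and identically distributed when $W$ is, because all chains are subdivided edges of the same length $2^{r-l}$.

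Next I would set up the recursion of Lemma~\ref{le:aux-W-l-1} by iterating the restriction one subdivision level at a time, $G_r\to G_{r-1}\to\cdots\to G_l$; this is legitimate because the restriction and self-loop-removal operations compose (the cut times for $\Lambda_l\subseteq\Lambda_{l+1}$ are nested) and because mixtures of vrjps are closed under restriction by Theorem~\ref{thm:mixture-of-vrjps}. At each step the removed midpoints form an independent set, so $[H_\beta]_{II}$ is diagonal and one obtains the multiplicative recursion $W^{(l)}_e=W^{(l+1)}_{e_1}W^{(l+1)}_{e_2}/(2\beta_v)$, where $e_1,e_2$ are the two halves of $e$, $v$ is the midpoint, and, given $W^{(l+1)}_{e_1},W^{(l+1)}_{e_2}$, the variable $\beta_v$ is generalized-inverse-Gaussian with parameters determined by $W^{(l+1)}_{e_1}+W^{(l+1)}_{e_2}$, independently across midpoints.

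The final step, which I expect to be the main obstacle, is the moment contraction. Using the explicit GIG density one computes $\E[(2\beta_v)^{-\alpha}\mid W^{(l+1)}_{e_1},W^{(l+1)}_{e_2}]=(W^{(l+1)}_{e_1}+W^{(l+1)}_{e_2})^{-\alpha}\,K_{1/2-\alpha}(\cdot)/K_{1/2}(\cdot)$, and monotonicity of the modified Bessel function $K_\nu$ in $\nu\ge0$ bounds the Bessel ratio by $1$. Combined with the elementary inequality $\big(\tfrac{ab}{a+b}\big)^\alpha\le 2^{-1-\alpha}(a^\alpha+b^\alpha)$ for $\alpha\in(0,1]$ and the independence of $W^{(l+1)}_{e_1},W^{(l+1)}_{e_2}$, this yields the single-step contraction $\E[(W^{(l)}_e)^\alpha]\le 2^{-\alpha}\,\E[(W^{(l+1)}_{e'})^\alpha]$. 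Iterating over the $r-l$ levels gives $\E[(W^{(l)}_e)^\alpha]\le 2^{-\alpha(r-l)}\E[W_e^\alpha]\le\cdrei$, exactly the threshold in Fact~\ref{fact:sabot-tarres-recurrence}. Since subdivision leaves the degree bound $d$ unchanged (for $d\ge2$) and $W^{(l)}$ is i.i.d., Fact~\ref{fact:sabot-tarres-recurrence} applies to $G_l$ and shows that $X^{\Lambda_l\neq}$ is a mixture of positive recurrent reversible Markov chains. The delicate points are the Bessel-ratio bound and the verification that the two inequalities compose to the clean factor $2^{-\alpha}$ per level; the unconditional independence via the $\beta_\rho$-integration is the other place where the argument could go wrong if one stopped at conditional independence given the separator.
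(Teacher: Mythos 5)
Your overall strategy matches the paper's: apply Theorem \ref{thm:mixture-of-vrjps} with $J=\Lambda_l$, set up the level-by-level recursion for the effective weights, prove the one-step contraction $\E[(W^{(l-1)}_{\bar e})^\alpha]\le 2^{-\alpha}\E[(W^{(l)}_{e'})^\alpha]$, iterate it $r-l$ times, and invoke Fact \ref{fact:sabot-tarres-recurrence}. Two sub-arguments differ from the paper's in a legitimate way. For the independence of $(W^{(l)}_e)_{e\in E_l}$ you exploit the block-diagonality of $[H_\beta]_{II}$ over the chains $C_e$ and factorize the marginal of $\beta_I$ by integrating out $\beta_\rho$ in the wired measure; the computation is correct (the integral over $\beta_\rho$ is a Gamma$(\tfrac12)$-type integral producing the product factor $e^{-\frac12\sum_e g_e}$) and amounts to a by-hand derivation of the $1$-dependence of the $\beta$-field. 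The paper instead proves independence by downward induction on $l$ (item \ref{item5} of Lemma \ref{le:aux-W-l-1}), using that, given $W^{(l)}$, the midpoint variables are conditionally independent inverse Gaussians; your route is more direct but requires the determinant/indicator factorization, while the paper's recycles the recursion it needs anyway. For the moment bound you replace the paper's Jensen inequality by the Bessel-ratio formula for inverse-Gaussian moments (monotonicity of $K_\nu$ in $|\nu|$ does give a ratio $\le 1$ for $\alpha\in[0,1]$) and the AM--GM/Cauchy--Schwarz combination by $(ab/(a+b))^\alpha\le2^{-1-\alpha}(a^\alpha+b^\alpha)$; both yield exactly the factor $2^{-\alpha}$ per level.

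Two points need repair. First, Theorem \ref{thm:mixture-of-vrjps} is stated for finite graphs only, while the present theorem allows $G$ to be infinite (the motivating case, e.g.\ $\Z^2$), so you cannot apply it directly with $J=\Lambda_l$ and $I=\Lambda_r\setminus\Lambda_l$. The paper exhausts $G$ by finite connected subgraphs $G^N$, observes that the finite-dimensional marginals of $W^{(l)}$ and the law of any finite initial segment of $X^{\Lambda_l\neq}$ stabilize for large $N$ (your observation that $W^{(l)}_e$ depends only on the chain $C_e$ and the weights on that subdivided edge is exactly what guarantees this volume-independence), and passes to the limit; this step is missing from your write-up. Second, in the recursion the denominator must be $2\beta_v^{(l+1)\neq}=2\beta_v-W^{(l+1)}_{vv}$ rather than $2\beta_v$: after the first restriction step the effective weight matrix acquires strictly positive diagonal entries $W^{(l+1)}_{vv}=W_{vI'}([H_\beta]_{I'I'})^{-1}W_{I'v}$, so $[H_\beta^{W^{(l+1)}}]_{II}$ is diagonal with entries $2\beta_v-W^{(l+1)}_{vv}$. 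The conditional law you quote (reciprocal inverse Gaussian with mean parameter $(W^{(l+1)}_{e_1}+W^{(l+1)}_{e_2})^{-1}$) is that of the corrected variable, so your formula is consistent only under that reading; with the uncorrected $\beta_v$ it is false whenever $l+1<r$.
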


One could weaken the assumption of the recurrence statement by making only
an independence assumption rather than an i.i.d.\ assumption. To increase
readability of the proof, we only treat the stronger assumption.

\paragraph{Consequences for linearly edge-reinforced random walk.}
Linearly edge-reinfor\-ced random walk (errw) $(X_n)_{n\in\N_0}$
on $G$ starting at $\rho$ with constant initial weights $a>0$
is defined as follows. Let $X_0=\rho$ and let $w_0(e):=a$, $e\in E$, be the initial edge
weights. In each time step, the random walker jumps to a neighboring vertex
with probability proportional to the weight of the traversed edge.
Each time an edge is traversed, its weight is increased by 1. 
More formally, conditioned on $(X_m)_{m\le n}$ and
$X_n=i\in\Lambda$, the conditional probability of $X_{n+1}=j\in\Lambda$
is non-zero only if $\{i,j\}\in E$. It equals 
$w_n(\{i,j\})/\sum_{k\in\Lambda:\{i,k\}\in E}w_n(\{i,k\})$,
where $w_n(e)=a+\sum_{m=0}^{n-1}1_{\{\{ X_m,X_{m+1}\}=e\}}$ denotes the
weight of the edge $e$ at time $n$. 
Errw was introduced by Diaconis in 1986 in \cite{Coppersmith-Diaconis-unpublished},
see \cite{Diaconis88}. For more history on this process, see
\cite{merkl-rolles-festschrift-2006}. 
It was shown in \cite[Theorem 1]{sabot-tarres2012}, that errw
is a mixture of the discrete-time process associated to vrjp with i.i.d.\
Gamma($a$,1)-distributed weights $W_e$, $e\in E$. 
The following result shows that under rather general conditions, 
errw on a subdivided graph $G_r$ with constant initial weights, appropriately
restricted, becomes
a mixture of \emph{positive recurrent} Markov chains as soon as $r$ is
large enough. More precisely, we prove the following statement.

\begin{theorem}[Errw on subdivided graphs]
  \label{thm:Errw-on-subdivided-graphs}
  Let $G=(\Lambda,E)$ be a connected undirected graph without
  self-loops and with vertex degree
  bounded by $d$. For all $r\in\N_0$ consider the subdivided graph $G_r$. 
  Let $X$ be errw on $\Lambda_r$  with starting point $\rho\in\Lambda$
  and with constant initial weights $a>0$. 
  Assume that $\alpha\in\left(0,\tfrac14\right]$, $r\in\N_0$, and 
  $l\in\{0,\ldots,r\}$ satisfy
  $\Gamma(a+\alpha)/\Gamma(a)\le\cdrei(d,\alpha)2^{\alpha(r-l)}$
  with $\cdrei$ as in Fact \ref{fact:sabot-tarres-recurrence}. Then, 
  the restriction $X^{\Lambda_l\neq}$ of errw to $\Lambda_l$ with self-loops
  removed is a mixture of
  positive recurrent Markov chains.
\end{theorem}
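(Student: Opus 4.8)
The plan is to reduce Theorem~\ref{thm:Errw-on-subdivided-graphs} to Theorem~\ref{thm:vrjp-on-G-r-pos-rec} by passing through the known representation of errw as a mixture of vrjp. First I would invoke \cite[Theorem 1]{sabot-tarres2012}: errw on $\Lambda_r$ with constant initial weights $a>0$ has the same law as the discrete-time process associated to vrjp on $G_r$ in exchangeable time scale with i.i.d.\ $\mathrm{Gamma}(a,1)$-distributed random weights $W_e$, $e\in E_r$. Since the restriction operations $X\mapsto X^{\Lambda_l}$ and $X\mapsto X^{\Lambda_l\neq}$ in Definition~\ref{def:restriction-to-a-subset} are measurable functionals of the $X$-component alone and do not depend on the $T$-component, this identity in law is preserved under restriction: the law of $X^{\Lambda_l\neq}$ for errw equals the law of $X^{\Lambda_l\neq}$ for the corresponding mixture of vrjps.

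Next I would check the quantitative hypothesis needed to apply the recurrence part of Theorem~\ref{thm:vrjp-on-G-r-pos-rec}. That theorem requires i.i.d.\ weights $W_e$ on $E_r$ with $\E[W_e^\alpha]\le\cdrei(d,\alpha)2^{\alpha(r-l)}$ for some $\alpha\in(0,\tfrac14]$. For $W_e\sim\mathrm{Gamma}(a,1)$ the $\alpha$-th moment is the standard Gamma integral
\begin{align}
  \E[W_e^\alpha]=\frac{1}{\Gamma(a)}\int_0^\infty w^{a+\alpha-1}e^{-w}\,dw=\frac{\Gamma(a+\alpha)}{\Gamma(a)}. \nonumber
\end{align}
Hence the assumed inequality $\Gamma(a+\alpha)/\Gamma(a)\le\cdrei(d,\alpha)2^{\alpha(r-l)}$ is exactly the moment bound required by Theorem~\ref{thm:vrjp-on-G-r-pos-rec}. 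The i.i.d.\ condition is satisfied because the Gamma weights are i.i.d.\ by construction, and $G$ has vertex degree bounded by $d$, which controls the degree of $G_r$ as needed.

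With both hypotheses verified, Theorem~\ref{thm:vrjp-on-G-r-pos-rec} yields that the restriction $X^{\Lambda_l\neq}$ of the vrjp is a mixture of positive recurrent reversible Markov chains. Transporting this conclusion back along the law-identity from the first step gives that the restriction $X^{\Lambda_l\neq}$ of errw is a mixture of positive recurrent Markov chains, which is the assertion.

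The argument is short, so the only genuine obstacle is a bookkeeping one: I must ensure that the errw-to-vrjp correspondence of \cite{sabot-tarres2012} is stated at the level of the encoded discrete process $(X_n)_{n\in\N_0}$ (the jump chain) in a way compatible with the functional $X\mapsto X^{\Lambda_l\neq}$, and that passing to the discrete-time process and then restricting commute. Since by the remark following Definition~\ref{def:restriction-to-a-subset} the definitions of $X^{\neq}$ and $X^{J}$ make sense for a discrete-time process on its own and use only the $X$-component, this compatibility holds and the composition of restrictions is well defined; I expect this to be the main point to state carefully rather than a real difficulty.
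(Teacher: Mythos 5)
Your proposal is correct and is essentially identical to the paper's proof: invoke \cite[Theorem 1]{sabot-tarres2012} to represent errw on $G_r$ as a mixture of discrete vrjps with i.i.d.\ Gamma($a$,1) weights, compute $\E[W_e^\alpha]=\Gamma(a+\alpha)/\Gamma(a)$, and apply Theorem \ref{thm:vrjp-on-G-r-pos-rec}. Your extra bookkeeping remark that the restriction functional depends only on the $X$-component (so it commutes with the mixture representation) is a valid point the paper leaves implicit.
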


In this paper, we treat $2^r$-subdivisions with powers of $2$ only rather than
arbitrary $k$-subdivisions for $k\in\N$. This is just for notational
simplicity. General $k$-subdivisions could be treated by the same method, 
applying the recursive restriction described in Lemma \ref{le:aux-W-l-1}, below,
not to all edges simultaneously in every recursion step, but
only to some of the edges. 

\paragraph{Comparison with previous work on errw.}
\cite[Theorem 1.1]{merkl-rolles-2d} provides a variant of Theorem
\ref{thm:Errw-on-subdivided-graphs} in 
the special case of the graph $\Z^2$ with nearest-neighbor edges, 
showing only recurrence rather than a mixture of positive recurrent Markov chains.
Note that at that time, the
relation between errw and vrjp, which is an essential tool in the proof
of  Theorem \ref{thm:Errw-on-subdivided-graphs}, was not known.
The present paper gives a heuristic explanation why subdivisions
make errw and vrjp more recurrent: the reason is that taking subdivisions
and then restricting to the original graph decreases the effective weights
in a stochastic sense. This is made precise in the next lemma.
By a monotonicity result of Poudevigne \cite[Theorem 1]{poudevigne22} decreasing
the weights of vrjp increases the probability of vrjp being recurrent.

\begin{theorem}[Decay of the effective weights by restriction]
  \label{thm:from-r-to-l}
  Let the graph $G$ be finite and take $0\le l\le r$. We endow the subdivided graph $G_r$
  with i.i.d.\ edge weights $W_e>0$, $e\in E_r$. Consider the family $W^{(l)}$ of
  corresponding random weights
from Theorem \ref{thm:vrjp-on-G-r-pos-rec}, all realized on the same probability space
with expectation operator $\E$. We abbreviate
$C_\alpha:=2^{-\alpha}\Gamma(\tfrac12-\alpha)/\sqrt{\pi}$
for $0\le\alpha<\frac12$ and $\cnull:=\gamma+\log 2=1.27036\ldots$, where 
\begin{align}
  \label{eq:euler-mascheroni}
  \gamma=-\int_0^\infty e^{-t}\log t\, dt=0.57721\ldots
\end{align}
denotes the Euler Mascheroni constant. 
For all $\bar e\in E_l$ and $e'\in E_r$, one has
  \begin{align}
    \label{eq:phase1-alpha}
    \E[(W^{(l)}_{\bar e})^\alpha]
    &\le (2^{-\alpha})^{r-l}\E[W_{e'}^\alpha]
    &\hspace{-1cm}\text{ for } \alpha\in[0,1],\\
    \label{eq:combined-alpha}
    \E[(W^{(l)}_{\bar e})^\alpha]
     &\le \frac{1}{C_\alpha}\min_{m\in\{l,\ldots,r\}}\left(C_\alpha 
       (2^{-\alpha})^{r-m}\E[W_{e'}^\alpha]\right)^{2^{m-l}}
     &\hspace{-1cm}\text{ for } \alpha\in[0,\tfrac12),
    \\
      \label{eq:combined-log}
    \E[\log W^{(l)}_{\bar e}]
    &\le \min_{m\in\{l,\ldots,r\}}
      2^{m-l}\left(\E[\log W_{e'}]-(r-m)\log 2+\cnull\right)-\cnull.
  \end{align}
  Set $m_0:=r-2-\lfloor\alpha^{-1}\log_2(C_\alpha \E[W_{e'}^\alpha])\rfloor$
  and $m_1:=r-2-\lfloor(\log 2)^{-1}(\E[\log W_{e'}]+\cnull)\rfloor$.
  If $m_0\in\{l,\ldots,r\}$, a minimizer in \eqref{eq:combined-alpha}
  is given by $m=m_0$. If $m_0<l$ or $m_0>r$, it is given by
  $m=l$ or $m=r$, respectively. The analogous statement holds for $m_1$
  with \eqref{eq:combined-alpha} replaced by \eqref{eq:combined-log}. 
\end{theorem}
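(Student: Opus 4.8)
The plan is to reduce everything to a single restriction step, from the $2^m$-subdivision down to the $2^{m-1}$-subdivision, and then to iterate. By Theorem~\ref{thm:vrjp-on-G-r-pos-rec} the family $W^{(m)}=(W^{(m)}_e)_{e\in E_m}$ is i.i.d.\ for every $m\in\{l,\dots,r\}$, and by the recursion of Lemma~\ref{le:aux-W-l-1} each edge $\bar e\in E_{m-1}$ arises from exactly two child edges $e_1,e_2\in E_m$ meeting at the removed degree-two midpoint $v$, with
\begin{align*}
  W^{(m-1)}_{\bar e}=\frac{W^{(m)}_{e_1}\,W^{(m)}_{e_2}}{2\beta_v}.
\end{align*}
Writing $W_1=W^{(m)}_{e_1}$, $W_2=W^{(m)}_{e_2}$, the two facts I would extract from the explicit conditional law of $\beta_v$ (the field value at $v$ enters $H_\beta$ only through the single diagonal entry $[H_\beta]_{vv}=2\beta_v$, and the removed midpoints form an independent set, so $[H_\beta]_{II}$ is diagonal and the restriction decouples edge by edge) are: (a) the stochastic minorization $2\beta_v\ge t_v$, where $t_v\sim\chi^2_1$ is independent of $(W_1,W_2)$; and (b) the conditional-mean bound $\E[(2\beta_v)^{-1}\mid W^{(m)}]\le (W_1+W_2)^{-1}$. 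Throughout set $M_m(\alpha):=\E[(W^{(m)}_e)^\alpha]$ and $a_m:=\E[\log W^{(m)}_e]$, which do not depend on the chosen edge by the i.i.d.\ property.

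From these two facts I obtain two one-step estimates. Using concavity of $x\mapsto x^\alpha$ for $\alpha\le 1$ together with (b), and then $W_1+W_2\ge 2\sqrt{W_1W_2}$,
\begin{align*}
  \E[(W^{(m-1)}_{\bar e})^\alpha\mid W^{(m)}]
  \le\Big(\tfrac{W_1W_2}{W_1+W_2}\Big)^\alpha
  \le 2^{-\alpha}(W_1W_2)^{\alpha/2},
\end{align*}
so that $M_{m-1}(\alpha)\le 2^{-\alpha}M_m(\alpha/2)^2\le 2^{-\alpha}M_m(\alpha)$ for $\alpha\in[0,1]$, the last step being Jensen and using independence of $W_1,W_2$. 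Using (a) and the independence of $t_v$ from $(W_1,W_2)$,
\begin{align*}
  M_{m-1}(\alpha)\le\E\Big[\Big(\tfrac{W_1W_2}{t_v}\Big)^\alpha\Big]
  =M_m(\alpha)^2\,\E[t_v^{-\alpha}]=C_\alpha\,M_m(\alpha)^2
\end{align*}
for $\alpha\in[0,\tfrac12)$, since $\E[(\chi^2_1)^{-\alpha}]=2^{-\alpha}\Gamma(\tfrac12-\alpha)/\sqrt{\pi}=C_\alpha$. Iterating the first estimate $r-l$ times gives \eqref{eq:phase1-alpha}. For \eqref{eq:combined-alpha} I would apply the linear estimate from level $r$ down to level $m$, giving $M_m(\alpha)\le(2^{-\alpha})^{r-m}\E[W_{e'}^\alpha]$, and then the squaring estimate from $m$ down to $l$; writing $P_k:=C_\alpha M_k(\alpha)$ the squaring step reads $P_{k-1}\le P_k^2$, hence $P_l\le P_m^{2^{m-l}}$, which is exactly the $m$-th term of the minimum in \eqref{eq:combined-alpha}.

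The logarithmic bound is handled in parallel. Taking logarithms in the one-step representation, $a_{m-1}=2a_m-\E[\log(2\beta_v)]$. From (b), concavity of $\log$, and $W_1+W_2\ge 2\sqrt{W_1W_2}$, I get the linear-log step $a_{m-1}\le a_m-\log2$; from (a) and $\E[\log\chi^2_1]=\psi(\tfrac12)+\log2=-\gamma-\log2=-\cnull$ I get the squaring-log step $a_{m-1}\le 2a_m+\cnull$. Setting $q_k:=a_k+\cnull$, the squaring-log step reads $q_{k-1}\le 2q_k$; combining the linear-log step from $r$ to $m$ with the squaring-log step from $m$ to $l$ yields the $m$-th term of \eqref{eq:combined-log}. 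The identification of the minimizers $m_0,m_1$ is then a routine discrete optimization of the explicit $m$-dependent expressions (differentiate the continuous extension in $m$, then round to the stated integers).

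The main obstacle is establishing the two distributional inputs (a) and (b), i.e.\ pinning down the conditional law of the midpoint field value $\beta_v$ given the adjacent effective weights. Fact~(a) amounts to writing $2\beta_v$ as a nonnegative (Green-function-dependent) shift of a $\chi^2_1$ variable and verifying that the $\chi^2_1$ part is genuinely independent of $(W_1,W_2)$; fact~(b) is a Ward-type identity for $\nu$ whose only delicate point is the controllability of the boundary terms at the locus where $H_\beta$ degenerates. Both should follow from the explicit conditional density provided by Lemma~\ref{le:aux-W-l-1} and the diagonality of $[H_\beta]_{II}$ noted above; note also that it is precisely the divergence of $C_\alpha$ at $\alpha=\tfrac12$ (where $\E[(\chi^2_1)^{-\alpha}]=\infty$) that forces the separate, concavity-based treatment of \eqref{eq:phase1-alpha} on the full range $\alpha\in[0,1]$.
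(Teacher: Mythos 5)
Your proposal is correct and follows the same architecture as the paper's proof: the one-step recursion $W^{(m-1)}_{\bar e}=W^{(m)}_{e_1}W^{(m)}_{e_2}/(2\beta_v^{(m)\neq})$ from Lemma \ref{le:aux-W-l-1}, two one-step moment bounds (a linear one via the conditional mean plus AM--GM, and a squaring one via a uniform-in-$W$ bound on negative moments of $2\beta_v$), and then the two-phase iteration ``linear from $r$ to $m$, squaring from $m$ to $l$'' followed by comparison of consecutive terms to identify $m_0,m_1$. Your estimates \eqref{eq:first-bound-alpha}-type steps, the constants, and $\E[(\chi^2_1)^{-\alpha}]=C_\alpha$, $\E[\log\chi^2_1]=-\cnull$ all check out.

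The one genuine difference is how the single-site inputs are established. The paper takes the conditional law $(2\beta_v^{(m)\neq})^{-1}\sim\ig((W_1+W_2)^{-1},1)$ from Lemma \ref{le:aux-W-l-1}(iv) (so your (b) holds with equality, no Ward identity or boundary-term analysis needed), and proves $\E[(2\beta_v)^{-\alpha}\mid W]\le C_\alpha$ and $\E[-\log(2\beta_v)\mid W]\le\cnull$ by showing $W\mapsto\E[X_W^\alpha]$ is monotone and computing the $W\downarrow0$ limit, resp.\ by Bessel-function identities (Lemmas \ref{le:bound-exp-X-alpha} and \ref{le:estimate-log-IG}). You instead invoke a stochastic minorization $2\beta_v\ge t_v$ with $t_v\sim\chi^2_1$ independent of the weights, which cleanly yields both bounds at once. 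That claim is true --- the conditional density of $2\beta_v$ given $W:=W_1+W_2$ is the $\chi^2_1$ density times $e^{W-W^2/(2z)}$, an increasing likelihood ratio in $z$, so a quantile coupling gives the domination with $t_v$ independent of $W$; equivalently one can use the GIG convolution identity writing the reciprocal inverse Gaussian as an independent sum of an $\ig(W,W^2)$ variable and a $\chi^2_1$ variable --- but this is exactly the step you leave as ``the main obstacle,'' and your sketch (``a nonnegative shift of a $\chi^2_1$'') would need one of these two arguments spelled out to be complete. Similarly, ``differentiate the continuous extension and round'' for the minimizer is looser than the paper's direct equivalence $X_m^{2^{m-l}}<X_{m-1}^{2^{m-1-l}}\Leftrightarrow m\le m_0$, which also establishes the needed unimodality; but this is routine, as you say. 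Net effect: your route buys a unified and arguably slicker treatment of the $\alpha$-moment and log-moment bounds at the cost of one unproven (though true and classical) domination lemma, while the paper's direct inverse-Gaussian computations are longer but fully self-contained.
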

Note that for $\alpha\in[0,\frac12)$, the bound \eqref{eq:combined-alpha}
implies the bound \eqref{eq:phase1-alpha}, using $m=l$. 
The proof of Theorem \ref{thm:from-r-to-l} is done in
Section \ref{sec:proofs-for-subdivided-graphs} and given by induction.
The induction step is based on the following lemma.

\begin{lemma}[Induction step for moments of effective weights]
  \label{le:bounds-W-l-one-step}
  \mbox{}\\
  Consider the setup of Theorem \ref{thm:from-r-to-l}. 
  For $l\in\{1,\ldots,r\}$, $\bar e\in E_{l-1}$, and $e'\in E_l$, 
  we have
  \begin{align}
    \label{eq:bound-exp-w-l-alpha}
    &\E[(W^{(l-1)}_{\bar e})^\alpha]\le 2^{-\alpha}\E[(W^{(l)}_{e'})^\alpha]
      \quad\text{for }\alpha\in[0,1],\\
    \label{eq:bound2-exp-w-l-alpha}
&\E[(W^{(l-1)}_{\bar e})^\alpha]\le C_\alpha \E[(W^{(l)}_{e'})^\alpha]^2
      \quad\text{for }\alpha\in[0,\tfrac12),\\
    \label{eq:bound2-exp-w-l-log}
    &\E[\log W^{(l-1)}_{\bar e}]\le\min\{\E[\log W^{(l)}_{e'}]-\log 2,2\E[\log W^{(l)}_{e'}]+\cnull\}
  \end{align}
  with the constants $C_\alpha$ and $\cnull$ from Theorem \ref{thm:from-r-to-l}.
  For $\alpha\in[0,\frac12)$,
  the bound  in \eqref{eq:bound-exp-w-l-alpha} is stronger than
  the bound in \eqref{eq:bound2-exp-w-l-alpha} if and only if
  $\E[(W^{(l)}_{e'})^\alpha]>2^{-\alpha}C_\alpha^{-1}$.
  Similarly, the minimum in bound \eqref{eq:bound2-exp-w-l-log} equals
  $\E[\log W^{(l)}_{e'}]-\log 2$ if and only if
  $\E[\log W^{(l)}_{e'}]\ge -\log 2-\cnull$.
  Note that the expectations do not depend on the choice of $\bar e$ and
  $e'$. 
\end{lemma}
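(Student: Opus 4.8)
The plan is to reduce the lemma to the single-step restriction from level $l$ to level $l-1$ and then to a handful of moment estimates for one explicit random variable. Each $\bar e=\{u,v\}\in E_{l-1}$ is, inside $G_l$, a two-edge path $u-m-v$ whose midpoint $m\in\Lambda_l\setminus\Lambda_{l-1}$ has degree two with both neighbours in $\Lambda_{l-1}$; the midpoints therefore form an independent set, so the block $[H_\beta]_{II}$ for $I=\Lambda_l\setminus\Lambda_{l-1}$ is diagonal with entries $2\beta_m$. Applying Theorem \ref{thm:mixture-of-vrjps} at level $l$ with $J=\Lambda_{l-1}$ and reading off the $(u,v)$-entry of \eqref{def:W-J} (the diagonal term vanishes since $u,v$ are non-adjacent in $G_l$) gives the one-step recursion
\[
W^{(l-1)}_{\bar e}=W_1W_2\,V,\qquad V:=\frac{1}{2\beta_m},\quad W_1:=W^{(l)}_{\{u,m\}},\ W_2:=W^{(l)}_{\{m,v\}},
\]
and by the independence propagation in Theorem \ref{thm:vrjp-on-G-r-pos-rec}, $W_1$ and $W_2$ are independent copies of $W^{(l)}_{e'}$.

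Next I would pin down the conditional law of $\beta_m$. Wiring all of $\Lambda_{l-1}$ to $\rho$ turns each midpoint into a leaf joined to $\rho$ by weight $s:=W_1+W_2$, so $\widehat W$ is a star, $\nu^{\widehat W}$ factorizes over midpoints, and integrating $\beta_\rho$ out of the arrowhead determinant shows that, given $(W_1,W_2)$, the density of $u:=2\beta_m$ is proportional to $u^{-1/2}\exp(-\tfrac12(u+s^2/u))$ on $(0,\infty)$; equivalently $V\sim\ig(1/s,1)$ with mean $\E[V\mid s]=1/s$, so that $sV\sim\ig(1,s)$ has mean $1$. From here I need two families of estimates, uniform in $s$. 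The concavity of $t\mapsto t^\alpha$ ($\alpha\le1$) and of $\log$, applied to $sV$, give $\E[(sV)^\alpha\mid s]\le1$ and $\E[\log(sV)\mid s]\le0$. For the second family I would compare $u$ with a $\chi^2_1$-variable $R$: the likelihood ratio of the two densities is proportional to $\exp(-s^2/(2u))$, increasing in $u$, hence $u\succeq_{\mathrm{st}}R$ and $V\preceq_{\mathrm{st}}1/R$; monotonicity then yields $\E[V^\alpha\mid s]\le\E[R^{-\alpha}]=C_\alpha$ for $\alpha<\tfrac12$ and $\E[\log V\mid s]\le-\E[\log R]=\cnull$, using $\E[R^{-\alpha}]=2^{-\alpha}\Gamma(\tfrac12-\alpha)/\sqrt\pi$ and $\E[\log R]=\psi(\tfrac12)+\log2=-\cnull$.

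Finally I would assemble the three bounds by conditioning on $(W_1,W_2)$ and using the two factorizations $W^{(l-1)}_{\bar e}=W_1W_2\,V=\tfrac{W_1W_2}{W_1+W_2}\,(sV)$. For \eqref{eq:bound2-exp-w-l-alpha} the domination bound gives $\E[(W^{(l-1)}_{\bar e})^\alpha]\le C_\alpha\E[(W_1W_2)^\alpha]=C_\alpha\E[(W^{(l)}_{e'})^\alpha]^2$. For \eqref{eq:bound-exp-w-l-alpha} the concavity bound gives $\E[(W^{(l-1)}_{\bar e})^\alpha]\le\E[(\tfrac{W_1W_2}{W_1+W_2})^\alpha]$, and then $W_1+W_2\ge2\sqrt{W_1W_2}$ together with the Jensen step $\E[W^{\alpha/2}]^2\le\E[W^\alpha]$ produces the bound $2^{-\alpha}\E[(W^{(l)}_{e'})^\alpha]$. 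For \eqref{eq:bound2-exp-w-l-log} the same two factorizations give, respectively, $\E[\log W^{(l-1)}_{\bar e}]\le\E[\log\tfrac{W_1W_2}{W_1+W_2}]\le\E[\log W^{(l)}_{e'}]-\log2$ and $\E[\log W^{(l-1)}_{\bar e}]=2\E[\log W^{(l)}_{e'}]+\E[\log V]\le2\E[\log W^{(l)}_{e'}]+\cnull$, whose minimum is the claim; the stated thresholds follow by comparing $2^{-\alpha}\E[W^\alpha]$ with $C_\alpha\E[W^\alpha]^2$ and $\E[\log W]-\log2$ with $2\E[\log W]+\cnull$. The main obstacle is the $s$-uniform control of $\E[V^\alpha\mid s]$ and $\E[\log V\mid s]$, since the exact inverse-Gaussian moments are Bessel-function expressions in $s$; the monotone-likelihood-ratio domination by $\chi^2_1$ is what sidesteps this, collapsing every such moment to its value at $s=0$.
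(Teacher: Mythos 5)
Your proposal is correct, and its overall skeleton coincides with the paper's: the one-step recursion $W^{(l-1)}_{\bar e}=W_1W_2\,V$ with $V=(2\beta^{(l)\neq}_m)^{-1}\sim\ig((W_1+W_2)^{-1},1)$ conditionally on $W^{(l)}$ (this is Lemma \ref{le:aux-W-l-1}, items \ref{item1} and \ref{item3}), Jensen applied to the mean-one variable $(W_1+W_2)V$, the AM--GM step $\tfrac{W_1W_2}{W_1+W_2}\le\tfrac12\sqrt{W_1W_2}$, Cauchy--Schwarz, and independence are all exactly the paper's moves in \eqref{eq:first-bound-alpha}--\eqref{eq:second-bound-alpha}. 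One small notational slip: the diagonal of $[H_\beta]_{II}$ at a midpoint $m\in\Lambda_l\setminus\Lambda_{l-1}$ is $2\beta_m-W^{(l)}_{mm}=2\beta^{(l)\neq}_m$ rather than $2\beta_m$, since the restriction from $\Lambda_r$ down to $\Lambda_l$ generically produces positive self-loop weights at $m$; your argument is unaffected because the conditional law of the correctly centered quantity is the same inverse Gaussian. Where you genuinely depart from the paper is in the $s$-uniform bounds $\E[V^\alpha\mid s]\le C_\alpha$ and $\E[\log V\mid s]\le\cnull$. The paper proves these as separate appendix lemmas: for the $\alpha$-moment it differentiates $W\mapsto E[X_W^\alpha]$ under the integral and uses a $y\mapsto 1/y$ symmetrization to show monotonicity, then evaluates the $W\downarrow 0$ limit; for the logarithmic moment it invokes modified Bessel functions $K_p$, a derivative-in-order identity, and an exponential-integral bound. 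Your monotone-likelihood-ratio comparison --- the density of $2\beta^{(l)\neq}_m$ is the $\chi^2_1$ density tilted by the increasing factor $e^{-s^2/(2u)}$, hence $2\beta^{(l)\neq}_m\succeq_{\mathrm{st}}R\sim\chi^2_1$ and $V\preceq_{\mathrm{st}}1/R$ --- delivers both bounds in one stroke and identifies $C_\alpha=\E[R^{-\alpha}]$ and $\cnull=-\E[\log R]=-\psi(\tfrac12)-\log 2$ transparently as the $s=0$ values; this is shorter and avoids special functions entirely, at the price of losing the exact formula \eqref{eq:exp-log-X} and the lower bounds in \eqref{eq:upper-bound-log-min}--\eqref{eq:bound-log-X-small-W}, which the paper records but does not need for this lemma. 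The threshold comparisons at the end are routine and match the paper.
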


\paragraph{Discussion.}
The iteration of the bound \eqref{eq:bound-exp-w-l-alpha} gives
an exponentially decreasing upper bound for $\E[(W^{(l)}_{\bar e})^\alpha]$
as a function of $r-l$, while iteration of the other bound
\eqref{eq:bound2-exp-w-l-alpha} is only useful for small values of
$\E[(W^{(l)}_{\bar e})^\alpha]$, but then gives a doubly exponentially fast
decreasing bound. Thus, for $0<\alpha<\frac12$, there is a change of regimes in these upper
bounds for $\E[(W^{(l)}_{\bar e})^\alpha]$, consisting of exponential
decay for the first iteration steps with
a transition to doubly exponential decay for later steps. 
One may speculate that there might be a change of regimes 
for the decay of $\E[(W^{(l)}_{\bar e})^\alpha]$ as well, not just for the upper bounds.

\subsection{Non-linear hyperbolic supersymmetric sigma model}
\label{subsec:Non-linear-hyperbolic-supersymmetric-sigma-model}

Let $\Lambda$ be a finite set containing a pinning point $\rho$ and consider
interactions $W=(W_{ij})_{i,j\in\Lambda}$, 
$W_{ij}=W_{ji}\ge0$, such that the graph $(\Lambda,E_+)$ with edge set
$E_+:=\{\{i,j\}\subseteq\Lambda:W_{ij}>0\}$ is connected.

The non-linear hyperbolic supersymmetric sigma model, $\htwo$-model for
short, is a statistical mechanics type model involving spin variables
taking values in a supermanifold called $\htwo$. The spin variables
have three even (= commuting) components $x,y,z$ and two odd (= anticommuting)
components $\xi,\eta$ in a real Grassmann algebra $\cA=\cA_0\oplus\cA_1$
with $\R\subseteq\cA_0$.
Here, $\cA_0\ni x,y,z$ denotes the even subalgebra and $\cA_1\ni\xi,\eta$
the odd subspace. More details can be found in
\cite{disertori-spencer-zirnbauer2010}, \cite{phdthesis-swan2020}, and
\cite[Appendix]{disertori-merkl-rolles2020}.
To every vertex $i\in\Lambda$ linearly independently, we associate a spin variable
$\sigma_i=(x_i,y_i,z_i,\xi_i,\eta_i)$ subject to the constraint
\begin{align}
  \sigma_i\in\htwo:=
  &\{(x,y,z,\xi,\eta)\in\cA_0^3\times\cA_1^2:\, x^2+y^2-z^2+2\xi\eta=-1,\body(z)>0\}.
\end{align}
Here, $\body(z)\in\R$ is the unique real number such that $z-\body(z)$ is
nilpotent. We endow $\cA_0^3\times\cA_1^2$ with the inner product
\begin{align}
\label{eq:def-inner-product}
\sk{\sigma,\sigma'}:=xx'+yy'-zz'+\xi\eta'-\eta\xi'
\end{align}
for $\sigma=(x,y,z,\xi,\eta),\sigma'=(x',y',z',\xi',\eta')$.
For any smooth function $f:\R^k\to\R$ there is an extension to a superfunction 
$f:\cA_0^k\to\cA_0$ constructed by a Taylor expansion in the nilpotent parts;
it is denoted by the same symbol. The same holds if $f$ is defined only on 
an open subset $U$ of $\R^k$, but then the extension is only defined on 
the subset of $\cA_0^k$ with bodies in $U$. 
In particular, on $\htwo$ the component $z$ is not an independent variable,
but just an abbreviation $z=\sqrt{1+x^2+y^2+2\xi\eta}$. 
In the $\htwo$-model, the pinning point $\rho\in\Lambda$ gets constant
spin $\sigma_\rho=o:=(0,0,1,0,0)\in H^{2|2}$ assigned to it. 
The superintegration form $\cD \sigma$ on $H^{2|2}$ is 
defined by 
\begin{align}
f\mapsto\int_{H^{2|2}}\cD \sigma f(\sigma):=
\frac{1}{2\pi}\int_{\R^2}dx\, dy\,
\partial_\xi\partial_\eta\left(\frac{1}{z}f(x,y,z,\xi,\eta)\right)
\end{align}
for any superfunction $f$ 
decaying sufficiently fast to make the integral well-defined. 
The 
$\htwo$-model $\Lambda$ is given by
\begin{align}
  \label{model:sigma-Lambda2}
  \mu^W_\Lambda(\sigma_\Lambda):=\mu^W_{\Lambda,\rho}(\sigma_\Lambda):=\delta_o(d\sigma_\rho)
  \cD\sigma_{\Lambda\setminus\{\rho\}}\,
  \exp\left(\frac12\sum_{i,j\in\Lambda}W_{ij}(1+\sk{\sigma_i,\sigma_j})
  \right).
\end{align}
Here, $\delta_o$ denotes the Dirac measure in $o$. 
Note that $\mu^W_{\Lambda,\rho}$ depends on the choice of $\rho$ due to the
constraint $\sigma_\rho=o$, while the law $\nu^W_\Lambda$ of the
$\beta$-field does not. 

The following result shows that the restriction of the $\htwo$ model
is a mixture of $\htwo$ models. 

\begin{theorem}[Effective weights for restrictions to subsets]
  \label{thm:mixture-of-htwo}
  Let $\Lambda=I\cup J$, $I\cap J=\emptyset$, with $|J|\ge 2$ and $\rho\in J$.
  Using the weights $W^J(\beta_I)$ defined in \eqref{def:W-J} and $\widehat W$ obtained
  from $W$ by wiring all points in $J$ at $\rho$, cf.\ \eqref{eq:def-W}, one has 
  \begin{align}
    \int_{(\htwo)^\Lambda}\mu_\Lambda^W(\sigma_\Lambda)f(\sigma_J)=
    &
      \int_{\R^\Lambda}\nu_\Lambda^W(d\beta)\int_{(\htwo)^J}\mu_J^{W^J(\beta_I)}(\sigma_J)f(\sigma_J)\nonumber\\
    =& \int_{\R^{I\cup\{\rho\}}}\nu_{I\cup\{\rho\}}^{\widehat W}(d\beta)\int_{(\htwo)^J}\mu_J^{W^J(\beta_I)}(\sigma_J)f(\sigma_J)
         \label{eq:mixture-of-h22}
    \end{align}
    for any superfunction $f$ on $(\htwo)^J$ which is compactly supported or decays
    at least sufficiently fast so that the left-hand side of
    \eqref{eq:mixture-of-h22} is well-defined. 
  \end{theorem}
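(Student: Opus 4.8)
The plan is to integrate out the spins $\sigma_I$ first, for fixed $\sigma_J$, and to recognize the resulting $\sigma_I$-partition function as a $\beta_I$-mixture that reconstitutes the effective model $\mu_J^{W^J(\beta_I)}$. Splitting the action in \eqref{model:sigma-Lambda2} into an $I$--$I$ part, an $I$--$J$ coupling $\sum_{i\in I,j\in J}W_{ij}(1+\sk{\sigma_i,\sigma_j})$, and a $J$--$J$ part, and using that $f$ depends only on $\sigma_J$ together with $\rho\in J$, I would factor the left-hand side of \eqref{eq:mixture-of-h22} as
\begin{align*}
\int_{(\htwo)^J}\delta_o(d\sigma_\rho)\,\cD\sigma_{J\setminus\{\rho\}}\, e^{\frac12\sum_{i,j\in J}W_{ij}(1+\sk{\sigma_i,\sigma_j})}\, Z_I(\sigma_J)\, f(\sigma_J),
\end{align*}
where $Z_I(\sigma_J):=\int_{(\htwo)^I}\cD\sigma_I\, \exp\bigl(\tfrac12\sum_{i,j\in I}W_{ij}(1+\sk{\sigma_i,\sigma_j})+\sum_{i\in I,j\in J}W_{ij}(1+\sk{\sigma_i,\sigma_j})\bigr)$ is the $\htwo$-partition function on $I$ in the external field built from $\sigma_J$. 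The whole content of the theorem is then to show that
\begin{align*}
Z_I(\sigma_J)=\int\nu^{\,\mathrm{rad}}(d\beta_I)\,\exp\Bigl(\tfrac12\sum_{i,j\in J}\bigl(W^J_{ij}(\beta_I)-W_{ij}\bigr)(1+\sk{\sigma_i,\sigma_j})\Bigr)
\end{align*}
for the appropriate radial measure on $\beta_I$; multiplying by the $J$--$J$ exponential then turns $W_{JJ}$ into $W^J(\beta_I)$ and exhibits $\mu_J^{W^J(\beta_I)}$ under the integral.

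To evaluate $Z_I(\sigma_J)$ I would use the horospherical-coordinate representation of the $\htwo$ measure on $I$, as in \cite{disertori-spencer-zirnbauer2010} and \cite{sabot-tarres-zeng2017}: integrating out the Grassmann variables $\psi_i,\bar\psi_i$ and the non-radial even variables $s_i$ on $I$ is a supersymmetric Gaussian computation in which the radial variables $t_i=\log(z_i+x_i)$ pass over to a $\beta_I$-field distributed according to $\nu$ (so that $\beta_I$ plays exactly the role of \eqref{eq:def-nu}), while the fluctuations on $I$ are integrated out with propagator $([H_\beta]_{II})^{-1}$, $[H_\beta]_{II}=2\diag(\beta_I)-W_{II}$, which is positive definite on the support of $\nu$ since it is a principal submatrix of $H_\beta>0$. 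The coupling of $\sigma_I$ to the fixed external field produced by $\sigma_J$ is linear in the $I$-fluctuations, so completing the square contracts the $I$--$J$ weights through this propagator and replaces the $J$--$J$ interaction $W_{JJ}$ by $W_{JJ}+W_{JI}([H_\beta]_{II})^{-1}W_{IJ}=W^J(\beta_I)$ of \eqref{def:W-J}. The underlying algebraic fact is the Schur-complement identity: the Schur complement of the block $[H_\beta]_{II}$ in $H_\beta=2\diag(\beta)-W$ equals $2\diag(\beta_J)-W^J(\beta_I)$, i.e.\ precisely $H_{\beta_J}^{W^J(\beta_I)}$.

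Assembling these pieces gives the first equality in \eqref{eq:mixture-of-h22}, with the full field $\beta_\Lambda$ integrated against $\nu^W_\Lambda$ and $\beta_J$ simply not entering the inner integral. For the second equality I would integrate out $\beta_{J\setminus\{\rho\}}$: since $W^J(\beta_I)$, and hence the inner integral $\int_{(\htwo)^J}\mu_J^{W^J(\beta_I)}(\sigma_J)f(\sigma_J)$, depends only on $\beta_I$, the two right-hand sides of \eqref{eq:mixture-of-h22} coincide provided the $\beta_I$-marginal of $\nu^W_\Lambda$ agrees with that of $\nu^{\widehat W}_{I\cup\{\rho\}}$, where $\widehat W$ wires all of $J$ at $\rho$ as in \eqref{eq:def-W}. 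This is exactly the marginalization statement already invoked in Theorem \ref{thm:mixture-of-vrjps}, so I would cite it here rather than reprove it.

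I expect the main obstacle to be the supersymmetric bookkeeping in the evaluation of $Z_I(\sigma_J)$: carrying out the Gaussian $s$- and Grassmann $\psi$-integrations in the presence of the $\sigma_J$-dependent external field, checking that the bosonic normalization and the fermionic determinant cancel so as to leave exactly the density of \eqref{eq:def-nu} for $\beta_I$, and tracking the ``completing the square'' step so that it yields the Schur complement with the correct sign and the correct diagonal contributions to $W^J$. A useful consistency check is $f\equiv 1$, where both sides reduce to total masses and the cancellation reproduces that the $\htwo$-partition function equals $1$. In parallel one must justify well-definedness and the use of Fubini throughout, relying on the support constraint $H_\beta>0$, on $1+\sk{\sigma_i,\sigma_j}\ge 0$, and on the decay hypothesis on $f$, so that the formal manipulations above are legitimate.
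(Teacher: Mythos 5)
Your overall strategy --- integrate out all of $\sigma_I$ in one stroke and recognize the resulting partition function $Z_I(\sigma_J)$ in the external field generated by $\sigma_J$ as a $\beta_I$-mixture --- is a genuinely different route from the paper's, which integrates out the spins of $I$ one at a time by induction on $|I|$. But as written your argument has a real gap: the identity
\begin{align*}
Z_I(\sigma_J)=\int\nu^{\mathrm{rad}}(d\beta_I)\,
\exp\Bigl(\tfrac12\sum_{j,k\in J}\bigl(W^J_{jk}(\beta_I)-W_{jk}\bigr)\bigl(1+\sk{\sigma_j,\sigma_k}\bigr)\Bigr)
\end{align*}
is, as you yourself say, the whole content of the theorem, and you do not prove it; you only indicate that a horospherical-coordinate computation should produce it. That computation is not available off the shelf. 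The external field $\sum_{j\in J}W_{ij}\sigma_j$ seen by the $I$-spins carries Grassmann components, so the ``completing the square'' step mixes even and odd variables; more importantly, trading the radial variables $t_I$ for a $\beta_I$-field is exactly the Sabot--Tarr\`es--Zeng correspondence \emph{in the presence of a boundary condition} $\sigma_J$, which is not the statement proved in \cite{sabot-tarres-zeng2017} and would require an argument of essentially the same difficulty as the theorem itself. You also never identify $\nu^{\mathrm{rad}}$; for the first equality in \eqref{eq:mixture-of-h22} it must be the $\beta_I$-marginal of $\nu_\Lambda^W$, and verifying that the horospherical Jacobians and the fermionic determinant conspire to produce precisely this marginal is the crux, not bookkeeping. (Your consistency check at $f\equiv 1$ and your Schur-complement identity are correct, but they do not substitute for the computation.)

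The paper circumvents all of this by induction on $|I|$, removing one spin at a time. Lemma \ref{le:integrate-one-spin} gives the closed form $\int_{\htwo}\cD\sigma_i\,e^{\sum_j W^J_{ij}\sk{\sigma_i,\sigma_j}}=e^{-\|\sum_j W^J_{ij}\sigma_j\|}$; the inverse-Gaussian Laplace transform (Lemma \ref{le:laplace-IG} combined with \eqref{eq:marginal-beta-IG}) converts the resulting factor $e^{W^J_i(1-\|\cdot\|)}$ into a one-dimensional integral over a new variable $\tilde\beta_i$ with law given by the $i$-th marginal of $\nu_J^{W^J}$, producing a rank-one update $W^J_{jk}\mapsto W^J_{jk}+W^J_{ji}W^J_{ik}/(2\tilde\beta_i-W^J_{ii})$; the semigroup property \eqref{eq:W-tilde-J-W-J} of the Schur complement identifies the composition of these rank-one updates with $W^{\tilde J}(\beta_{\tilde I})$, and the conditioning property of $\nu^W$ glues the one-dimensional $\tilde\beta_i$-integrals into $\nu_\Lambda^W$. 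If you want to keep your global approach, you would essentially have to reprove the $t\leftrightarrow\beta$ change of variables with boundary; the inductive route is how the paper makes the supersymmetric bookkeeping tractable, reducing it to a single-spin integral plus classical probability. Your treatment of the second equality in \eqref{eq:mixture-of-h22} via the restriction property of the $\beta$-field is correct and coincides with the paper's.
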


\paragraph{How this paper is organized.}
Section \ref{subsec:Jump-processes} deals with the restriction
of Markov jump processes on finite graphs to subgraphs, the removal of self-loops,
and the combination of these two operations. In Section \ref{subsec:Application-to-vrjp},
this is used as an ingredient to treat
the same operations for vrjp, which is viewed as a mixture of Markov jump processes. 
In particular, Theorem \ref{thm:mixture-of-vrjps} is proved there.
This theory is applied to subdivided graphs in Section \ref{sec:proofs-for-subdivided-graphs}. 
Section \ref{sec:Recursion-for-the-weights} deals with a recursive description of the random
weights $W^{(l)}$ introduced in Theorem \ref{thm:vrjp-on-G-r-pos-rec}. This
results in a proof of Lemma \ref{le:bounds-W-l-one-step} and its consequence 
Theorem \ref{thm:from-r-to-l}. Section \ref{subsec:application-to-vrjp-and-errw}
proves the recurrence statements for vrjp and errw given in 
Theorems \ref{thm:vrjp-on-G-r-pos-rec} and~\ref{thm:Errw-on-subdivided-graphs}.
We avoided using the $\htwo$ model and supersymmetry in
Sections \ref{sec:Representation-as-a-mixture}
and \ref{sec:proofs-for-subdivided-graphs} to make the proofs more
accessible to probabilists. Alternatively, one could deduce Theorem
\ref{thm:mixture-of-vrjps} from the result on the $\htwo$ model given in
Theorem \ref{thm:mixture-of-htwo} instead of using the restriction
and conditioning property of the $\beta$-field. 
Proofs using superspin variables
are confined to Section \ref{sec:susy-proofs}, which proves Theorem
\ref{thm:mixture-of-htwo}. 
In Appendix \ref{sec:facts-IG}, we collect relevant results about
the inverse Gaussian distribution. 
The constants $C_\alpha$, $\cdrei$, and $\cnull$ keep their meaning throughout the paper. 

\section{Representation as a mixture}
\label{sec:Representation-as-a-mixture}

\subsection{Markov jump processes}
\label{subsec:Jump-processes}
\paragraph{Notation.}
Consider a Markov jump process $(X,T)$ on a finite connected graph $G=(\Lambda,E)$
with $|\Lambda|\ge 2$, transition rates $q=(q_{ij})_{i,j\in\Lambda}$, 
and starting point $\rho$. Assume in addition that it is reversible 
with reversible measure $\pi=(\pi_i)_{i\in\Lambda}\in(0,\infty)^\Lambda$,
meaning that $\pi_iq_{ij}=\pi_jq_{ji}$ for all $i,j\in\Lambda$. 
The corresponding discrete-time process $X$ is a
reversible Markovian random walk on the graph $G$ with edge weights, also called
conductances, given by $C_{ij}=C_{ji}=\pi_iq_{ij}$. We assume that $C_{ij}>0$
whenever $\{i,j\}\in E$. 
In this context, the law of the Markov jump process
together with its reversible measure is equivalently parametrized by the
conductance matrix $C=(C_{ij})_{i,j\in\Lambda}$ and $\pi$ instead
of $q$ and $\pi$. In the following, we realize $(X,T)$ as canonical process and
denote its law by
$Q^{C,\Lambda}_{\rho,\pi}$, where $\rho$ denotes the starting point. For $i\in\Lambda$, we denote
the corresponding total transition rate and total weight, respectively, by 
\begin{align}
  \label{eq:def-q-i-C-i}
  q_i:=\sum_{k\in\Lambda}q_{ik}, \quad
  C_i:=\sum_{k\in\Lambda}C_{ik}. 
\end{align}
For $n\in\N_0$ and $i\in\Lambda$, given $(X_l)_{l\le n}$, $(T_l)_{l\le n-1}$, and $X_n=i$,
the random variables $X_{n+1}$ and $T_n$ are conditionally independent. In particular, 
the conditional law of $X_{n+1}$ is specified by
\begin{align}
 \label{eq:one-step-proba-X-T}
  Q^{C,\Lambda}_{\rho,\pi}(X_{n+1}=j|(X_l)_{l\le n}, (T_l)_{l\le n-1}, X_n=i)
  =\frac{q_{ij}}{q_i}
  =\frac{C_{ij}}{C_i}
  =:p_{ij}
\end{align}
for $j\in\Lambda$ 
and the conditional law of $T_n$ is exponential with parameter $q_i$. 
Note that even stronger, the process $X$ and $T_n$ are conditionally independent
under the same condition. 

When we deal only with the discrete-time process $X$, but not with the sequence
of waiting times $T$,
the reversible measure $\pi$ becomes irrelevant; by abuse of notation
we write $Q^{C,\Lambda}_{\rho}$ instead of $Q^{C,\Lambda}_{\rho,\pi}$ in this context. 

Next, we deal with the law of the process $(X^{\neq},T^{\neq})$ with
self-loops removed and of the restriction $(X^J,T^J)$ to a vertex subset $J$
introduced in Definition \ref{def:restriction-to-a-subset}. The following definition
describes the corresponding parameters. 

\begin{definition}[Removal of self-loops and restriction to a subset: parameters]
  \label{def:parameters-restriction-to-a-subset}
  We define new transition probabilities, rates, and weights as follows
  \begin{align}
    & p^{\neq}=\left(p^{\neq}_{ij}:=\frac{p_{ij}}{1-p_{ii}}1_{\{i\neq j\}}\right)_{i,j\in\Lambda}, \quad
      q^{\neq}=(q^{\neq}_{ij}:=q_{ij}1_{\{i\neq j\}})_{i,j\in\Lambda}, \\
    & C^{\neq}=(C^{\neq}_{ij}:=C_{ij}1_{\{i\neq j\}})_{i,j\in\Lambda}, \quad
      C^{\neq}_i:=\sum_{k\in\Lambda}C^{\neq}_{ik}, i\in\Lambda. 
  \end{align}
  For any subset $J\subseteq\Lambda$ with $\rho\in J$, $|J|\ge 2$, we set
  $I=\Lambda\setminus J$ and define 
  \begin{align}
    \label{eq:def-p-J}
    p^J=(p^J_{ij})_{i,j\in J}:=p_{JJ}+\sum_{l=0}^\infty p_{JI}p_{II}^lp_{IJ},
  \end{align}
  which is a convergent series with $p_{ij}^J=Q^{C,\Lambda}_i(X^J_1=j)$.
  Furthermore, we define
  \begin{align}
    C^J=(C^J_{ij}:=C_ip^J_{ij})_{i,j\in J}, \quad
    q^J=\left(q^J_{ij}:=\frac{C_i}{\pi_i}p^J_{ij}\right)_{i,j\in J}.
  \end{align}
  The notation $p^{J\neq}$, $q^{J\neq}$, and $C^{J\neq}$ means that the two
  operations $\mbox{}^J$ and $\mbox{}^{\neq}$ have been applied successively. 
\end{definition}

The next lemma shows that the just defined quantities indeed parametrize 
the laws of the processes $(X^{\neq},T^{\neq})$, $(X^J,T^J)$, and $(X^{J\neq},T^{J\neq})$. 

\begin{lemma}[Laws of removal of self-loops and restriction to a subset]
  \label{le:law-restriction}
  \mbox{}\\
  Consider a reversible Markov jump process $(X,T)$ on the
  finite graph $G$. Assume that it starts in
  $\rho\in J$, has the reversible measure $\pi$ with 
  $\pi_i>0$ for all $i\in\Lambda$, and that the  
  jump rates are given by $q_{ij}=C_{ij}/\pi_i$ for $i,j\in\Lambda$. 
  Then, the processes $(X^{\neq},T^{\neq})$ and $(X^J,T^J)$ are
  again reversible Markov jump processes with rates $q^{\neq}$ and $q^J$, weights
  $C^{\neq}$ and $C^J$, transition probabilities $p^{\neq}$ and $p^J$, and
  reversible measures $\pi$ and $\pi|_J$, respectively. Applying both
  transformations successively, $(X^{J\neq},T^{J\neq})$ is also a reversible Markov jump process
  with rates $q^{J\neq}$, weights $C^{J\neq}$, transition probabilities $p^{J\neq}$, and
  reversible measure $\pi|_J$. 
  In particular, the restriction $X^{J\neq}$ of $X$ to $J$ with self-loops removed
  with starting point $X_0=\rho\in J$ has the same law as
  a random walk on the complete graph over $J$ endowed with the weights $C^{J\neq}$.
  In other words, $Q_{\rho}^{C,\Lambda}(X^{J\neq}\in\cdot)=Q_{\rho}^{C^{J\neq},J}$.
\end{lemma}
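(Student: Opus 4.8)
The plan is to establish the three structural claims in turn—removal of self-loops, restriction to $J$, and their composition—and then to read off the final identity. Throughout I use that a reversible Markov jump process is determined by its discrete skeleton $X$ together with the holding-time mechanism, and that by \eqref{eq:one-step-proba-X-T} the next jump target and the current holding time are conditionally independent given the present position. For self-loop removal I would argue as follows. Conditioning the one-step kernel $p_{ij}$ on the event $\{X_{n+1}\neq X_n\}$ and invoking the Markov property shows that $X^{\neq}$ is Markovian with kernel $p^{\neq}_{ij}=p_{ij}(1-p_{ii})^{-1}1_{\{i\neq j\}}$. While the original process sits at $i$, it makes a geometric (parameter $1-p_{ii}$) number of jumps, each preceded by an independent $\exp(q_i)$ waiting time; since a geometric sum of i.i.d.\ exponentials is again exponential, $T^{\neq}_n$ is $\exp(q_i(1-p_{ii}))$-distributed, and $q_i(1-p_{ii})=q_i-q_{ii}=q^{\neq}_i$ matches the rate matrix $q^{\neq}$. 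Conditional independence of target and holding time is inherited from the original process, and reversibility is immediate because $\pi_iq^{\neq}_{ij}=\pi_iq_{ij}1_{\{i\neq j\}}$ is symmetric in $i,j$; hence $\pi$ remains reversible and $C^{\neq}_{ij}=\pi_iq^{\neq}_{ij}$.

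For the restriction to $J$, observing the skeleton only at its successive visits to $J$ gives, by the strong Markov property, a Markov chain on $J$; decomposing a transition $i\to j$ into a direct step together with excursions through $I=\Lambda\setminus J$ yields the return kernel $p^J=p_{JJ}+\sum_{l\ge0}p_{JI}p_{II}^lp_{IJ}=p_{JJ}+p_{JI}(\Id-p_{II})^{-1}p_{IJ}$. Convergence of this series—equivalently invertibility of $\Id-p_{II}$—follows because $G$ is finite and connected, so from every vertex of $I$ the chain reaches $J$ with probability one, making $p_{II}$ substochastic with spectral radius below $1$; the same fact shows $p^J$ is stochastic. Since $T^J_n=T_{\tau_n}$ is simply the original holding time at the visited site $i\in J$, it is $\exp(q_i)$-distributed and, again by the Markov property, conditionally independent of the next recorded position. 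Thus $(X^J,T^J)$ is a Markov jump process whose total exit rate at $i$ is $q_i=C_i/\pi_i$ and whose jump rates are $q^J_{ij}=(C_i/\pi_i)p^J_{ij}$, matching $q^J$ and $C^J_{ij}=C_ip^J_{ij}$.

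The crux, and the step I expect to be the main obstacle, is the reversibility of this trace chain, that is the symmetry $C^J_{ij}=C^J_{ji}$, which is the one place where the reversibility hypothesis is used essentially rather than formally. I would prove it by a block-matrix computation. Writing $D=\diag((C_i)_{i\in\Lambda})$ and $P=(p_{ij})$, reversibility of the original chain means that $C=DP$ is symmetric, which in block form over $\Lambda=J\cup I$ gives $D_Jp_{JI}=p_{IJ}^{\mathsf T}D_I$ and $D_Ip_{II}=p_{II}^{\mathsf T}D_I$; the latter yields the symmetry transfer $D_I(\Id-p_{II})^{-1}=((\Id-p_{II})^{-1})^{\mathsf T}D_I$. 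Substituting these relations into $D_Jp^J=D_Jp_{JJ}+D_Jp_{JI}(\Id-p_{II})^{-1}p_{IJ}$ and using $D_Ip_{IJ}=p_{JI}^{\mathsf T}D_J$, one checks that $D_Jp^J$ equals its own transpose, so $C^J=D_Jp^J$ is symmetric and $(X^J,T^J)$ is reversible with measure $\pi|_J$.

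Finally, the composed process $(X^{J\neq},T^{J\neq})$ is obtained by applying self-loop removal to the reversible Markov jump process $(X^J,T^J)$ just analyzed, so the first part immediately delivers the rates $q^{J\neq}$, weights $C^{J\neq}$, kernel $p^{J\neq}$, and reversible measure $\pi|_J$. Since the law of a discrete-time reversible Markov chain depends on the conductances only through the normalized kernel $C^{J\neq}_{ij}/C^{J\neq}_i$, the skeleton $X^{J\neq}$ started at $\rho$ has precisely the law $Q^{C^{J\neq},J}_\rho$ of the random walk on $J$ with weights $C^{J\neq}$, which is the claimed identity.
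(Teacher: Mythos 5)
Your proposal is correct and follows essentially the same route as the paper: exponential holding times obtained from geometric sums of exponentials for self-loop removal, the excursion decomposition giving the trace kernel $p^J$, and reversibility of the restricted chain deduced from that of the original one. The only cosmetic differences are that you verify the symmetry of $C^J$ by resumming the series into $(\Id-p_{II})^{-1}$ and using a block-matrix identity where the paper checks it term by term in the series $\sum_{l\ge 0}p_{JI}p_{II}^lp_{IJ}$, and that the paper establishes the conditional independence of $X^J_{n+1}$ and $T^J_n$ through an explicit filtration argument (conditioning on the events $\{X^J_n=i,\tau_n=m\}$ and summing over $m$), which your appeal to the strong Markov property compresses.
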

\begin{proof}
  Consider the filtration $\F_n=\sigma((X_l)_{l\le n},(T_l)_{l\le n-1})$, $n\in\N_0$.
  When $(X,T)$ is replaced by $(X^{\neq},T^{\neq})$ and $(X^J,T^J)$, the corresponding
  filtrations are denoted by $(\F_n^{\neq})_n$ and $(\F_n^J)_n$, respectively. 

  We treat the process $(X^J,T^J)$ first. 
  Fix $m,n\in\N_0$ and $i\in J$. Let $B^J_{n,m}:=\{X_n^J=i,\tau_n=m\}$.
  Observe that on $B^J_{n,m}$, one has $X_m=i$ and $T_n^J=T_m$ and that
  given $\F_m$, the process $X$ and $T_n^J$ are conditionally independent. Hence,
  conditionally on the same, $X^J_{n+1}$ and $T_n^J$ are independent. Moreover,
  still under the same conditions,   
  $T_n^J=T_m$ is exponentially distributed with parameter $q_i=\sum_{k\in\Lambda}q_{ik}$ and
  for any $j\in J$, the event that
  $X_{n+1}^J=j$ holds with conditional probability $p_{ij}^J$. Note that every event
  $A\in\sigma(\F_n^J,B^J_{n,m})$ with $A\subseteq B^J_{n,m}$ fulfills $A\in\F_m$, and that
  $B^J_{n,m}\in\F_m$ holds. 
  Thus, for $t\ge 0$, on the event $B^J_{n,m}$, one has 
  \begin{align}
    Q^{C,\Lambda}_{\rho,\pi}(X^J_{n+1}=j,T_n^J\ge t|\F_n^J,B^J_{n,m})
    =&E_{Q^{C,\Lambda}_{\rho,\pi}}[Q^{C,\Lambda}_{\rho,\pi}(X^J_{n+1}=j,T_m\ge t|\F_m)|\F_n^J,B^J_{n,m}]
       \nonumber\\
    =&Q^{C,\Lambda}_{\rho,\pi}(X^J_{n+1}=j|\F_n^J,B^J_{n,m})e^{-tq_i}.
  \end{align}
  Summing over $m\in\N_0$ and using the $\sigma$-field
  $\B_n^J:=\sigma(B^J_{n,m},m\in\N_0)$ yields the following on the event
  $\{X_n^J=i\}=\bigcup_{m=0}^\infty B^J_{n,m}$:
  \begin{align}
    Q^{C,\Lambda}_{\rho,\pi}(X^J_{n+1}=j,T_n^J\ge t|\F_n^J,\B^J_n)
    =&Q^{C,\Lambda}_{\rho,\pi}(X^J_{n+1}=j|\F_n^J,\B^J_n)e^{-tq_i}.
  \end{align}
  Conditioning this on the smaller $\sigma$-field $\F_n^J$, we obtain
  on the event $\{X_n^J=i\}$, 
  \begin{align}
    Q^{C,\Lambda}_{\rho,\pi}(X^J_{n+1}=j,T_n^J\ge t|\F_n^J)
    =&Q^{C,\Lambda}_{\rho,\pi}(X^J_{n+1}=j|\F_n^J)e^{-tq_i},
  \end{align}
    and we conclude that $X^J_{n+1}$ and $T_n^J$ are conditionally independent 
  on the event $\{X_n^J=i\}$ given $\F_n^J$. Since the graph
  $G=(\Lambda,E)$ is finite and connected and $C_{ij}>0$ whenever $\{i,j\}\in E$, we find 
  that $\sum_{k\in J}q^J_{ik}=\frac{C_i}{\pi_i}\sum_{k\in J}p^J_{ik}=\frac{C_i}{\pi_i}$
  and hence $p^J_{ij}=\frac{\pi_i}{C_i}q^J_{ij}=q^J_{ij}/\sum_{k\in J}q^J_{ik}$. The
  weights for the restriction fulfill $C^J_{ij}=C_ip^J_{ij}=\pi_iq^J_{ij}$ and the
  reversibility condition $\pi_iq^J_{ij}=C_ip^J_{ij}=C_jp^J_{ji}=\pi_jq^J_{ji}$,
  which can be seen by multiplying the definition \eqref{eq:def-p-J}
  of $p^J_{ij}$ by $C_i$ and using repeatedly the original reversibility condition
  $C_kp_{kl}=C_lp_{lk}$. This proves
  the claim for the process $(X^J,T^J)$.

  Next, we treat the process $(X^{\neq},T^{\neq})$ in a similar way.
  Let $i\in\Lambda$, $m\in\N_0$, and set $B^{\neq}:=\{X_n^{\neq}=i,\sigma_n=m\}$. For
  the rest of this proof, the arguments are understood conditionally on $\F_m$ and $B^{\neq}$.
  Observe that $X_m=i$ and $T_n^{\neq}=\sum_{l=m}^{\sigma_{n+1}-1}T_l$.  
  Inductively on $k\in\N$, on the event $B^{\neq}$, for any
  Borel set $S\subseteq\R^k$ and $j\in\Lambda$, it follows that 
  \begin{align}
    &Q^{C,\Lambda}_{\rho,\pi}(X_m=\ldots=X_{m+k-1}=i, X_{m+k}=j,(T_m,\ldots,T_{m+k-1})\in S|\F_m)\nonumber\\
    =&p_{ii}^{k-1}p_{ij}\operatorname{Exp}(q_i)^{\times k}(S),
       \label{eq:geom-exp}
  \end{align}
  where $\operatorname{Exp}(q_i)^{\times k}$ is the $k$-th power of the exponential distribution
  with parameter $q_i$. 
  Note that only the special case $i=j$ is needed as induction hypothesis in this
  induction.

  Take now $j\neq i$. In this case, \eqref{eq:geom-exp} implies that   
  the waiting time $T_n^{\neq}=\sum_{l=\sigma_n}^{\sigma_{n+1}-1}T_l$ consists of a geometrically distributed  
  number of summands with 
  $Q^{C,\Lambda}_{\rho,\pi}(\sigma_{n+1}-\sigma_n=d|\F_m)=(1-p_{ii})p_{ii}^{d-1}$, $d\in\N$,
  on $B^{\neq}$. 
  Conditioning in addition on $\sigma_{n+1}-\sigma_n$, the summands $T_l$ are
  conditionally i.i.d.\ exponentially distributed with parameter $q_i$. 
  By the thinning property of the Poisson process, the waiting time $T_n^{\neq}$
  is exponentially distributed with parameter
  $(1-p_{ii})q_i=q_i-q_{ii}=\sum_{k\in\Lambda}q_{ik}^{\neq}=:q_i^{\neq}$, where we
  used \eqref{eq:one-step-proba-X-T} for $i=j$.
  Summing over $k$ yields 
  \begin{align}
    \label{eq:explicit-Q-proba}
    Q^{C,\Lambda}_{\rho,\pi}(X_{n+1}^{\neq}=j,T_n^{\neq}\ge t|\F_m)
    =& \sum_{k=1}^\infty p_{ii}^{k-1}p_{ij}\operatorname{Exp}(q_i)^{\ast k}([t,\infty))
       = p_{ij}^{\neq}e^{-t q_i^{\neq}};
  \end{align}
  here $\operatorname{Exp}(q_i)^{\ast k}$
  denotes the $k$-fold convolution of $\operatorname{Exp}(q_i)$. Furthermore, the new transition
  probabilities $p^{\neq}_{ij}$ and the new transition rates $q^{\neq}_{ij}$ are related
  for all $i,j\in\Lambda$ by 
  \begin{align}
    \frac{q^{\neq}_{ij}}{q^{\neq}_i}=\frac{q_{ij}1_{\{i\neq j\}}}{(1-p_{ii})q_i}
    =\frac{p_{ij}1_{\{i\neq j\}}}{1-p_{ii}}=p^{\neq}_{ij}.
  \end{align}
  Finally, the new reversibility relation $C^{\neq}_{ij}=\pi_iq^{\neq}_{ij}=\pi_jq^{\neq}_{ji}$
  is an immediate consequence of the original one. 
\end{proof}

\subsection{Application to vrjp}
\label{subsec:Application-to-vrjp}

In the last section, the parameters $q$, $\pi$, and $C$ were deterministic.
In this section, which deals with vrjp rather than Markov jump processes,
they become random because vrjp in exchangeable time-scale is a mixture
of reversible Markov jump processes as was shown in \cite[Theorem 2]{sabot-tarres2012}.
The role of the deterministic conductances $C_{ij}$ is now
overtaken by random conductances $W_{ij}e^{u_i+u_j}$ with appropriate
random variables $u_i$, $i\in\Lambda$, introduced in Lemma \ref{le:mixture-given-beta-I},
below. These $u$-variables are functions of the $\nu^W_\Lambda$-distributed random
field $\beta$ introduced in \eqref{eq:def-nu}. The following remark
reviews some crucial properties of this $\beta$-field. 

\begin{remark}[Properties of $\beta$,
  {\cite[Sect.\ 5.1, Proposition 1 and Lemma 5]{sabot-zeng15}}]
  \label{rem:restriction-conditioning-property-beta}
\mbox{}\\
 Let $\beta\sim\nu^W_\Lambda$.
 Then, $(\beta_i-\frac12 W_{ii})_{i\in\Lambda}\sim\nu^{\wneq}_\Lambda$
 with $\wneq=(\wneq_{ij}=W_{ij}1_{\{i\neq j\}})_{i,j\in\Lambda}$. 
 For any $i\in\Lambda$, one has
  \begin{align}
    \label{eq:marginal-beta-IG}
    (2\beta_i-W_{ii})^{-1}\sim \ig\left(W_i^{-1},1\right)
        \quad\text{with}\quad
    W_i=\sum_{j\in\Lambda\setminus\{i\}}W_{ij},
  \end{align}
  where $\ig(\mu,\lambda)$ denotes the inverse Gaussian distribution
  with parameters $\mu,\lambda>0$; see Appendix \ref{sec:facts-IG}. 
  Assume that $\Lambda=I\cup J$ is finite with $I\cap J=\emptyset$, $|J|\ge 2$.
  The conditioning property states that conditioned on 
$\beta_I$, one has $\beta_J\sim\nu_J^{W^J(\beta_I)}$, and hence
$(\beta_j-\frac12W_{jj}^J(\beta_I))_{j\in J}\sim\nu_J^{W^{J\neq}(\beta_I)}$
with the weights $W^J$ and $W^{J\neq}$ from \eqref{def:W-J} and \eqref{eq:def-W-J-neq}. 
The restriction property states that 
$\beta_I$ is the restriction of $\beta_{I\cup\{\rho\}}\sim\nu_{I\cup\{\rho\}}^{\widehat W}$
with $\widehat W$ defined in \eqref{eq:def-W}. 
\end{remark}

The next remark describes how to recover the law of the original process
$(X,T)$ with self-loops from its self-loop removed version $(X^{\neq},T^{\neq})$
and additional auxiliary independent Poisson processes. 

\begin{remark}[Decoration of vrjp with self-loops]
  \label{rem:vrjp-self-loops}
  Vrjp with self-loops described by $P_\rho^{W,\Lambda}$ and vrjp without
  self-loops described by $P_\rho^{W^{\neq},\Lambda}$ are related as follows. 
  If $(X,T)$ is distributed according to $P_\rho^{W,\Lambda}$, then
  $(X^{\neq},T^{\neq})$ is distributed according to $P_\rho^{W^{\neq},\Lambda}$.
  Conversely, if $(\tilde X,\tilde T)$ is distributed according to
  $P_\rho^{W^{\neq},\Lambda}$, given any vertex $i\in\Lambda$, we take a Poisson process
  with intensity $\frac12 W_{ii}$, visualized as exponential clocks. 
  These Poisson processes should be independent of each other and $(\tilde X,\tilde T)$.
  Whenever the jumping particle is at $i\in\Lambda$, we include a self-loop
  $i\to i$ whenever the corresponding exponential clock rings. In other words,
  we include self-loops at $i$ with rate $\frac12 W_{ii}$. The resulting augmented
  process $(X,T)$ is then distributed according to $P_\rho^{W,\Lambda}$. 
\end{remark}

The next lemma introduces the $u$-field $u=(u_i)_{i\in\Lambda}$
as a function of the $\beta$-field. It then describes how the $u$-field behaves
under restriction of the underlying vertex set $\Lambda$ to some subset $J\subseteq\Lambda$
with $\rho\in J$. This restriction property is used to express the effective
random conductances $C_{ij}^J$, $i,j\in J$, for the restriction of vrjp to $J$. 
We abbreviate $J_-:=J\setminus\{\rho\}$ and $\Lambda_-:=\Lambda\setminus\{\rho\}$.

\begin{lemma}[Restriction property of the $u$-field]
  \label{le:mixture-given-beta-I}
  Let $\Lambda=I\cup J$ be finite with $I\cap J=\emptyset$, $\rho\in J$, and
  $W\in[0,\infty)^{\Lambda\times\Lambda}$. 
  For $\beta=(\beta_I,\beta_J)\in\R^\Lambda$ such that $[H_\beta^W]_{\Lambda_-\Lambda_-}$
  is positive definite, let 
  $e^{u_{\Lambda_-}(\beta)}=([H_\beta^W]_{\Lambda_-\Lambda_-})^{-1}W_{\Lambda_-\rho}$
  and $u_\rho(\beta)=0$.
  Let $H_{\beta_J}^{W^J(\beta_I)}$ and $H_{\beta^{J\neq}}^{W^{J\neq}(\beta_I)}$
  denote the $J\times J$ matrices obtained
  from $H_\beta^W$ defined in \eqref{eq:def-H-beta} with $(\beta,W)$
  replaced by $(\beta_J,W^J(\beta_I))$ and $(\beta^{J\neq},W^{J\neq}(\beta_I))$,
  respectively, with $\beta^{J\neq}:=(\beta_j-\frac12W^J_{jj}(\beta_I))_{j\in J}$,
  $W^J(\beta_I)$ from \eqref{def:W-J}, and 
  $W^{J\neq}(\beta_I)$ from \eqref{eq:def-W-J-neq}. 
  Then, one has the following restriction property
  \begin{align}
    \label{eq:exp-u-J-in-volume-J}
    e^{u_{J_-}(\beta)}=([H_{\beta_J}^{W^J(\beta_I)}]_{J_-J_-})^{-1}W_{J_-\rho}^J(\beta_I)
    =([H_{\beta^{J\neq}}^{W^{J\neq}(\beta_I)}]_{J_-J_-})^{-1}W_{J_-\rho}^{J\neq}(\beta_I). 
  \end{align}
  As a consequence, $u_{J_-}(\beta)$ depends only on $W^J(\beta_I)$ and $\beta_J$.
  We write $u_J(W^J(\beta_I),\beta_J)=u_J(W^{J\neq}(\beta_I),\beta^{J\neq})$
  instead of $u_J(\beta)$.
  
  As an application of \eqref{eq:exp-u-J-in-volume-J} for fixed $\beta_I$, 
  vrjp on $J$ with parameters $W^J(\beta_I)$ and $W^{J\neq}(\beta_I)$, respectively, are
  mixtures of reversible Markov jump processes with weights 
  \begin{align}
    \label{eq:def-cal-W-J-neq}
     C^J_{ij}(\beta_I,e^{u_J})
    =W^J_{ij}(\beta_I)e^{u_i+u_j} \quad\text{and}\quad
    C^{J\neq}_{ij}(\beta_I,e^{u_J})
    =W^{J\neq}_{ij}(\beta_I)e^{u_i+u_j}, 
    \quad i,j\in J,
  \end{align}
  and the same reversible measure $\pi(u_J)=(2e^{2u_i})_{i\in J}$ 
  with $u_J=u_J(W^J(\beta_I),\beta_J)$ in both cases, where
  $\beta_J$ is a $\nu_J^{W^J(\beta_I)}$-distributed random variable.
  In other words, for any event $A\subseteq J^{\N_0}\times\R_+^{\N_0}$, one has
  \begin{align}
    \label{eq:vrjp-on-J-with-W-J-neq-as-mixture}
  P_\rho^{W^{J\neq}(\beta_I),J}((X,T)\in A)=&
  \int_{\R^J}Q^{C^{J\neq}(\beta_I,e^{u_J(W^{J\neq}(\beta_I),\tilde\beta_J)}),J}_{\rho,\pi(u_J(W^{J\neq}(\beta_I),\tilde\beta_J))}((X,T)\in A)\,
  \nu_J^{W^{J\neq}(\beta_I)}(d\tilde\beta_J)
  \end{align}
  and the same formula with ``$J{\neq}$'' replaced by ``$J$'' at all five
  occurrences.
\end{lemma}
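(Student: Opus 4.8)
The plan is to split the statement into two parts: the purely linear-algebraic restriction identity \eqref{eq:exp-u-J-in-volume-J} for the $u$-field, and the mixture representation \eqref{eq:vrjp-on-J-with-W-J-neq-as-mixture}, which I would reduce to the Sabot--Zeng representation of vrjp combined with the shift property of the $\beta$-field from Remark \ref{rem:restriction-conditioning-property-beta}. For the first part, I would first reinterpret the defining relation of the $u$-field as a linear system. Writing $\psi:=e^{u(\beta)}$, the conditions $\psi_\rho=1$ and $e^{u_{\Lambda_-}(\beta)}=([H_\beta^W]_{\Lambda_-\Lambda_-})^{-1}W_{\Lambda_-\rho}$ are equivalent, since $[H_\beta^W]_{i\rho}=-W_{i\rho}$ for $i\in\Lambda_-$, to $\psi_\rho=1$ together with $(H_\beta^W\psi)_i=0$ for all $i\in\Lambda_-$. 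Note that $[H_\beta^W]_{II}$ is a principal submatrix of the positive definite matrix $[H_\beta^W]_{\Lambda_-\Lambda_-}$ (as $I\subseteq\Lambda_-$), hence invertible.

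The key observation is that $H_{\beta_J}^{W^J(\beta_I)}$ is precisely the Schur complement of the block $[H_\beta^W]_{II}$ in $H_\beta^W$: ordering the vertices as $I$ followed by $J$ and using $[H_\beta^W]_{IJ}=-W_{IJ}$ and $[H_\beta^W]_{JJ}=2\diag(\beta_J)-W_{JJ}$, the definition \eqref{def:W-J} gives
\[
  [H_\beta^W]_{JJ}-[H_\beta^W]_{JI}([H_\beta^W]_{II})^{-1}[H_\beta^W]_{IJ}
  =2\diag(\beta_J)-W_{JJ}-W_{JI}([H_\beta^W]_{II})^{-1}W_{IJ}
  =H_{\beta_J}^{W^J(\beta_I)}.
\]
Eliminating $\psi_I$ from the $I$-equations ($\psi_I=([H_\beta^W]_{II})^{-1}W_{IJ}\psi_J$, as these have zero right-hand side) and substituting into the $J_-$-equations is exactly the standard block-elimination that replaces $[H_\beta^W]_{JJ}$ by its Schur complement; it yields $(H_{\beta_J}^{W^J(\beta_I)}\psi_J)_i=0$ for $i\in J_-$, with $\psi_\rho=1$. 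Using $[H_{\beta_J}^{W^J(\beta_I)}]_{i\rho}=-W^J_{i\rho}(\beta_I)$ for $i\in J_-$, this reads $[H_{\beta_J}^{W^J(\beta_I)}]_{J_-J_-}\psi_{J_-}=W^J_{J_-\rho}(\beta_I)$. Since the Schur complement of a positive definite matrix along a principal block is again positive definite, $[H_{\beta_J}^{W^J(\beta_I)}]_{J_-J_-}$ is invertible, giving both the solvability (uniqueness of $\psi_{J_-}$) and the first equality in \eqref{eq:exp-u-J-in-volume-J}.

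For the second equality I would use that the shift $\beta^{J\neq}_j=\beta_j-\tfrac12W^J_{jj}(\beta_I)$ exactly cancels the removal of the diagonal of $W^J(\beta_I)$: since $W^{J\neq}(\beta_I)=W^J(\beta_I)-\diag((W^J_{jj}(\beta_I))_{j\in J})$, one has $H_{\beta^{J\neq}}^{W^{J\neq}(\beta_I)}=2\diag(\beta^{J\neq})-W^{J\neq}(\beta_I)=2\diag(\beta_J)-W^J(\beta_I)=H_{\beta_J}^{W^J(\beta_I)}$. Moreover $\rho\notin J_-$, so the entries $W^{J\neq}_{i\rho}(\beta_I)=W^J_{i\rho}(\beta_I)$ for $i\in J_-$ are unaffected by deleting the diagonal, i.e.\ $W^{J\neq}_{J_-\rho}(\beta_I)=W^J_{J_-\rho}(\beta_I)$. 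Hence the two right-hand sides in \eqref{eq:exp-u-J-in-volume-J} coincide, and in particular $u_{J_-}(\beta)$ depends on $\beta$ only through $W^J(\beta_I)$ and $\beta_J$.

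Finally, the mixture identity \eqref{eq:vrjp-on-J-with-W-J-neq-as-mixture} is the Sabot--Zeng representation \cite{sabot-tarres2012,sabot-zeng15} applied to vrjp on the graph $J$ with weights $W^{J\neq}(\beta_I)$: conditionally on $\tilde\beta_J\sim\nu_J^{W^{J\neq}(\beta_I)}$, the process is the reversible Markov jump process with conductances $C^{J\neq}_{ij}=W^{J\neq}_{ij}(\beta_I)e^{u_i+u_j}$ and reversible measure $(2e^{2u_i})_{i\in J}$, where $u=u_J(W^{J\neq}(\beta_I),\tilde\beta_J)$ is the $u$-field of the small system; the companion formula for $W^J(\beta_I)$ follows identically, using the extension of the representation to nonnegative diagonal weights (or, equivalently, the self-loop decoration of Remark \ref{rem:vrjp-self-loops}). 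The role of \eqref{eq:exp-u-J-in-volume-J} is to reconcile the two descriptions: it identifies $u_J(W^J(\beta_I),\beta_J)=u_J(W^{J\neq}(\beta_I),\beta^{J\neq})$, and since the shift property of Remark \ref{rem:restriction-conditioning-property-beta} maps $\beta_J\sim\nu_J^{W^J(\beta_I)}$ to $\beta^{J\neq}\sim\nu_J^{W^{J\neq}(\beta_I)}$, the change of variables $\tilde\beta_J=\beta^{J\neq}$ rewrites the $W^{J\neq}$-mixture as an integral over $\beta_J\sim\nu_J^{W^J(\beta_I)}$ with the common $u$-field, giving the unified statement. The main obstacle is the bookkeeping in the first part---recognizing the Schur-complement structure and tracking the positive-definiteness of the various principal blocks and Schur complements---while the second part is essentially an application of cited results together with the already-established identity \eqref{eq:exp-u-J-in-volume-J}.
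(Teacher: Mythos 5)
Your proposal is correct and follows essentially the same route as the paper: the first equality in \eqref{eq:exp-u-J-in-volume-J} is obtained by block elimination of the $I$-variables from the linear system defining $e^{u}$ (your Schur-complement framing is exactly the identity \eqref{eq:H-beta-double-inverse} of Lemma \ref{le:mixture-of-htwo2}, which the paper's proof carries out by hand), the second equality follows from the cancellation of the diagonal shift, and the mixture statement is the Sabot--Tarr\`es representation combined with the identification $u_J(W^J(\beta_I),\beta_J)=u_J(W^{J\neq}(\beta_I),\beta^{J\neq})$, the shift property of the $\beta$-field, and the self-loop decoration of Remark \ref{rem:vrjp-self-loops}. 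No gaps.
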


Thus, the following two procedures yield the same result:
\begin{itemize}
\item Deriving the $u$-field on $\Lambda$ and then restricting it to $J_-$. 
\item Taking random weights $W^J(\beta_I)$, depending only on the restriction
  of $\beta$ to $I$ and then using these random weights to derive the
  $u$-field on $J_-$. 
\end{itemize}

\medskip\noindent
\begin{proof}[Proof of Lemma \ref{le:mixture-given-beta-I}]
The defining relation
  $[H_\beta^W]_{\Lambda_-\Lambda_-}e^{u_{\Lambda_-}}=
  W_{\Lambda_-\rho}$ of $u_{\Lambda_-}=u_{\Lambda_-}(\beta)$
  can be rewritten in block diagonal form
  \begin{align}
    \label{eq:defining-relation-exp-u}
    \begin{pmatrix}
      [H_\beta^W]_{II} & -W_{IJ_-}\\
      -W_{J_-I} & [H_\beta^W]_{J_-J_-}
    \end{pmatrix}
                  \begin{pmatrix}
                    e^{u_I}\\ e^{u_{J_-}}
                  \end{pmatrix}
    =
    \begin{pmatrix}
      W_{I\rho}\\ W_{J_-\rho}
    \end{pmatrix}
.
  \end{align}
 Multiplying the first equation from the left with 
$W_{J_-I}([H_\beta^W]_{II})^{-1}$, we obtain
\begin{align}
  \label{eq:first-def-rel-exp-u}
  W_{J_-I}e^{u_I}
  -W_{J_-I}([H_\beta^W]_{II})^{-1}W_{IJ_-}e^{u_{J_-}}
  =W_{J_-I}([H_\beta^W]_{II})^{-1}W_{I\rho}.
\end{align}
Using the definitions of $H_\beta^W$ and $W^J(\beta_I)$, we calculate
\begin{align}
  \label{eq:H-beta-J-W-J}
  [H_{\beta_J}^{W^J(\beta_I)}]_{J_-J_-}e^{u_{J_-}}
  = [H_\beta^W]_{J_-J_-}e^{u_{J_-}}-W_{J_-I}([H_\beta^W]_{II})^{-1}W_{IJ_-}e^{u_{J_-}}.
\end{align}
Here we used that the $i$-th diagonal element of $H_{\beta_J}^{W^J(\beta_I)}$ equals
$2\beta_i-W^J_{ii}(\beta_I)=2\beta_i-W_{ii}-W_{iI}([H_\beta^W]_{II})^{-1}W_{Ii}$.
The second equation from \eqref{eq:defining-relation-exp-u} yields
\begin{align}
  [H_\beta^W]_{J_-J_-}e^{u_{J_-}}=W_{J_-I}e^{u_I}+W_{J_-\rho}. 
\end{align}
Inserting this in \eqref{eq:H-beta-J-W-J} and then using \eqref{eq:first-def-rel-exp-u},
we obtain
\begin{align}
  \label{eq:e-u-J-in-terms-of-beta-J}
  [H_{\beta_J}^{W^J(\beta_I)}]_{J_-J_-}e^{u_{J_-}}
  =&W_{J_-\rho}+W_{J_-I}e^{u_I}-W_{J_-I}([H_\beta^W]_{II})^{-1}W_{IJ_-}e^{u_{J_-}}
     \nonumber \\
  =&W_{J_-\rho}+W_{J_-I}([H_\beta^W]_{II})^{-1}W_{I\rho}
  =W_{J_-\rho}^J(\beta_I).
\end{align}
Thus, $e^{u_{J_-}}=([H_{\beta_J}^{W^J(\beta_I)}]_{J_-J_-})^{-1}W_{J_-\rho}^J(\beta_I)$,
which proves the first equality in \eqref{eq:exp-u-J-in-volume-J}.
The second equality is an immediate consequence of the definitions of
$H_{\beta^{J\neq}}^{W^{J\neq}(\beta_I)}$ and $\beta^{J\neq}$, cf.\
formula \eqref{eq:def-H-beta}. 
\cite[Theorem 2]{sabot-tarres2012} implies that 
the vrjp on $J$ starting at $\rho$ with parameters $W^{J\neq}(\beta_I)$
is a mixture of Markov jump processes with weights 
$C^{J\neq}(\beta_I,e^{u_J(W^J(\beta_I),\beta_J)})$ given in 
\eqref{eq:def-cal-W-J-neq} and reversible measure
$\pi(u_J(W^J(\beta_I),\beta_J))$ with $\beta_J\sim\nu_J^{W^J(\beta_I)}$
with the given $\beta_I$.
Using $u_J(W^J(\beta_I),\beta_J)=u_J(W^{J\neq}(\beta_I),\beta^{J\neq})$ and
$\beta^{J\neq}\sim\nu_J^{W^{J\neq}(\beta_I)}$, claim 
\eqref{eq:vrjp-on-J-with-W-J-neq-as-mixture} follows.
The variant of \eqref{eq:vrjp-on-J-with-W-J-neq-as-mixture} with ``$J{\neq}$''
replaced by ``$J$'' is then obtained by a decoration with self-loops as described in
Remark \ref{rem:vrjp-self-loops}.
\end{proof}

We now prove that restriction of vrjp to a subset $J\subseteq\Lambda$
containing the starting point is a mixture of vrjps. 

\medskip\noindent
\begin{proof}[Proof of Theorem \ref{thm:mixture-of-vrjps}]
  Let $\beta=\beta_\Lambda$ denote the canonical process on $\R^\Lambda$ with law $\nu_\Lambda^W$.
  On the event that $[H_\beta^W]_{\Lambda_-\Lambda_-}$ is positive definite, which is a
  $\nu_\Lambda^W$-a.s.\ event, let $u_\rho=0$ and
  $e^{u_{\Lambda_-}}=(e^{u_i})_{i\in\Lambda_-}=([H_\beta^W]_{\Lambda_-\Lambda_-})^{-1}W_{\Lambda_-\rho}$
  with $\Lambda_-=\Lambda\setminus\{\rho\}$.
  By \cite[Theorem 2]{sabot-tarres2012},
  the vrjp in exchangeable time scale on $\Lambda$ with initial
  parameters $W$ is a mixture of Markov jump processes with random transition rates
  $q_{ij}(\beta):=\frac12W_{ij}e^{u_j-u_i}$ for any edge $\{i,j\}$. In matrix notation
  we can write this as
  \begin{align}
    \label{eq:def-q-ij}
    (q_{ij}(\beta))_{i,j\in\Lambda}
    =\frac12e^{-u}We^u\text{ with }
    e^{\pm u}=\diag(e^{\pm u_i})_{i\in\Lambda}. 
  \end{align}
  Observe that the defining equation
  of $e^{u_{\Lambda_-}}$ is equivalent to 
  $\beta_i=\frac12\sum_{j\in\Lambda}W_{ij}e^{u_j-u_i}=\sum_{j\in\Lambda}q_{ij}(\beta)$,
  $i\in\Lambda_-$ and consequently, $\beta_i$ plays the random analogue of the total
  jump rate $q_i$, cf.\ formula \eqref{eq:def-q-i-C-i}.
  The random rates are
  equivalently described by the random weights
  $C_{ij}(\beta):=W_{ij}e^{u_i+u_j}$ and the random reversible measure
  $\pi_i(\beta):=2e^{2u_i}$. The random transition probabilities $p_{ij}(\beta)$ can
  now be described in terms of the random total weight 
  $C_i(\beta):=\sum_{j\in\Lambda}C_{ij}(\beta)=2\beta_ie^{2u_i}$ by
  $p_{ij}(\beta)=C_{ij}(\beta)/C_i(\beta)=q_{ij}(\beta)/\beta_i$. Combining this with
  \eqref{eq:def-q-ij} yields the transition probability matrix
  \begin{align}
    \label{eq:p-ij-in-terms-of-W-u}
    p(\beta)=(p_{ij}(\beta))_{i,j\in\Lambda}=e^{-u}(2\beta)^{-1}We^u
    \text{ with }\beta:=\diag(\beta_i)_{i\in\Lambda}. 
  \end{align}
  The assumption that $[H_\beta^W]_{\Lambda_-\Lambda_-}$ is positive definite
  implies $M:=(2\beta_I)^{-\frac12}W_{II}(2\beta_I)^{-\frac12}<\Id$. Since the symmetric matrix
  $M$ has only non-negative entries, the Frobenius theorem implies $-M<\Id$ as well.
  It follows that the geometric series 
  \begin{align}
    \sum_{l=0}^\infty[(2\beta_I)^{-1}W_{II}]^l
    =&(2\beta_I)^{-\frac12}\sum_{l=0}^\infty M^l(2\beta_I)^{\frac12}\nonumber\\
    =&(2\beta_I)^{-\frac12}(\Id-M)^{-1}(2\beta_I)^{\frac12}=([H^W_\beta]_{II})^{-1}2\beta_I
  \end{align}
  converges. 
  Using the definition \eqref{eq:def-p-J} of $p^J(\beta)$, its description 
  \eqref{eq:p-ij-in-terms-of-W-u}, and the last identity, 
  we calculate 
  \begin{align}
    & p^J(\beta)=p_{JJ}(\beta)+\sum_{l=0}^\infty p_{JI}(\beta)p_{II}(\beta)^lp_{IJ}(\beta)
    \nonumber\\
    =&e^{-u_J}(2\beta_J)^{-1}\left[W_{JJ}
       +\sum_{l=0}^\infty W_{JI} [(2\beta_I)^{-1}W_{II}]^l (2\beta_I)^{-1}W_{IJ}\right]e^{u_J}
    \nonumber\\
    =&e^{-u_J}(2\beta_J)^{-1}W^J(\beta_I)e^{u_J}. 
  \end{align}
  Combined with $\diag(C_i(\beta))_{i\in J}=2\beta_Je^{2u_J}$,
  this yields the weights  
  $C^J(\beta)=\diag(C_i(\beta))_{i\in J}\cdot p^J(\beta)=e^{u_J}W^J(\beta_I)e^{u_J}$ for the
  restriction to $J$.   
  By Lemma \ref{le:law-restriction}, for given $\beta$, the restriction of a Markov jump
  process with weights $C(\beta)$ and reversible measure $\pi(\beta)$ to $J$
  with self-loops removed is a reversible Markov jump process with weights
  $C^{J\neq}(\beta_I,e^{u_J})=(C^{J\neq}_{ij}(\beta_I,e^{u_J}):=W^J_{ij}(\beta_I)e^{u_i+u_j}1_{\{i\neq j\}})_{i,j\in J}$
  and reversible measure $\pi(u_J)=(2e^{2u_i})_{i\in J}$. In other words, for any event 
  $A\subseteq J^{\N_0}\times\R_+^{\N_0}$,
\begin{align}
  \label{eq:rel-different-Qs}
  Q^{C(\beta),\Lambda}_{\rho,\pi}((X^{J\neq},T^{J\neq})\in A)
  =Q^{C^{J\neq}(\beta_I,e^{u_J}),J}_{\rho,\pi(u_J)}((X,T)\in A). 
\end{align}
By Lemma \ref{le:mixture-given-beta-I}, 
$u_J=u_J(W^J(\beta_I),\beta_J)=u_J(W^{J\neq}(\beta_I),\beta^{J\neq})$.
For the restriction of vrjp to $J$ with self-loops removed, it follows 
  \begin{align}
    P_\rho^{W,\Lambda}((X^{J\neq},T^{J\neq})\in A)
    =&\int_{\R^\Lambda}Q^{C(\beta),\Lambda}_{\rho,\pi}((X^{J\neq},T^{J\neq})\in A)\,\nu_\Lambda^W(d\beta)\nonumber\\
    =&\int_{\R^\Lambda}Q^{C^{J\neq}(\beta_I,u_J(W^{J\neq}(\beta_I),\beta^{J\neq}),J}_{\rho,\pi(u_J(W^{J\neq}(\beta_I),\beta^{J\neq}))}((X,T)\in A)\,\nu_\Lambda^W(d\beta).
       \label{eq:mixture1}
  \end{align}
We apply the restriction and conditioning property of $\beta\sim\nu^W_\Lambda$
cited in Remark \ref{rem:restriction-conditioning-property-beta}.
The restriction property states that
$\beta_I$ is the restriction of $\beta_{I\cup\{\rho\}}\sim\nu_{I\cup\{\rho\}}^{\widehat W}$
with $\widehat W$ defined in \eqref{eq:def-W}.
By the conditioning property, given $\beta_I$, one has $\beta_J\sim\nu_J^{W^J(\beta_I)}$
and therefore $\beta^{J\neq}\sim\nu_J^{W^{J\neq}(\beta_I)}$.
Consequently, 
\begin{align}
  \label{eq:mixture2}
  \rhs\eqref{eq:mixture1}=
  \int_{\R^{I\cup\{\rho\}}}\int_{\R^J}Q^{C^{J\neq}(\beta_I,e^{u_J(W^{J\neq}(\beta_I),\tilde\beta_J)}),J}_{\rho,\pi(u_J(W^{J\neq}(\beta_I),\tilde\beta_J))}((X,T)\in A)\,
  \nu_J^{W^{J\neq}(\beta_I)}(d\tilde\beta_J)\,\nu_{I\cup\{\rho\}}^{\widehat W}(d\beta_{I\cup\{\rho\}}).
\end{align}
By Lemma \ref{le:mixture-given-beta-I}, the inner integral equals
$P_\rho^{W^{J\neq}(\beta_I),J}((X,T)\in A)$. The second equality in the claim
\eqref{eq:mixture-of-vrjps} follows from the restriction property of the
$\beta$-field. 
The statement for $(X^J,T^J)$ follows analogously, with ``$J{\neq}$'' replaced by
``$J$'' in \eqref{eq:mixture1} and \eqref{eq:mixture2}. This completes the proof of the theorem.
\end{proof}

\section{Proofs for subdivided graphs}
\label{sec:proofs-for-subdivided-graphs}
\subsection{Recursion for the weights}
\label{sec:Recursion-for-the-weights}

In this section, we deal with subdivided graphs, which are obtained by
iteratedly introducing a new vertex in the middle of every edge. Going back
from a subdivided graph to the original graph corresponds then to a restriction.
The next lemma shows a kind of semigroup property for this restriction
operation on effective weights: Iterated restriction on weights gives the
same result as restriction in one single step. 

\begin{lemma}[Effective weights for iterated restrictions to subsets]
  \label{le:mixture-of-htwo2}
 Let $\Lambda=I\cup J$ with disjoint finite sets $I$ and $J$, $\rho\in J$.
  One has 
  \begin{align}
    \label{eq:H-beta-double-inverse}
    ([H_\beta^{-1}]_{JJ})^{-1}=[H_\beta]_{JJ} -W_{JI}([H_\beta]_{II})^{-1}W_{IJ} 
    =2\diag(\beta_J)-W^J(\beta_I)
    =H_{\beta_J}^{W^J(\beta_I)}
  \end{align}
  with the weights $W^J(\beta_I)$ defined in \eqref{def:W-J}. 
  For $\tilde J\subsetneq J$ with $\rho\in\tilde J$ and
    $\tilde I=\Lambda\setminus\tilde J$, one has 
    \begin{align}
      \label{eq:W-tilde-J-W-J}
      W^{{\tilde J}}(\beta_{\tilde I})=(W^J(\beta_I))^{\tilde{J}}(\beta_{J\setminus\tilde J}).
    \end{align}
\end{lemma}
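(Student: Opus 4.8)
The plan is to prove the two identities in Lemma \ref{le:mixture-of-htwo2} separately, establishing \eqref{eq:H-beta-double-inverse} first by a direct block-matrix (Schur complement) computation and then deriving the semigroup property \eqref{eq:W-tilde-J-W-J} as a consequence.

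For \eqref{eq:H-beta-double-inverse}, I would write $H_\beta$ in block form with respect to the decomposition $\Lambda = I \cup J$:
\begin{align*}
H_\beta = \begin{pmatrix} [H_\beta]_{II} & -W_{IJ} \\ -W_{JI} & [H_\beta]_{JJ} \end{pmatrix},
\end{align*}
using that the off-diagonal blocks of $H_\beta = 2\diag(\beta) - W$ are just $-W_{IJ}$ and $-W_{JI}$ (since the diagonal term contributes nothing off the diagonal). The standard formula for the inverse of a block matrix identifies the $JJ$-block of $H_\beta^{-1}$ as the inverse of the Schur complement of $[H_\beta]_{II}$, namely $[H_\beta^{-1}]_{JJ} = \left([H_\beta]_{JJ} - W_{JI}([H_\beta]_{II})^{-1}W_{IJ}\right)^{-1}$. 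Inverting both sides gives the first equality. The second equality is then immediate from the definition \eqref{def:W-J} of $W^J(\beta_I) = W_{JJ} + W_{JI}([H_\beta]_{II})^{-1}W_{IJ}$ together with $[H_\beta]_{JJ} = 2\diag(\beta_J) - W_{JJ}$, so that the Schur complement equals $2\diag(\beta_J) - W^J(\beta_I)$; the third equality is just the definition \eqref{eq:def-H-beta} of $H_{\beta_J}^{W^J(\beta_I)}$. The only technical point to check is that $[H_\beta]_{II}$ is invertible, which holds on the relevant ($\nu_\Lambda^W$-a.s.) event where the appropriate restricted matrix is positive definite.

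For the semigroup property \eqref{eq:W-tilde-J-W-J}, I would apply the first identity of \eqref{eq:H-beta-double-inverse} twice. Applying it with the pair $(I,J)$ gives $([H_\beta^{-1}]_{JJ})^{-1} = H_{\beta_J}^{W^J(\beta_I)}$. Now I perform a \emph{second} restriction inside $J$: applying \eqref{eq:H-beta-double-inverse} to the matrix $H_{\beta_J}^{W^J(\beta_I)}$ with the splitting $J = (J\setminus\tilde J)\cup\tilde J$ yields $\bigl([(H_{\beta_J}^{W^J(\beta_I)})^{-1}]_{\tilde J\tilde J}\bigr)^{-1} = 2\diag(\beta_{\tilde J}) - (W^J(\beta_I))^{\tilde J}(\beta_{J\setminus\tilde J})$. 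The key observation is that $(H_{\beta_J}^{W^J(\beta_I)})^{-1} = [H_\beta^{-1}]_{JJ}$ by the first step, so its $\tilde J\tilde J$-block is $[H_\beta^{-1}]_{\tilde J\tilde J}$. Thus the left-hand side equals $([H_\beta^{-1}]_{\tilde J\tilde J})^{-1}$, which by a \emph{direct} application of \eqref{eq:H-beta-double-inverse} with the splitting $\Lambda = \tilde I \cup \tilde J$ equals $2\diag(\beta_{\tilde J}) - W^{\tilde J}(\beta_{\tilde I})$. Comparing the two expressions and cancelling the common diagonal term $2\diag(\beta_{\tilde J})$ gives $W^{\tilde J}(\beta_{\tilde I}) = (W^J(\beta_I))^{\tilde J}(\beta_{J\setminus\tilde J})$, as claimed.

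The main conceptual point — and the only place where care is genuinely needed — is the identity $[(H_{\beta_J}^{W^J(\beta_I)})^{-1}]_{\tilde J\tilde J} = [H_\beta^{-1}]_{\tilde J\tilde J}$, i.e.\ that restricting the inverse to $\tilde J$ commutes with the two-step restriction procedure. This is precisely what \eqref{eq:H-beta-double-inverse} supplies: since the full inverse $H_\beta^{-1}$ restricted to $JJ$ is the inverse of the effective matrix $H_{\beta_J}^{W^J(\beta_I)}$, restricting that inverse further to $\tilde J$ is the same as restricting $H_\beta^{-1}$ directly to $\tilde J$. I expect no serious obstacle beyond bookkeeping; the entire lemma is essentially the statement that Schur complementation is transitive (the quotient/semigroup property of Schur complements), and invertibility of all intermediate blocks holds on the positive-definite event that is assumed throughout.
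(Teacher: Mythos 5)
Your proposal is correct and follows essentially the same route as the paper: identity \eqref{eq:H-beta-double-inverse} via the block-matrix/Schur-complement inversion formula together with the definitions of $W^J(\beta_I)$ and $H_\beta$, and identity \eqref{eq:W-tilde-J-W-J} by applying \eqref{eq:H-beta-double-inverse} twice (once for $H_\beta^W$, once for $H_{\beta_J}^{W^J(\beta_I)}$) and using that $[[(H_\beta^W)^{-1}]_{JJ}]_{\tilde J\tilde J}=[(H_\beta^W)^{-1}]_{\tilde J\tilde J}$. This is precisely the paper's chain of equalities, so nothing further is needed.
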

\begin{proof}
Formula \eqref{eq:H-beta-double-inverse} is a special case of the formula for
the Schur complement combined with the definitions of $W^J(\beta_I)$ and 
\eqref{eq:def-H-beta} of $H_\beta=H_\beta^W$. Formula \eqref{eq:W-tilde-J-W-J}
is a consequence of the following calculation, which uses the inversion
formula \eqref{eq:H-beta-double-inverse} twice, first for $H_\beta^W$ and
second for $H_{\beta_J}^{W^J(\beta_I)}$. 
\begin{align}
  &2\diag(\beta_{{\tilde J}})-W^{{\tilde J}}(\beta_{\tilde I})
  =([(H_\beta^W)^{-1}]_{{\tilde J}{\tilde J}})^{-1}
    =([[(H_\beta^W)^{-1}]_{JJ}]_{{\tilde J}{\tilde J}})^{-1}
  \nonumber\\
  &=([(2\diag(\beta_J)-W^J(\beta_I))^{-1}]_{\tilde J \tilde J})^{-1}
    =([(H_{\beta_J}^{W^J(\beta_I)})^{-1}]_{\tilde J \tilde J})^{-1}\nonumber\\
  &=2\diag(\beta_{{\tilde J}})-(W^J(\beta_I))^{\tilde{J}}(\beta_{J\setminus\tilde J}).
    \label{eq:diag-beta-tilde-J-W-tilde-J}
\end{align}
\end{proof}

The next definition introduces subdivisions and some notations associated with
it more formally. In the following, let $G=(\Lambda,E)$ be an undirected graph
without self-loops.

\begin{definition}[Subdivided graphs]
  \label{def:subdivided-graph}
  For $r\in\N_0$, the \emph{$2^r$-subdivision}
$G_r=(\Lambda_r,E_r)$ of $G$ is obtained by replacing every edge by a series
of $2^r$ edges as follows.
\begin{itemize}
\item $\Lambda_r$ is obtained from $\Lambda$ by adding $2^r-1$ new vertices $v_{e,2^{-r}j}$,
  $j=1,\ldots,2^r-1$ for
  any edge $e\in E$. We say that these new vertices are located on the edge $e$. 
  In particular, $\Lambda\subseteq \Lambda_r$. 
\item Every edge $e\in E$ is replaced by a series of $2^r$ edges $e_{1,r},\ldots,e_{2^r,r}$ as follows.
  For bookkeeping purposes only, we endow $e$ with a direction, and 
  call $v_{e,0}$ and $v_{e,1}$ the two vertices it consists of. Then, $e$ is
  replaced by the new edges $e_{j,r}=\{v_{e,2^{-r}(j-1)},v_{e,2^{-r}j}\}$, $1\le j\le 2^r$,
  which we view as being located on the edge $e$. Thus, 
  $E_r=\{e_{j,r}:e\in E, 1\le j\le 2^r\}$.
\end{itemize}
Note that $\Lambda_0=\Lambda$ and $\Lambda_r\subseteq\Lambda_{r+1}$. 
For technical convenience, we introduce also the set of
direct self-loops $\Delta_r:=\{\{i,i\}:i\in\Lambda_r\}$. 
\end{definition}

From now on, we fix $r\in\N_0$. 
We endow the graph $G_r=(\Lambda_r,E_r)$ with 
strictly positive possibly random edge weights $W=(W_e)_{e\in E_r}$.
Given $W$, let $\beta=(\beta_v)_{v\in\Lambda_r}$ have the conditional distribution
$\nu_{\Lambda_r}^W$. Although this probability measure is not only defined
for finite graphs, but also for infinite locally finite ones, let us assume for the
moment that the graph $G$ is finite. 
For varying $l\in\{0,\ldots,r\}$, consider the effective weights
\begin{align}
  W^{(l)}:=
  W^{\Lambda_l}=W^{\Lambda_l}(\beta_{\Lambda_r\setminus\Lambda_l})=(W_e^{\Lambda_l})_{e\in E_l\cup\Delta_l}
\end{align}
on $G_l$ using the notation of \eqref{def:W-J}. We abbreviate
  \begin{align}
    \label{eq:def-hat-beta}
    \beta^{(l)\neq}_v:=\beta^{\Lambda_l\neq}=\beta_v-\frac12W_{vv}^{\Lambda_l},v\in\Lambda_l, 
  \end{align}
  cf.\ Lemma \ref{le:mixture-given-beta-I}. 
  In particular, $W^{(r)}=W$ and $\beta^{(r)\neq}=\beta$.

\begin{lemma}[Recursion for the random weights]
  \label{le:aux-W-l-1}
  \mbox{}\\
  Let the graph $G$ be finite and $l\in\{0,\ldots,r\}$. 
  \begin{enumerate}
  \item \label{item0}
    Given $(W^{(l)}_e)_{e\in E_l}$, the random vector $\beta^{(l)\neq}$ has the
  conditional distribution $\nu_{\Lambda_l}^{W^{(l)\neq}}$. 
  \item \label{item1}
    Assuming $l\ge 1$, take an arbitrary edge $\bar e=e_{j,l-1}\in E_{l-1}$ with
      $e\in E$ and $1\le j\le 2^{l-1}$; see Figure \ref{fig:splitting-an-edge}.
      Call $v=v_{e,2^{-l}(2j-1)}\in \Lambda_l\setminus\Lambda_{l-1}\subseteq\Lambda_l\setminus\Lambda_0$
      its midpoint.
      It splits $\bar e$ into the two edges $e'=e_{2j-1,l}$ and $e''=e_{2j,l}$ in $E_l$. 
      Then, one has 
      \begin{align}
        \label{eq:recursion-W-l}
   W^{(l-1)}_{\bar e}=\frac{W_{e'}^{(l)}W_{e''}^{(l)}}{2\beta_v^{(l)\neq}}.
 \end{align}
\item \label{item2}
  Assume $l\ge 2$, which implies that $\Lambda_{l-1}\setminus\Lambda_0\neq\emptyset$. 
  Take an arbitrary vertex
      $\bar v=v_{e,2^{-(l-1)}j}\in\Lambda_{l-1}\setminus\Lambda_0$ with $e\in E$ and
      $1\le j\le 2^{l-1}-1$. In particular, $\bar v$ belongs also to
      $\Lambda_l\setminus\Lambda_0$; see Figure \ref{fig:splitting-an-edge}.
      Call $e^1=e_{2j,l}\in E_l$ and $e^2=e_{2j+1,l}\in E_l$ the
      two edges adjacent to $\bar v$ in $G_l$. Call
      $v^1=v_{e,2^{-l}(2j-1)},v^2=v_{e,2^{-l}(2j+1)}\in\Lambda_l\setminus\Lambda_{l-1}\subseteq\Lambda_l\setminus\Lambda_0$
      the two neighboring vertices
      of $\bar v$ in $G_l$. Then, one has 
      \begin{align}
        \label{eq:recursion-beta-hat-v}
        2\beta_{\bar v}^{(l-1)\neq}=2\beta_{\bar v}^{(l)\neq}-\frac{(W_{e^1}^{(l)})^2}{\beta_{v^1}^{(l)\neq}}
      -\frac{(W_{e^2}^{(l)})^2}{\beta_{v^2}^{(l)\neq}}.
    \end{align}
  \item
    \label{item3} Assume $l\ge 1$ again. 
    Given $(W^{(l)}_e)_{e\in E_l}$, the random variables $\beta^{(l)\neq}_v$,
    $v=v_{e,2^{-l}(2j-1)}\in\Lambda_l\setminus\Lambda_{l-1}$, are conditionally independent with
    conditional law $(2\beta^{(l)\neq}_v)^{-1}\sim \ig((W_{e'}^{(l)}+W_{e''}^{(l)})^{-1},1)$, where
    we use the notation $e'=e_{2j-1,l}$ and $e''=e_{2j,l}$ from item \ref{item1}.
  \item \label{item5}
    If the family of weights $(W_e)_{e\in E}$ is independent or even i.i.d.,
    then so is the family of weights $(W^{(l)}_e)_{e\in E_l}$.
  \end{enumerate}
\end{lemma}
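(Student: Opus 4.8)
The plan is to prove the five items in the order \ref{item0}, (\ref{item1},\ref{item2}), \ref{item3}, \ref{item5}, working throughout conditionally on the level-$r$ weights $W$ so that $\beta\sim\nu^W_{\Lambda_r}$. Everything is driven by one structural observation: by Definition \ref{def:subdivided-graph} the newest midpoints $\Lambda_l\setminus\Lambda_{l-1}$ form an independent set in $G_l$, since each $v=v_{e,2^{-l}(2j-1)}$ is adjacent in $G_l$ only to its two neighbours $v_{e,2^{-l}(2j-2)},v_{e,2^{-l}(2j)}\in\Lambda_{l-1}$ and never to another midpoint. Hence the block of $H^{W^{(l)}}_{\beta_{\Lambda_l}}$ indexed by $\Lambda_l\setminus\Lambda_{l-1}$ is diagonal, with entries $2\beta_v-W^{(l)}_{vv}=2\beta^{(l)\neq}_v$, and every Schur complement with respect to these midpoints decouples into independent single-vertex eliminations. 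For item \ref{item0} I would condition first on $\beta_{\Lambda_r\setminus\Lambda_l}$: the conditioning property of Remark \ref{rem:restriction-conditioning-property-beta} gives $\beta_{\Lambda_l}\sim\nu^{W^{(l)}}_{\Lambda_l}$, and the self-loop-removal property in the same remark upgrades this to $\beta^{(l)\neq}\sim\nu^{W^{(l)\neq}}_{\Lambda_l}$. As this law depends on $\beta_{\Lambda_r\setminus\Lambda_l}$ only through $(W^{(l)}_e)_{e\in E_l}$, the tower property lets me pass to the coarser $\sigma$-field generated by $(W^{(l)}_e)_{e\in E_l}$, which is item \ref{item0}.

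For items \ref{item1} and \ref{item2} I would use the semigroup identity \eqref{eq:W-tilde-J-W-J} of Lemma \ref{le:mixture-of-htwo2} to reduce the passage from level $l$ to $l-1$ to the single elimination $W^{(l-1)}=(W^{(l)})^{\Lambda_{l-1}}(\beta_{\Lambda_l\setminus\Lambda_{l-1}})$, and then to expand the defining formula \eqref{def:W-J}. Because the midpoint block of the relevant $H$-matrix is diagonal, the effective off-diagonal weight between the two endpoints of $\bar e$ receives a contribution only from their unique common midpoint, which yields \eqref{eq:recursion-W-l}; similarly the effective diagonal (self-loop) weight at $\bar v$ receives contributions only from its two midpoint neighbours, which after subtraction from $2\beta_{\bar v}$ gives \eqref{eq:recursion-beta-hat-v}. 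Both are short computations once the diagonal structure is used.

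The delicate point is item \ref{item3}, the conditional independence of the midpoint variables. The naive Markov-field heuristic is misleading here, because for the $\beta$-field conditioning on a vertex set coincides with eliminating it (this is precisely the conditioning property), so conditioning on $\beta_{\Lambda_{l-1}}$ would couple the midpoints through their shared neighbours. Instead I would apply the restriction property of Remark \ref{rem:restriction-conditioning-property-beta} with $I=\Lambda_l\setminus\Lambda_{l-1}$ and $J=\Lambda_{l-1}\ni\rho$: starting from $\beta^{(l)\neq}\sim\nu^{W^{(l)\neq}}_{\Lambda_l}$, the marginal of the midpoint coordinates is the corresponding marginal of $\nu^{\widehat W}_{(\Lambda_l\setminus\Lambda_{l-1})\cup\{\rho\}}$, where $\widehat W$ wires all of $\Lambda_{l-1}$ to $\rho$, cf.\ \eqref{eq:def-W}. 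Since midpoints are mutually non-adjacent and adjacent only to $\Lambda_{l-1}$, this $\widehat W$ is supported on a star with centre $\rho$ and leaf weights $\widehat W_{v\rho}=W^{(l)}_{e'}+W^{(l)}_{e''}$. On a star $H_{\widehat W}$ is an arrow matrix, so $\det H_{\widehat W}$ is affine in $\beta_\rho$; integrating out the single centre variable $\beta_\rho$ factorises the density of the leaves into one-dimensional factors, each being exactly the density for which $(2\beta^{(l)\neq}_v)^{-1}\sim\ig((W^{(l)}_{e'}+W^{(l)}_{e''})^{-1},1)$, consistent with the marginal \eqref{eq:marginal-beta-IG} and the facts in Appendix \ref{sec:facts-IG}. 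This produces independence and the stated law simultaneously, and is the main obstacle of the proof.

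Item \ref{item5} then follows by induction on $r-l$ with base case $W^{(r)}=W$. In the inductive step, \eqref{eq:recursion-W-l} writes each $W^{(l-1)}_{\bar e}$ as a function of the two sub-edge weights $W^{(l)}_{e'},W^{(l)}_{e''}$ and of the single midpoint variable $\beta^{(l)\neq}_v$; distinct edges $\bar e\in E_{l-1}$ use disjoint sub-edges and disjoint midpoints, and item \ref{item3} makes these midpoint variables conditionally independent given $(W^{(l)}_e)_{e\in E_l}$. Hence independence, and in the identically distributed case also equality of laws (the functional form and the input distributions coincide), is inherited by $(W^{(l-1)}_e)_{e\in E_{l-1}}$ from $(W^{(l)}_e)_{e\in E_l}$.
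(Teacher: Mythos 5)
Your proposal is correct, and for items \ref{item0}, \ref{item1}, \ref{item2}, and \ref{item5} it follows essentially the same route as the paper: item \ref{item0} from the conditioning and self-loop-removal properties of Remark \ref{rem:restriction-conditioning-property-beta}, items \ref{item1} and \ref{item2} from the semigroup identity \eqref{eq:W-tilde-J-W-J} together with the observation that every edge of $E_l$ joins $\Lambda_{l-1}$ to a midpoint, so the midpoint block of $H^{W^{(l)}}_{\beta}$ is the diagonal matrix $2\beta^{(l)\neq}_I$, and item \ref{item5} by downward induction using \eqref{eq:recursion-W-l} and the disjointness of the input pairs $(W^{(l)}_{e'},W^{(l)}_{e''})$ across edges of $E_{l-1}$. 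The one place where you genuinely diverge is item \ref{item3}. The paper obtains the conditional independence in one line from the one-dependence of $\beta^{(l)\neq}\sim\nu^{W^{(l)\neq}}_{\Lambda_l}$ (the midpoints form an independent set in $G_l$) and reads the marginal law off \eqref{eq:marginal-beta-IG}. You instead invoke the restriction property with $I=\Lambda_l\setminus\Lambda_{l-1}$ and $J=\Lambda_{l-1}\ni\rho$, reduce to the star-shaped wired weights $\widehat W$ with leaf entries $\widehat W_{v\rho}=W^{(l)}_{e'}+W^{(l)}_{e''}$, and integrate out the centre variable explicitly, using that $\det H_{\widehat W,\beta}$ is affine in $\beta_\rho$ for an arrow matrix. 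Both arguments are valid; yours costs a short Gaussian-type computation but delivers the mutual independence and the inverse-Gaussian marginal simultaneously from properties already stated in Remark \ref{rem:restriction-conditioning-property-beta}, whereas the paper's is shorter at the price of quoting one-dependence as an additional external fact. Your caveat that conditioning on $\beta_{\Lambda_{l-1}}$ (rather than only on the weights) would couple the midpoints through the effective interactions $W_{JI}([H_\beta]_{II})^{-1}W_{IJ}$ is also correct and worth keeping in mind.
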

\begin{proof}
  Claim \ref{item0} is a consequence of the conditioning property cited in
  Remark \ref{rem:restriction-conditioning-property-beta}.

  We prove now items \ref{item1} and \ref{item2}. 
  Locally for this proof, we abbreviate $J=\Lambda_{l-1}$ and 
  $I=\Lambda_l\setminus\Lambda_{l-1}$. We observe that any edge $e\in E_l$
  connects a vertex in $J$ to a vertex in $I$; see Figure \ref{fig:splitting-an-edge}. In particular, there
are neither edges in $E_l$ between two vertices in $J$ nor
between two vertices in $I$. As a consequence, 
$W^{(l)}_{JJ}=\diag(W^{(l)}_{vv})_{v\in J}$ and
$[H_{\Lambda_l,\beta}^{W^{(l)}}]_{II}
=\diag(2\beta_v-W^{(l)}_{vv})_{v\in I}=\diag(2\beta_v^{(l)\neq})_{v\in I}=2\beta_I^{(l)\neq}$.  
Using first \eqref{eq:W-tilde-J-W-J} and then formula \eqref{def:W-J}, it follows 
\begin{align}
  W^J=(W^{(l)})^{J}
  =&W^{(l)}_{JJ}+W^{(l)}_{JI}
                      ([H_{\Lambda_l,\beta}^{W^{(l)}}]_{II})^{-1}W^{(l)}_{IJ}\nonumber\\
  =&\diag(W^{(l)}_{vv})_{v\in J}+
  W^{(l)}_{JI}(2\beta_I^{(l)\neq})^{-1}     W^{(l)}_{IJ}.
\end{align}
This implies formula \eqref{eq:recursion-W-l} when reading it for
off-diagonal entries and 
\begin{align}
        \label{eq:recursion-Wvv-new}
        W^{(l-1)}_{\bar v\bar v}=W_{\bar v\bar v}^{(l)}+\frac{(W_{e^1}^{(l)})^2}{2\beta_{v^1}^{(l)\neq}}
        +\frac{(W_{e^2}^{(l)})^2}{2\beta_{v^2}^{(l)\neq}}.
\end{align}
when reading it on the diagonal. Formula \eqref{eq:recursion-beta-hat-v} follows. 

We prove now item \ref{item3}. 
There is no edge in $E_l$ connecting two vertices in $\Lambda_l\setminus\Lambda_{l-1}$.
The following arguments are understood conditionally on $(W^{(l)}_e)_{e\in E_l}$.
The one-dependence of
$\beta^{(l)\neq}\sim\nu_{\Lambda_l}^{W^{(l)\neq}}$ implies that the entries
$\beta^{(l)\neq}_v$, $v\in\Lambda_l\setminus\Lambda_{l-1}$ are independent.
Furthermore, the inverse Gaussian law of $(\beta^{(l)\neq}_v)^{-1}$ follows
from \eqref{eq:marginal-beta-IG}. 

Finally, we prove claim \ref{item5}. Assume that $(W_e)_{e\in E}$ are independent
or i.i.d., respectively. We prove
that $W^{(l)}_e$, $e\in E_l$, are unconditionally independent or i.i.d., respectively,
by induction over
$l=r,r-1,\ldots,0$. The initial case $l=r$ holds by assumption.
For the induction step $l\to l-1$ we use the recursion relation
\eqref{eq:recursion-W-l}. 
Given $(W^{(l)}_e)_{e\in E_l}$, the reciprocal denominators 
$(2\beta^{(l)\neq}_v)^{-1}$
with $v=v_{e,2^{-l}(2j-1)}$ for $1\le j\le 2^{l-1}$ and $e\in E$ are conditionally
independent with conditional law $\ig((W_{e_{2j-1,l}}^{(l)}+W_{e_{2j,l}}^{(l)})^{-1},1)$.
The parameter $(W_{e_{2j-1,l}}^{(l)}+W_{e_{2j,l}}^{(l)})^{-1}$ and the
numerator $W_{e_{2j-1,l}}^{(l)}W_{e_{2j,l}}^{(l)}$ both are functions of the pair
$(W_{e_{2j-1,l}}^{(l)},W_{e_{2j,l}}^{(l)})$. As $j$ runs from $1$ to $2^{l-1}$
and $e$ runs through $E$, these pairs are independent or i.i.d., respectively,
by induction hypothesis. 
In view of \eqref{eq:recursion-W-l} this concludes the induction step. 
\end{proof}

\begin{figure}
  \centering
    \includegraphics[width=\textwidth]{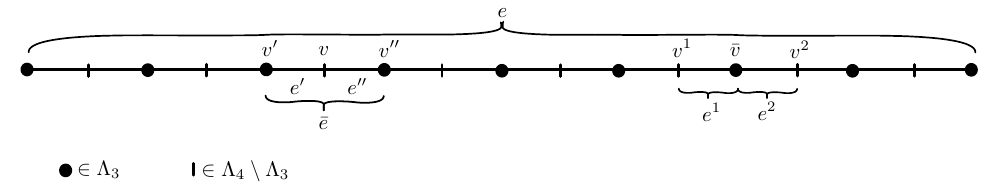}
  \caption{Illustration of the recursion $l\leadsto l-1$ for $l=4$ in the cases
    $\bar e=e_{3,3}$, $e'=e_{5,4}$, $e''=e_{6,4}$, $v'=v_{e,\frac28}$, $v=v_{e,\frac{5}{16}}$,
    $v''=v_{e,\frac38}$,
    $e^1=e_{12,4}$, $e^2=e_{13,4}$, $v^1=v_{e,\frac{11}{16}}$,
    $\bar v=v_{e,\frac68}$, and $v^2=v_{e,\frac{13}{16}}$.}
  \label{fig:splitting-an-edge}
\end{figure}

This lemma is an important ingredient in the proof of the induction step
for the moments of $W_e^{(l)}$. 

\medskip\noindent
\begin{proof}[Proof of Lemma \ref{le:bounds-W-l-one-step}]
  Take $\bar e\in E_{l-1}$. Let $v$ be the midpoint of $\bar e$. It 
  splits it into two edges, which we may call $e',e''\in E_l$; cf.\ item \ref{item1}
  in Lemma \ref{le:aux-W-l-1}.
  Using the recursion equation \eqref{eq:recursion-W-l}, one has 
  \begin{align}
    \E[(W^{(l-1)}_{\bar e})^\alpha]
    =&\E\left[\left(\frac{W_{e'}^{(l)}W_{e''}^{(l)}}{2\beta_v^{(l)\neq}}\right)^\alpha\right]
       =\E[(W_{e'}^{(l)}W_{e''}^{(l)})^\alpha
       \E[(2\beta_v^{(l)\neq})^{-\alpha}|W^{(l)}]]
       \label{eq:exp-W-hoch-alpha-induction-step}, \\
    \E[\log W^{(l-1)}_{\bar e}]
  =&\E[\log W_{e'}^{(l)}+\log W_{e''}^{(l)}
     +\E[\log((2\beta_v^{(l)\neq})^{-1})|W^{(l)}]]. 
       \label{eq:exp-log-induction-step}
  \end{align}
  By item \ref{item3} in Lemma
  \ref{le:aux-W-l-1}, given $W^{(l)}$, the random variable
  $(2\beta_v^{(l)\neq})^{-1}$ has an inverse Gaussian distribution
  $\ig( (W^{(l)}_{e'}+W^{(l)}_{e''})^{-1},1)$, hence
  its conditional expectation equals the first parameter
  $(W^{(l)}_{e'}+W^{(l)}_{e''})^{-1}$.
  In combination with Lemma \ref{le:bound-exp-X-alpha}
  and \eqref{eq:upper-bound-log-min} in the appendix,
  we obtain
  \begin{align}
    &\E[(2\beta_v^{(l)\neq})^{-\alpha}|W^{(l)}]\le
      (W^{(l)}_{e'}+W^{(l)}_{e''})^{-\alpha}\quad\text{for }\alpha\in[0,1] , \\
    &\E[(2\beta_v^{(l)\neq})^{-\alpha}|W^{(l)}]\le
      C_\alpha\quad\text{for }\alpha\in[0,\tfrac12) , \\
    &\E[\log((2\beta_v^{(l)\neq})^{-1})|W^{(l)}]
    \le\min\{-\log(W^{(l)}_{e'}+W^{(l)}_{e''}),\cnull\}.
  \end{align}
  The next step uses the inequality between arithmetic and geometric mean
  of two numbers $x,y>0$ in the following form
  \begin{align}
    \frac{xy}{x+y}=\frac12\frac{\sqrt{xy}}{\frac12(x+y)}\sqrt{xy}
    \le\frac12\sqrt{xy}. 
  \end{align}
  Next, we insert these inequalities in \eqref{eq:exp-W-hoch-alpha-induction-step}
  and \eqref{eq:exp-log-induction-step}.  
  Using Cauchy Schwarz in \eqref{eq:first-bound-alpha} and independence
  and identical distribution of
  $W_{e'}^{(l)}$ and $W_{e''}^{(l)}$ in \eqref{eq:second-bound-alpha}, we conclude 
  \begin{align}
    \E[(W^{(l-1)}_{\bar e})^\alpha]
    \le &\E\left[\left(\frac{W_{e'}^{(l)}W_{e''}^{(l)}}{W^{(l)}_{e'}+W^{(l)}_{e''}}\right)^\alpha
          \right]
          \le \E\left[\left(\frac12\sqrt{W_{e'}^{(l)}W_{e''}^{(l)}}\right)^\alpha
          \right]\nonumber\\
    \le &2^{-\alpha}\E[(W_{e'}^{(l)})^\alpha]^{\frac12}
                \E[(W_{e''}^{(l)})^\alpha]^{\frac12}=2^{-\alpha}\E[(W_{e'}^{(l)})^\alpha]
          \quad\text{for }\alpha\in[0,1],
    \label{eq:first-bound-alpha}\\
     \E[(W^{(l-1)}_{\bar e})^\alpha]
    \le &C_\alpha \E[(W_{e'}^{(l)}W_{e''}^{(l)})^\alpha]
          \nonumber\\
          \le &C_\alpha \E[(W_{e'}^{(l)})^\alpha]
                \E[(W_{e''}^{(l)})^\alpha]=C_\alpha \E[(W_{e'}^{(l)})^\alpha]^2
                \quad\text{for }\alpha\in[0,\tfrac12),
    \label{eq:second-bound-alpha}\\
        \E[\log W^{(l-1)}_{\bar e}]
    \le &\E\left[\log \frac{W_{e'}^{(l)}W_{e''}^{(l)}}{W^{(l)}_{e'}+W^{(l)}_{e''}}\right]
          \le \frac12(\E[\log W_{e'}^{(l)}]+\E[\log W_{e''}^{(l)}])-\log 2\nonumber\\
    = & \E[\log W_{e'}^{(l)}]-\log 2,\\
    \E[\log W^{(l-1)}_{\bar e}]
    \le & \E[\log W_{e'}^{(l)}]+\E[\log W_{e''}^{(l)}] +\cnull
          =2\E[\log W_{e'}^{(l)}] +\cnull.
  \end{align}
\end{proof}

One could alternatively obtain $\E[\log W^{(l-1)}_{\bar e}]\le \E[\log W^{(l)}_{e'}]-\log 2$ from
\eqref{eq:bound-exp-w-l-alpha} using $\lim_{\alpha\downarrow 0}(x^\alpha-1)/\alpha=\log x$ and
interchanging the limit $\alpha\downarrow 0$ with the expectation.

The following proof is based on iteration of the bounds in
Lemma \ref{le:bounds-W-l-one-step}. 

\medskip\noindent
\begin{proof}[Proof of Theorem \ref{thm:from-r-to-l}]
  Iterating \eqref{eq:bound-exp-w-l-alpha} $r-l$ times, we obtain
  \eqref{eq:phase1-alpha}. For $\alpha\in[0,\frac12)$,   
  the idea of the proof is to iterate \eqref{eq:bound-exp-w-l-alpha},
  starting with $l=r$, as long as it gives a better bound
  than \eqref{eq:bound2-exp-w-l-alpha}, and then switching over to
  the other bound \eqref{eq:bound2-exp-w-l-alpha}. 
  Let $m\in\{l,\ldots,r\}$.
  Iterating \eqref{eq:bound2-exp-w-l-alpha} $m-l$ times yields
  \begin{align}
     \label{eq:phase2-alpha}
   C_\alpha \E[(W^{(l)}_{\bar e})^\alpha]
    &\le (C_\alpha \E[(W^{(m)}_{\tilde e})^\alpha])^{2^{m-l}} 
  \end{align}
  for $\tilde e\in E_m$.
  Applying \eqref{eq:phase1-alpha} with $l$ replaced by $m$,
  i.e., $\E[(W^{(m)}_{\tilde e})^\alpha]\le (2^{-\alpha})^{r-m}\E[(W^{(r)}_{e'})^\alpha]$
  gives 
  \begin{align}
    C_\alpha \E[(W^{(l)}_{\bar e})^\alpha]
    \le \left(C_\alpha 
       (2^{-\alpha})^{r-m}\E[(W^{(r)}_{e'})^\alpha]\right)^{2^{m-l}}
  \end{align}
  and \eqref{eq:combined-alpha} follows using $W^{(r)}_{e'}=W_{e'}$. 
  To identify a minimizer, let
  $X_m:=C_\alpha (2^{-\alpha})^{r-m}\E[W_{e'}^\alpha]$.
  Note that the argument of the minimum in \eqref{eq:combined-alpha}
  is given by $X_m^{2^{m-l}}$. We
  observe the following equivalences
\begin{align}
   & X_m^{2^{m-l}}<X_{m-1}^{2^{m-1-l}}
    \;\Leftrightarrow\;
  (X_m^2)^{2^{m-1-l}}<(2^{-\alpha} X_m)^{2^{m-1-l}}
    \;\Leftrightarrow\;
     X_m<2^{-\alpha}
     \;\Leftrightarrow\nonumber\\
&  m<r-1-\alpha^{-1}\log_2(C_\alpha \E[W_{e'}^\alpha])
                               \;\Leftrightarrow\;
   m\le r-2-\lfloor\alpha^{-1}\log_2(C_\alpha \E[W_{e'}^\alpha])\rfloor
\end{align}
The claim about the minimizer $m_0$ in \eqref{eq:combined-alpha} follows. 

Let $0\le l\le m\le r$. Iterating $r-m$ times the inequality 
$\E[\log W^{(l-1)}_{\bar e}]\le \E[\log W^{(l)}_{e'}]-\log 2$
from \eqref{eq:bound2-exp-w-l-log} yields 
\begin{align}
      \label{eq:phase1-log}
   \E[\log W^{(m)}_{\tilde e}]&\le \E[\log W^{(r)}_{e'}]-(r-m)\log 2.
\end{align}
Iterating $m-l$ times the estimate 
$\E[\log W^{(l'-1)}_{\bar e}]+\cnull\le 2(\E[\log W^{(l')}_{e'}]+\cnull)$
from \eqref{eq:bound2-exp-w-l-log}, we obtain 
  \begin{align}
      \label{eq:phase2-log}
    \E[\log W^{(l)}_{\bar e}]+\cnull
    &\le 2^{m-l}(\E[\log W^{(m)}_{\tilde e}]+\cnull).
  \end{align}
  Inserting \eqref{eq:phase1-log}, the claim \eqref{eq:combined-log} follows.
  To identify a minimizer, set 
  $Y_m:=\E[\log W_{e'}]-(r-m)\log 2+\cnull=Y_{m-1}+\log 2$. Then, the
  argument of the minimum in \eqref{eq:combined-log} equals  $2^{m-l}Y_m$.
  The following are equivalent. 
  \begin{align}
   & 2^{m-l}Y_m < 2^{m-1-l}Y_{m-1}
    \;\Leftrightarrow\;
    Y_m<-\log 2
    \;\Leftrightarrow\;\\
    &m<r-1-(\log 2)^{-1}(\E[\log W^{(r)}_{e'}]+\cnull)
    \;\Leftrightarrow\;
      m\le r-2-\lfloor(\log 2)^{-1}(\E[\log W^{(r)}_{e'}]+\cnull)\rfloor
      \nonumber
  \end{align}
  The claim for the minimizer $m_1$ in \eqref{eq:combined-log} follows. 
 \end{proof}

 \subsection{Application to vrjp and errw}
 \label{subsec:application-to-vrjp-and-errw}

 In this section, we apply the recursive construction of effective random
 weights from the last section to restrictions of vrjp and errw on subdivided
 graphs.
In the following, we use the terms ``discrete-time process
associated with vrjp'' and its abbreviation ``discrete vrjp'' as synonyms.

 Since errw is a mixture of discrete vrjps with Gamma distributed
 i.i.d.\ weights, it makes sense not to start only with deterministic
 weights but more generally with i.i.d.\ weights, deterministic weights being a special case. 

 The next proof deals with an approximation of a possibly infinite graph by an
 increasing sequence of finite subgraphs. 
 
 \medskip\noindent
\begin{proof}[Proof of Theorem \ref{thm:vrjp-on-G-r-pos-rec}]
  Consider an increasing sequence $\Lambda^N\uparrow\Lambda$, $N\in\N_0$, of finite connected
  vertex sets in $G$ with $\rho\in\Lambda_0$. Let $G^N$ be the subgraph of $G$ with
  vertex set $\Lambda^N$ and edge set $E^N=\{e\in E:\; e\subseteq\Lambda^N\}$, and
  $G^N_r=(\Lambda^N_r,E^N_r)$ be the corresponding subdivided graph, where every edge in $E^N$ has been
  replaced by a series of $2^r$ edges. Consider the discrete vrjp $X^N$ on $G^N_r$ with
  random weights $W_e$, $e\in E^N_r$.  By Theorem \ref{thm:mixture-of-vrjps}, its
  restriction $(X^N)^{\Lambda_l^N\neq}$ to $G^N_l$ is a mixture of discrete vrjps 
  with random weights $W_{\bar e}^{(l)}$, $\bar e\in E_l^N$, fulfilling the
  properties in Lemma \ref{le:aux-W-l-1}. We emphasize that the finite-dimensional
  marginals $(W_{\bar e}^{(l)})_{\bar e\in F}$, $F\subseteq E_l$ finite, 
  do not depend on the size $N$ of the graph, whenever $N$ is large enough so
  that $F\subseteq E_l^N$. This allows us to take
  the same random variables $W_{\bar e}^{(l)}$ for all $N$. Moreover, by
  part~\ref{item5} of Lemma \ref{le:aux-W-l-1}, the family of weights $(W_e^{(l)})_{e\in E_l}$
  is independent or i.i.d., respectively, if $(W_e)_{e\in E}$ has this
  property. 

  Given $m\in\N$, consider only the
  first $m$ steps of the restrictions $X^{\Lambda_l\neq}$ and $(X^N)^{\Lambda_l^N\neq}$. 
  If $N$ is large enough, these two restrictions have the same law because they have
  no chance to enter $\Lambda_l\setminus\Lambda_l^N$. It follows that $X^{\Lambda_l\neq}$
  is a mixture of discrete vrjp with random weights $W_{\bar e}^{(l)}$, $\bar e\in E_l$
  because this is true for its restriction to any given number $m$ of steps. 
    Using the estimate
  \eqref{eq:phase1-alpha} from Theorem \ref{thm:from-r-to-l}
  and the assumption on $\E[W_e^\alpha]$ for some
  $\alpha\in(0,\frac14]$, we obtain for all $\bar e\in E_l$ and $e\in E_r$, 
  \begin{align}
    \E[(W^{(l)}_{\bar e})^\alpha]
    &\le (2^{-\alpha})^{r-l}\E[(W_e)^\alpha]\le\cdrei.
  \end{align}
  The claim follows from Fact \ref{fact:sabot-tarres-recurrence}. 
\end{proof}

Finally, the result for errw is obtained by specializing the i.i.d.\
weights to i.i.d.\ Gamma distributed weights as follows. 

\medskip\noindent
\begin{proof}[Proof of Theorem \ref{thm:Errw-on-subdivided-graphs}]
  By \cite[Theorem 1]{sabot-tarres2012}, the edge-reinforced random walk on $G_r$ is a
  mixture of the discrete vrjp with i.i.d.\ Gamma($a$,1)-distributed
  weights $W_e$, $e\in E_r$. We observe that for all $\alpha\ge 0$
  one has $\E[W_e^\alpha]=\Gamma(a+\alpha)/\Gamma(a)$. Consequently, the claim
  follows from Theorem \ref{thm:vrjp-on-G-r-pos-rec}.
\end{proof}

\section{Proofs for the non-linear hyperbolic sigma model}
\label{sec:susy-proofs}

Recall the setup of Section \ref{subsec:Non-linear-hyperbolic-supersymmetric-sigma-model}. 
Set $\hthreeplus:=\{\sigma=(x,y,z,\xi,\eta)\in\cA_0^3\times\cA_1^2:\sk{\sigma,\sigma}<0,
\body z>0\}$. 
For $\sigma\in\hthreeplus$, let $\|\sigma\|:=\sqrt{-\sk{\sigma,\sigma}}$. 
The following lemma describes the super-Laplace transform of the canonical
superintegration form $\cD\sigma$ on $\htwo$. 
Recall from Section \ref{subsec:Non-linear-hyperbolic-supersymmetric-sigma-model}
that the graph $(\Lambda,E_+)$ with edge set
$E_+=\{\{i,j\}\subseteq\Lambda:W_{ij}>0\}$ is connected, which implies
that at least one $W_{1i}$ is strictly positive for every vertex $1\in\Lambda$.

\begin{lemma}[Integration of one variable]
  \label{le:integrate-one-spin}
  Let $1\in\Lambda$ and set $1^c=\Lambda\setminus\{1\}$.
  One has
  \begin{align}
    \label{eq:integrate-one-var-isolated}
    \int_{\htwo}\cD\sigma_1\, e^{\sum_{i\in 1^c}W_{1i}\sk{\sigma_1,\sigma_i}}
    =e^{-\|\sum_{i\in 1^c}W_{1i}\sigma_i\|},
  \end{align}
  and consequently, 
\begin{align}
  \label{eq:integrate-one-var}
  \int_{\htwo}\cD\sigma_1\,
    e^{\frac12\sum_{i,j\in\Lambda}W_{ij}(1+\sk{\sigma_i,\sigma_j})}
      =& e^{\frac12\sum_{i,j\in 1^c}W_{ij}(1+\sk{\sigma_i,\sigma_j})
    +\sum_{i\in 1^c}W_{1i}-\|\sum_{i\in 1^c}W_{1i}\sigma_i\|}.
\end{align}
\end{lemma}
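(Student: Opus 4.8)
The plan is to reduce \eqref{eq:integrate-one-var} to the isolated identity \eqref{eq:integrate-one-var-isolated} and then to establish \eqref{eq:integrate-one-var-isolated} by an explicit single-site computation. For the reduction, I would split the exponent $\frac12\sum_{i,j\in\Lambda}W_{ij}(1+\sk{\sigma_i,\sigma_j})$ into the part with $i,j\in 1^c$, which is constant under the $\sigma_1$-integration, and the part containing the index $1$. Using symmetry of $W$ and of $\sk{\cdot,\cdot}$, together with the crucial fact that $\sk{\sigma_1,\sigma_1}=-1$ on $\htwo$, so that the putative self-term $\frac12 W_{11}(1+\sk{\sigma_1,\sigma_1})$ vanishes, the index-$1$ part collapses to $\sum_{i\in 1^c}W_{1i}(1+\sk{\sigma_1,\sigma_i})=\sum_{i\in1^c}W_{1i}+\sum_{i\in 1^c}W_{1i}\sk{\sigma_1,\sigma_i}$. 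Pulling the $\sigma_1$-independent factor $e^{\frac12\sum_{i,j\in1^c}W_{ij}(1+\sk{\sigma_i,\sigma_j})+\sum_{i\in1^c}W_{1i}}$ out of the integral reduces \eqref{eq:integrate-one-var} to \eqref{eq:integrate-one-var-isolated}.

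For \eqref{eq:integrate-one-var-isolated}, abbreviate the external field $v:=\sum_{i\in 1^c}W_{1i}\sigma_i=(a,b,c,\gamma,\delta)$, so the exponent equals $\sk{\sigma_1,v}=xa+yb-zc+\xi\delta-\eta\gamma$ with $\sigma_1=(x,y,z,\xi,\eta)$, and the goal is $\int_{\htwo}\cD\sigma_1\,e^{\sk{\sigma_1,v}}=e^{-\|v\|}$ with $\|v\|=\sqrt{c^2-a^2-b^2-2\gamma\delta}$. First I would carry out the Berezin integration over the odd variables $\xi,\eta$ of $\sigma_1$. Writing $z=z_0+\xi\eta/z_0$ and $z^{-1}=z_0^{-1}-\xi\eta\,z_0^{-3}$ with $z_0=\sqrt{1+x^2+y^2}$, and expanding $e^{\sk{\sigma_1,v}}$ (only terms up to second order in $\xi,\eta$ survive), one extracts the coefficient of $\xi\eta$ and applies $\partial_\xi\partial_\eta$; this turns the super-integral into the ordinary two-dimensional integral
\[
I=\frac{1}{2\pi}\int_{\R^2}dx\,dy\,e^{ax+by-cz_0}\Big(\frac{1}{z_0^3}+\frac{c}{z_0^2}+\frac{\gamma\delta}{z_0}\Big).
\]

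To evaluate $I$ I would first take $a,b,c$ to be real with $c>0$ and $c^2>a^2+b^2$, and expand in the nilpotent even quantity $\gamma\delta$ (which squares to zero), writing $I=I_0+\gamma\delta\,I_1$. The term $I_0=\int_{\htwo}\cD\sigma\,e^{ax+by-cz}$ is invariant under the isometries of $\htwo$ (invariance of $\cD\sigma$, to be cited from \cite{disertori-spencer-zirnbauer2010}, \cite{phdthesis-swan2020}), while $I_1=\frac{1}{2\pi}\int_{\R^2}\frac{dx\,dy}{z_0}\,e^{ax+by-cz_0}$ is the integral of $e^{ax+by-cz_0}$ against the $SO^+(2,1)$-invariant measure $dx\,dy/z_0$ on the bosonic hyperboloid; both depend only on the Lorentz invariant $\mu_0:=\sqrt{c^2-a^2-b^2}$. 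Boosting the forward timelike vector $(a,b,c)$ to $(0,0,\mu_0)$ reduces both to radial integrals: the substitution $z_0\,dz_0=r\,dr$ gives $I_0=\int_1^\infty(z_0^{-2}+\mu_0 z_0^{-1})e^{-\mu_0 z_0}\,dz_0=e^{-\mu_0}$ and $I_1=\int_1^\infty e^{-\mu_0 z_0}\,dz_0=e^{-\mu_0}/\mu_0$. Hence $I=e^{-\mu_0}(1+\gamma\delta/\mu_0)=e^{-\sqrt{\mu_0^2-2\gamma\delta}}=e^{-\|v\|}$. Since both sides are real-analytic in $(a,b,c)$ on the forward cone, the identity extends from real arguments to even-supernumber arguments by the superfunction (Taylor) extension, and one may then specialize to $v=\sum_{i\in1^c}W_{1i}\sigma_i$; here connectivity of $(\Lambda,E_+)$ guarantees $\body(c)>0$ and $\body(\mu_0^2)>0$, so the specialization lands in the admissible region.

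The main obstacle is not the radial integrals, which are elementary, but the organization of the Grassmann bookkeeping and the two justifications that make the symmetry reduction rigorous: the invariance of the superintegration form $\cD\sigma$ under the relevant isometries of $\htwo$, and the passage from a real external field to a supernumber-valued one via analytic/superfunction extension. The concrete computational core throughout is the one-variable identity $\int_{\htwo}\cD\sigma\,e^{-sz}=e^{-s}$, valid first for $s>0$ and then for even supernumbers $s$ with positive body.
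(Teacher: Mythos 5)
Your proposal is correct, and the reduction of \eqref{eq:integrate-one-var} to \eqref{eq:integrate-one-var-isolated} via the exponent decomposition and $\sk{\sigma_1,\sigma_1}=-1$ is exactly what the paper does. Where you diverge is on the core identity $\int_{\htwo}\cD\sigma_1\,e^{\sk{\sigma_1,v}}=e^{-\|v\|}$: the paper simply observes that $v=\sum_{i\in 1^c}W_{1i}\sigma_i$ lies in $\hthreeplus$ by convexity of that cone and then cites the second equality in (3.1) of \cite[Lemma 3.2]{disertori-merkl-rolles2020}, whereas you re-derive the identity from scratch. Your derivation checks out: the Berezin integration yields the correct bosonic integrand $z_0^{-3}+c\,z_0^{-2}+\gamma\delta\,z_0^{-1}$ (your normalization is confirmed by the special case $v=(0,0,s,0,0)$, where $\int_1^\infty(z_0^{-2}+s z_0^{-1})e^{-sz_0}dz_0=e^{-s}$), the radial integrals give $I_0=e^{-\mu_0}$ and $I_1=e^{-\mu_0}/\mu_0$, and $e^{-\sqrt{\mu_0^2-2\gamma\delta}}=e^{-\mu_0}(1+\gamma\delta/\mu_0)$ since $(\gamma\delta)^2=0$ (which holds for the general odd elements $\gamma=\sum_i W_{1i}\xi_i$, $\delta=\sum_i W_{1i}\eta_i$, not only for single generators, because $\gamma^2=\delta^2=0$). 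The two points you flag are indeed the only places where care is needed: the Lorentz invariance of $I_1$ follows from the invariance of the bosonic measure $dx\,dy/z_0$ alone, but for $I_0$ the extra factor $c/z_0+1/z_0^2$ is not separately invariant, so you genuinely need the invariance of the full superintegration form $\cD\sigma$ under the (lift of the) $SO^+(2,1)$ action, which must be cited from \cite{disertori-spencer-zirnbauer2010} or \cite{phdthesis-swan2020}; and the passage from real $(a,b,c)$ to even supernumbers with positive body requires the standard Taylor-expansion-in-nilpotents argument, with admissibility guaranteed exactly as the paper says by $\sum_{i\in1^c}W_{1i}\sigma_i\in\hthreeplus$. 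What your route buys is a self-contained proof that makes the mechanism of the identity visible; what the paper's route buys is brevity, at the cost of outsourcing the entire computation to the earlier reference.
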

\begin{proof}
  By the convexity of $\hthree_+$, one has
    $\sum_{i\in 1^c}W_{1i}\sigma_i\in\hthree_+$ and hence 
    \cite[Lemma 3.2, second equality in (3.1)]{disertori-merkl-rolles2020}
    is applicable and yields 
  \begin{align}
    \int_{\htwo}\cD\sigma_1\, e^{\sum_{i\in 1^c}W_{1i}\sk{\sigma_1,\sigma_i}}
    =& \int_{\htwo}\cD\sigma_1\, e^{\sk{\sigma_1,\sum_{i\in 1^c}W_{1i}\sigma_i}}
       =e^{-\|\sum_{i\in 1^c}W_{1i}\sigma_i\|}.
  \end{align}
  The second claim \eqref{eq:integrate-one-var} follows from
  \eqref{eq:integrate-one-var-isolated} using $\sk{\sigma_1,\sigma_1}=-1$
  and decomposing the exponent as follows 
  \begin{align}
    \frac12\sum_{i,j\in\Lambda}W_{ij}(1+\sk{\sigma_i,\sigma_j})
    =\frac12\sum_{i,j\in 1^c}W_{ij}(1+\sk{\sigma_i,\sigma_j})
    +\sum_{i\in 1^c}W_{1i}+\sum_{i\in 1^c}W_{1i}\sk{\sigma_1,\sigma_i}.
  \end{align}
\end{proof}

This lemma is the key ingredient for analyzing the restriction of
the $\htwo$ model. 

\medskip\noindent
\begin{proof}[Proof of Theorem \ref{thm:mixture-of-htwo}]
    The second equation in \eqref{eq:mixture-of-h22} follows from the
    restriction property of the betas; see Remark
    \ref{rem:restriction-conditioning-property-beta}. 
    The proof of the first equation in \eqref{eq:mixture-of-h22} is by
    induction with respect to the
  cardinality of $I$. For $I=\emptyset$, there is nothing to prove.
  As induction hypothesis, assume that formula \eqref{eq:mixture-of-h22} holds
  for given $I\subsetneq\Lambda\setminus\{\rho\}$ and $J=\Lambda\setminus I$. For the
  induction step, take $i\in J$, $i\neq\rho$, 
  and set $\tilde I=I\cup\{i\}$, $\tilde J=\Lambda\setminus\tilde I=J\setminus\{i\}$.
  For any superfunction $f$ on $(\htwo)^{\tilde J}$ being compactly supported
  or at least sufficiently fast
  decaying so that the integral on the left-hand side of \eqref{eq:first-step-induction} is
  well-defined, the induction hypothesis yields
  \begin{align}
    \label{eq:first-step-induction}
    \int_{(\htwo)^\Lambda}\mu_\Lambda^W(\sigma_\Lambda)f(\sigma_{\tilde J})=
    \int_{\R^\Lambda}\nu_\Lambda^W(d\beta)\int_{(\htwo)^J}\mu_J^{W^J(\beta_I)}(\sigma_J)f(\sigma_{\tilde J}).
  \end{align}
  We fix $\beta\in\R^\Lambda$, abbreviate $W^J=W^J(\beta_I)$ when there
  is no risk of confusion, and set
  $g(\sigma_{\tilde J})=e^{\frac12\sum_{j,k\in\tilde J}W^J_{jk}(1+\sk{\sigma_j,\sigma_k})}
  f(\sigma_{\tilde J})$.
  We split the integrand into a part which does not involve
  $\sigma_i$ and the remaining part involving $\sk{\sigma_i,\sigma_j}$,
  $j\in\tilde J$: 
  \begin{align}
    \label{eq:split}
&    \int_{(\htwo)^J}\mu_J^{W^J(\beta_I)}(\sigma_J)f(\sigma_{\tilde J})
    =\int_{(\htwo)^J}\cD\sigma_J\, e^{\frac12\sum_{j,k\in J}W^J_{ij}(1+\sk{\sigma_j,\sigma_k})}
                       f(\sigma_{\tilde J})\nonumber\\
    &=\int_{(\htwo)^{\tilde J}}\cD\sigma_{\tilde J}\,g(\sigma_{\tilde J})
    \int_{\htwo}\cD\sigma_i
      e^{\sum_{j\in\tilde J}W^J_{ij}(1+\sk{\sigma_i,\sigma_j})}.
  \end{align}
  Note that the term $W^J_{ii}(1+\sk{\sigma_i,\sigma_i})=0$
  has been dropped. 
  By formula~\eqref{eq:integrate-one-var-isolated} in Lemma~\ref{le:integrate-one-spin}
  and using the abbreviation 
  $W^J_i:=\sum_{j\in \tilde J}W^J_{ij}
  =\sum_{j\in J\setminus\{i\}}W^J_{ij}$, the single-spin integral in the last
  expression equals
  \begin{align}
    &\int_{\htwo}\cD\sigma_i
    e^{\sum_{j\in\tilde J}W^J_{ij}(1+\sk{\sigma_i,\sigma_j})}
      =e^{W^J_i-\left\|\sum_{j\in \tilde{J}}W^J_{ij}\sigma_j\right\|}
      =e^{W^J_i(1-\|\sum_{j\in \tilde{J}}
    \frac{W^J_{ij}}{W^J_i}\sigma_j\|)}.
    \label{eq:Integrate-single-spin}
  \end{align}
  Using auxiliary random variables 
  $X\sim\ig\left(\frac{W^J_i}{2},\frac{(W^J_i)^2}{2}\right)$
  and $Y=\frac{2X}{(W^J_i)^2}\sim
  \ig\left((W^J_i)^{-1},1\right)$
  with inverse Gaussian distributions, cf.\ Appendix \ref{sec:facts-IG}, 
  on some auxiliary probability space with expectation operator
  denoted by $E$, Lemma~\ref{le:laplace-IG} allows us to rewrite
  the last expression in the form
  \begin{align}
    \eqref{eq:Integrate-single-spin}
    &=E\left[ e^{X(1-\|\sum_{j\in \tilde{J}}\frac{W^J_{ij}}{W^J_i}\sigma_j\|^2)}\right]
      =E\left[ e^{
      \frac{X}{(W^J_i)^2}
      \left(
      (W^J_i)^2+\sum_{j,k\in\tilde{J}}W^J_{ij}W^J_{ik}
      \sk{\sigma_j,\sigma_k}
      \right)}
    \right]
      \nonumber\\&=
      E\left[e^{\frac{Y}{2}
      \sum_{j,k\in\tilde{J}}W^J_{ij}W^J_{ik}
      \left(1+\sk{\sigma_j,\sigma_k}\right)}\right]. 
  \end{align}
  Using the notation \eqref{eq:def-H-beta}, we take the specific choice $(\R^J,\mathcal{B}(\R^J),
  \nu_J^{W^J}(d\tilde \beta))$ and
  $Y=(2\tilde{\beta_i}-W^J_{ii})^{-1}=([H_{J,\tilde\beta}^{W^J}]_{ii})^{-1}$ for the auxiliary probability space and the auxiliary random variable $Y$, respectively.
  Indeed, $Y\sim\ig((W^J_i)^{-1},1)$ for this choice
  is a consequence \eqref{eq:marginal-beta-IG}.
  Summarizing, this shows
  \begin{align}
    \int_{\htwo}\cD\sigma_i
      e^{\sum_{j\in\tilde J}W^J_{ij}(1+\sk{\sigma_i,\sigma_j})}
    =&
      \int_{\R^J} \nu_J^{W^J}(d\tilde \beta)
      e^{\frac{1}{2}
      \sum_{j,k\in\tilde{J}}\frac{W^J_{ij}W^J_{ik}}{2\tilde{\beta_i}-W^J_{ii}}
      \left(1+\sk{\sigma_j,\sigma_k}\right)}.
  \end{align}
  We substitute this in
  \eqref{eq:split} to obtain
  \begin{align}
    &\int_{(\htwo)^{\tilde J}}\mu_J^{W^J(\beta_I)}(\sigma_J)f(\sigma_{\tilde J})
    =\int_{(\htwo)^{\tilde J}}\cD\sigma_{\tilde J}\,g(\sigma_{\tilde J})
      \int_{\R^J} \nu_J^{W^J}(d\tilde \beta)
      e^{\frac{1}{2}
      \sum_{j,k\in\tilde{J}}\frac{W^J_{ij}W^J_{ik}}{2\tilde{\beta_i}-W^J_{ii}}
    \left(1+\sk{\sigma_j,\sigma_k}\right)}
      \nonumber
    \\&=
    \int_{\R^J} \nu_J^{W^J}(d\tilde \beta)
    \int_{(\htwo)^{\tilde J}}\cD\sigma_{\tilde J}\,f(\sigma_{\tilde J})
    e^{\frac{1}{2}
    \sum_{j,k\in\tilde{J}}
    \left[W^J_{jk}+\frac{W^J_{ji}W^J_{ik}}{2\tilde{\beta_i}-W^J_{ii}}\right]
    \left(1+\sk{\sigma_j,\sigma_k}\right)}\nonumber\\
    &=\int_{\R^J} \nu_J^{W^J}(d\tilde \beta)
      \int_{(\htwo)^{\tilde J}}\mu_{\tilde J}^{W^{\tilde J}(\beta_I,\tilde\beta_i)}
      (\sigma_{\tilde J})\, f(\sigma_{\tilde J})
  \end{align}
because 
  \begin{align}
    \left(W^J_{jk}+
    \frac{W^J_{ji}W^J_{ik}}{2\tilde{\beta_i}-W^J_{ii}}
    \right)_{j,k\in\tilde J}
    =&W^J_{\tilde{J}\tilde{J}}
    +W^J_{\tilde{J}i}\left[\left(H_{J,\tilde\beta}^{W^J}\right)_{ii}\right]^{-1}
    W^J_{i\tilde{J}}\nonumber\\
    =&(W^J)^{\tilde{J}}(\tilde\beta_i)=W^{\tilde J}(\beta_I,\tilde\beta_i), 
  \end{align}
   where we used $W^J=W^J(\beta_I)$ and \eqref{eq:W-tilde-J-W-J} with $J\setminus\tilde J=\{i\}$. 
  Taking now $\beta$ random, we insert the result in
  \eqref{eq:first-step-induction} and obtain
  \begin{align}
    \int_{(\htwo)^\Lambda}\mu_\Lambda^W(\sigma_\Lambda)f(\sigma_{\tilde J})=
    \int_{\R^\Lambda}\nu_\Lambda^W(d\beta)
\int_{\R^J} \nu_J^{W^J(\beta_I)}(d\tilde \beta)
      \int_{(\htwo)^{\tilde J}}\mu_{\tilde J}^{W^{\tilde J}(\beta_I,\tilde\beta_i)}
    (\sigma_{\tilde J})\, f(\sigma_{\tilde J}).
  \end{align}
  By the conditioning property cited in Remark \ref{rem:restriction-conditioning-property-beta}, the conditional law of $\beta_J$ 
  given $\beta_I$ with respect to $\nu_\Lambda^W$ equals $\nu_J^{W^J(\beta_I)}$.
  In other words, for any integrable function $h(\beta_I,\beta_J)$,
  one has 
  \begin{align}
    \int_{\R^\Lambda}\nu_\Lambda^W(d\beta)
    \int_{\R^J}\nu_J^{W^J(\beta_I)}(d\tilde\beta)\, h(\beta_I,\tilde\beta)
    =  \int_{\R^\Lambda}\nu_\Lambda^W(d\beta)\, h(\beta_I,\beta_J).
  \end{align}
  Recall $i\in J$. Applying the last identity with
  \begin{align}
    h(\beta_I,\beta_J)=\int_{(\htwo)^{\tilde J}}\mu_{\tilde J}^{W^{\tilde J}(\beta_I,(\beta_J)_i)}
    (\sigma_{\tilde J})\, f(\sigma_{\tilde J}),
  \end{align}
  and observing $(\beta_I,(\beta_J)_i)=\beta_{\tilde I}$, 
  we conclude the induction step with the calculation
  \begin{align}
    &\int_{(\htwo)^\Lambda}\mu_\Lambda^W(\sigma_\Lambda)f(\sigma_{\tilde J})
      =\int_{\R^\Lambda}\nu_\Lambda^W(d\beta)\int_{(\htwo)^{\tilde J}}\mu_{\tilde J}^{W^{\tilde J}(\beta_{\tilde I})}(\sigma_{\tilde J})f(\sigma_{\tilde J}).
  \end{align}
\end{proof}

 \begin{appendix}
   \section{Facts about inverse Gaussians}
   \label{sec:facts-IG}
The density of an inverse Gaussian distribution $\ig(\mu,\lambda)$
with parameters $\mu,\lambda>0$ is given by
\begin{align}
  \label{eq:density-IG}
  f(x)=\sqrt{\frac{\lambda}{2\pi x^3}}
  \exp\left(-\frac{\lambda(x-\mu)^2}{2\mu^2x}\right), \quad x>0. 
\end{align}
For $X\sim \ig(\mu,\lambda)$, one has $E[X]=\mu$ and the Laplace transform is given by
\begin{align}
  \label{eq:laplace-trafo-IG}
  \phi(t)=E[e^{tX}]
  =e^{\frac{\lambda}{\mu}\left(1-\sqrt{1-\frac{2\mu^2t}{\lambda}}\right)},
  \quad t\le\frac{\lambda}{2\mu^2}. 
\end{align}
The following special case of the Laplace transform is used in the
proof of Theorem \ref{thm:mixture-of-htwo}. 

\begin{lemma}[Laplace transform]
  \label{le:laplace-IG}
  If $X\sim \ig(\frac{a}{2},\frac{a^2}{2})$ for some $a>0$, then
  \begin{align}
    E[e^{(1-s)X}]=e^{a(1-\sqrt s)}, \quad s\ge 0. 
  \end{align}
\end{lemma}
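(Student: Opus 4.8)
The plan is to derive the claim by directly specializing the general Laplace-transform formula \eqref{eq:laplace-trafo-IG} to the parameters $\mu=\tfrac a2$ and $\lambda=\tfrac{a^2}{2}$, evaluated at the argument $t=1-s$. First I would record the two parameter combinations that appear in \eqref{eq:laplace-trafo-IG}, namely the prefactor $\frac{\lambda}{\mu}=\frac{a^2/2}{a/2}=a$ and the coefficient of $t$ under the square root, $\frac{2\mu^2}{\lambda}=\frac{2(a/2)^2}{a^2/2}=1$.

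Next, inserting $t=1-s$ I would observe that the radicand collapses to $1-\frac{2\mu^2 t}{\lambda}=1-(1-s)=s$, so that \eqref{eq:laplace-trafo-IG} reads $E[e^{(1-s)X}]=e^{a(1-\sqrt s)}$, which is precisely the asserted identity. This is a one-line substitution into an already established transform, so there is no genuine obstacle in the argument.

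The only point deserving a moment of care is the range of validity. Formula \eqref{eq:laplace-trafo-IG} holds for $t\le\frac{\lambda}{2\mu^2}$, and here $\frac{\lambda}{2\mu^2}=1$; thus the constraint $t=1-s\le 1$ is equivalent to $s\ge 0$, matching exactly the hypothesis of the lemma. Hence the identity is valid on the full stated range, and I would conclude the proof at this point.
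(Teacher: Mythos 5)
Your argument is correct and coincides with the paper's own proof: both simply substitute $\mu=\tfrac a2$, $\lambda=\tfrac{a^2}{2}$, $t=1-s$ into \eqref{eq:laplace-trafo-IG} using $\tfrac{\lambda}{\mu}=a$ and $\tfrac{2\mu^2}{\lambda}=1$. Your extra remark checking that the validity range $t\le\lambda/(2\mu^2)=1$ matches $s\ge 0$ is a welcome bit of care that the paper leaves implicit.
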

\begin{proof}
  Setting $\mu=\frac{a}{2}$ and $\lambda=\frac{a^2}{2}$, this follows from
  \eqref{eq:laplace-trafo-IG} using $\frac{\lambda}{\mu}=a$ and 
  $\frac{2\mu^2}{\lambda}=1$. 
\end{proof}

Our estimates for the effective weights in Theorem \ref{thm:from-r-to-l} require the
estimates in the following Lemmas~\ref{le:bound-exp-X-alpha} and \ref{le:estimate-log-IG}. 

\begin{lemma}[Estimates for some moments]
  \label{le:bound-exp-X-alpha}
  For $W>0$ and $X_W\sim \ig(W^{-1},1)$, with the constant $C_\alpha$ from Theorem
  \ref{thm:from-r-to-l}, one has
  \begin{align}
    \label{eq:bound-exp-X-hoch-alpha}
    E[X_W^\alpha]\le & W^{-\alpha}\hspace{2.3cm}\text{ for }\alpha\in[0,1], \\
    E[X_W^\alpha]\le &  C_\alpha
                       \quad\text{ for }\alpha\in[0,\tfrac12), \qquad
                       \lim_{W\downarrow 0}E[X_W^\alpha]=C_\alpha. 
                       \label{eq:bound-exp-X-hoch-alpha-C-alpha}
  \end{align}
\end{lemma}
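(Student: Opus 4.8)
The plan is to treat the two ranges of $\alpha$ separately, using only the density \eqref{eq:density-IG} and the mean formula $E[X]=\mu$ recorded just below it. For $\alpha\in[0,1]$ the bound \eqref{eq:bound-exp-X-hoch-alpha} is immediate from Jensen's inequality: the map $t\mapsto t^\alpha$ is concave on $[0,\infty)$, and $E[X_W]=W^{-1}$ since $X_W\sim\ig(W^{-1},1)$, so $E[X_W^\alpha]\le (E[X_W])^\alpha=W^{-\alpha}$. For $\alpha\in\{0,1\}$ this is even an equality.

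For $\alpha\in[0,\tfrac12)$ I would first write the density explicitly. Substituting $\mu=W^{-1}$, $\lambda=1$ in \eqref{eq:density-IG} and expanding the square gives $f_W(x)=(2\pi x^3)^{-1/2}\exp\!\big(W-\tfrac12 W^2 x-\tfrac1{2x}\big)$, hence $E[X_W^\alpha]=\frac{1}{\sqrt{2\pi}}\int_0^\infty x^{\alpha-3/2}\exp(W-\tfrac12 W^2x-\tfrac1{2x})\,dx$. To identify the limit in \eqref{eq:bound-exp-X-hoch-alpha-C-alpha}, note that for $W\le 1$ the integrand is dominated by $e\,x^{\alpha-3/2}e^{-1/(2x)}$, which is integrable precisely because $\alpha<\tfrac12$: the factor $x^{\alpha-3/2}$ is integrable at infinity iff $\alpha<\tfrac12$, while $e^{-1/(2x)}$ kills the singularity at $0$. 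Dominated convergence then gives $\lim_{W\downarrow 0}E[X_W^\alpha]=\frac{1}{\sqrt{2\pi}}\int_0^\infty x^{\alpha-3/2}e^{-1/(2x)}\,dx$, and the substitution $u=1/(2x)$ turns this into a Gamma integral: it equals $\frac{2^{1/2-\alpha}}{\sqrt{2\pi}}\int_0^\infty u^{-\alpha-1/2}e^{-u}\,du=\frac{2^{-\alpha}}{\sqrt{\pi}}\Gamma(\tfrac12-\alpha)=C_\alpha$. This proves the limit assertion.

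It then remains to upgrade the limit to the uniform bound $E[X_W^\alpha]\le C_\alpha$ for every $W>0$, which I expect to be the main point. The cleanest route is a monotonicity argument: for $0<W_1<W_2$ the likelihood ratio is $f_{W_1}(x)/f_{W_2}(x)=\exp\!\big((W_1-W_2)-\tfrac12(W_1^2-W_2^2)x\big)$, which is strictly increasing in $x$ because $W_1^2-W_2^2<0$. Thus the family $(X_W)_{W>0}$ has monotone likelihood ratio, so $X_{W_1}$ stochastically dominates $X_{W_2}$; since $t\mapsto t^\alpha$ is non-decreasing, $W\mapsto E[X_W^\alpha]$ is non-increasing. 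Combined with the previous paragraph, $E[X_W^\alpha]\le\lim_{W'\downarrow 0}E[X_{W'}^\alpha]=C_\alpha$ for all $W>0$, giving \eqref{eq:bound-exp-X-hoch-alpha-C-alpha}.

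The genuinely routine steps are the Jensen bound and the Gamma-integral evaluation; the only points requiring care are the integrability threshold at $\alpha=\tfrac12$ (where the limiting law, a L\'evy/one-sided-stable distribution, loses its $\alpha$-th moment) and the monotonicity in $W$. I would present monotonicity via the likelihood-ratio computation above rather than by differentiating under the integral sign, since the latter leads to the less transparent inequality $E[X_W^\alpha]\le W\,E[X_W^{\alpha+1}]$. If one preferred an exact-formula approach, the same integral equals $\sqrt{2/\pi}\,e^{W}W^{1/2-\alpha}K_{1/2-\alpha}(W)$ with a modified Bessel function, but then the bound would rest on monotonicity of this Bessel expression, which is less elementary than the likelihood-ratio argument.
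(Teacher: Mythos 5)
Your proof is correct, and for the key step --- the monotonicity of $W\mapsto E[X_W^\alpha]$ --- it takes a genuinely different route from the paper. The Jensen argument for $\alpha\in[0,1]$ and the dominated-convergence computation of $\lim_{W\downarrow 0}E[X_W^\alpha]$ (with the same dominating function $e\,x^{\alpha-3/2}e^{-1/(2x)}$ and the same Gamma-integral evaluation of $C_\alpha$) coincide with what the paper does. Where you diverge is in showing $E[X_W^\alpha]\le\lim_{W'\downarrow 0}E[X_{W'}^\alpha]$: the paper differentiates $f_\alpha(W)=E[X_W^\alpha]$ under the integral sign, applies the substitutions $y=Wx$ and $z=1/y$, averages the two resulting expressions, and concludes $f_\alpha'(W)\le 0$ from the pointwise inequality $(y^\alpha-y^{-\alpha})(1-y)\le 0$. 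Your monotone-likelihood-ratio argument --- $f_{W_1}/f_{W_2}$ increasing in $x$ for $W_1<W_2$, hence $X_{W_1}$ stochastically dominates $X_{W_2}$, hence $W\mapsto E[g(X_W)]$ is non-increasing for every non-decreasing $g$ --- is correct (the expansion of the exponent checks out, and MLR implies first-order stochastic dominance by the standard argument) and buys two things: it avoids justifying differentiation under the integral sign, and it gives monotonicity for arbitrary non-decreasing test functions rather than only for $t\mapsto t^\alpha$; in particular it would also yield the upper bound $E[\log X_W]\le\cnull$ of Lemma \ref{le:estimate-log-IG} for free. The paper's symmetrization is more computational but self-contained within the specific integral at hand. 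Both are complete proofs of the stated lemma.
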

\begin{proof}
  Using Jensen's inequality for the concave function $x^\alpha$,
  we obtain $E[X_W^\alpha]\le E[X_W]^\alpha=W^{-\alpha}$ for $0\le \alpha\le 1$. 

  We prove now the second bound. 
  By \eqref{eq:density-IG}, one has 
  \begin{align}
    \label{eq:exp-x-hoch-alpha-with-density}
    f_\alpha(W):=E[X_W^\alpha]=\frac{1}{\sqrt{2\pi}}\int_0^\infty
    x^{\alpha-\frac32}e^{-\frac{(Wx-1)^2}{2x}}\, dx
  \end{align}
  for all $\alpha\in[0,\frac12)$. 
  Taking the derivative with respect to $W$ and
  substituting first $y=Wx$ and then $z=1/y$ it follows
\begin{align}
    f_\alpha'(W)=&\frac{1}{\sqrt{2\pi}}\int_0^\infty
    x^{\alpha-\frac32}(1-Wx)e^{-\frac{W}{2}(Wx+(Wx)^{-1}-2)}
                   \, dx
  \nonumber\\
  =&\frac{W^{\frac12-\alpha}}{\sqrt{2\pi}}\int_0^\infty
    y^{\alpha-\frac32}(1-y)
     e^{-\frac{W}{2}(y+y^{-1}-2)}\, dy
     \nonumber\\
=&-\frac{W^{\frac12-\alpha}}{\sqrt{2\pi}}\int_0^\infty
    z^{-\alpha-\frac32}(1-z)
   e^{-\frac{W}{2}(z+z^{-1}-2)}\, dz
  .
\end{align}
Taking the average of the last two expressions and using
$(y^\alpha-y^{-\alpha})(1-y)\le 0$ for $y\ge 0$ and $\alpha>0$ yields 
\begin{align}
  f_\alpha'(W)
  =&\frac{W^{\frac12-\alpha}}{2\sqrt{2\pi}}\int_0^\infty
    y^{-\frac32}(y^\alpha-y^{-\alpha})(1-y)
    e^{-\frac{W}{2}(y+y^{-1}-2)}\, dy
     \le 0 .
\end{align}
Hence, the function $f_\alpha$ is decreasing, which implies
$f_\alpha(W)\le \lim_{w\downarrow 0}f_\alpha(w)$ for all $W>0$.
For $0\le \alpha<\frac12$,
we evaluate this limit as follows. Using
dominated convergence 
with the integrable upper bound $x^{\alpha-\frac32} e^{1-\frac{1}{2x}}$
for the integrand in \eqref{eq:exp-x-hoch-alpha-with-density} with
$0<W\le 1$ and then substituting $y=(2x)^{-1}$ , we conclude
\begin{align}
f_\alpha(W)\le \lim_{w\downarrow 0}f_\alpha(w)
=\frac{1}{\sqrt{2\pi}}\int_0^\infty
  x^{\alpha-\frac32} e^{-\frac{1}{2x}}\, dx
=\frac{1}{2^\alpha\sqrt{\pi}}\int_0^\infty
  y^{-\alpha-\frac12} e^{-y}\, dy
  =C_\alpha.
\end{align}
\end{proof}

\begin{lemma}[Estimate for the logarithmic moment]
  \label{le:estimate-log-IG}
  \mbox{}\\
  For $W>0$ and $X_W\sim \ig(W^{-1},1)$, one has 
  \begin{align}
    \label{eq:exp-log-X}
    E[\log X_W]=&-\log W-\int_0^\infty\frac{e^{-u}}{u+2W}\, du
                  =e^{2W}\int_0^{2W}(\log t+\gamma)e^{-t}\, dt+\cnull
  \end{align}
  with $\cnull$ and $\gamma$ specified in Theorem \ref{thm:from-r-to-l}.
  Furthermore, the following bounds hold
  \begin{align}
    \label{eq:upper-bound-log-min}
    -\log(W+\tfrac12)\le E[\log X_W]&\le\min\{-\log W,\cnull\}\quad\text{for }W>0,\\
    \label{eq:bound-log-X-small-W}
    \cnull+4W(\log W+\cnull-1) \le E[\log X_W]&\le\cnull\quad\text{for }0<W\le\tfrac12 e^{-\gamma},\\
    \label{eq:exp-log-X-lim-W-to-0}
    \lim_{W\to 0}E[\log X_W]&=\cnull.
  \end{align}
\end{lemma}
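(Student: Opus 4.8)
The plan is to reduce everything to the explicit Laplace transform \eqref{eq:laplace-trafo-IG}. Taking $\mu=W^{-1}$, $\lambda=1$, and $t=-s$ there gives $E[e^{-sX_W}]=e^{W-\sqrt{W^2+2s}}$ for $s\ge 0$. Combining this with the Frullani representation $\log x=\int_0^\infty s^{-1}(e^{-s}-e^{-sx})\,ds$ and Fubini (justified because $X_W$ has a finite first moment and a finite inverse moment, so that $\int_0^\infty s^{-1}\lvert e^{-s}-e^{-sX_W}\rvert\,ds$ is integrable), I obtain the representation
\[
g(W):=E[\log X_W]=\int_0^\infty\frac{e^{-s}-e^{W-\sqrt{W^2+2s}}}{s}\,ds .
\]
I would then differentiate under the integral sign; the troublesome factor $1/s$ disappears because $\partial_W e^{W-\sqrt{W^2+2s}}=(1-W/\sqrt{W^2+2s})e^{W-\sqrt{W^2+2s}}$ and $\tfrac1s(1-\tfrac{W}{\sqrt{W^2+2s}})=\tfrac{2}{\sqrt{W^2+2s}\,(\sqrt{W^2+2s}+W)}$. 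The substitution $v=\sqrt{W^2+2s}$ collapses the result to $g'(W)=-2e^{W}\int_W^\infty\frac{e^{-v}}{v+W}\,dv=-2e^{2W}E_1(2W)=-2\int_0^\infty\frac{e^{-u}}{u+2W}\,du$, where $E_1$ denotes the exponential integral.

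Next I would set $\phi(W)=-\log W-\int_0^\infty\frac{e^{-u}}{u+2W}\,du$ and check by direct differentiation (using $\frac{d}{dW}e^{2W}E_1(2W)=2e^{2W}E_1(2W)-1/W$) that $\phi'=g'$, so that $g-\phi$ is constant. The constant is pinned down by the $W\downarrow0$ limit. For $\phi$, the expansion $E_1(z)=-\gamma-\log z+O(z)$ gives $\phi(0^+)=\gamma+\log 2=\cnull$. For $g$, dominated convergence (the densities \eqref{eq:density-IG} are bounded by $e\,(2\pi x^3)^{-1/2}e^{-1/(2x)}$ for $W\le 1$, against which $\lvert\log x\rvert$ is integrable) reduces $\lim_{W\downarrow0}g(W)$ to $\frac{1}{\sqrt{2\pi}}\int_0^\infty(\log x)\,x^{-3/2}e^{-1/(2x)}\,dx$; the substitution $y=(2x)^{-1}$ turns this into $\frac{1}{\sqrt\pi}\int_0^\infty(-\log2-\log y)\,y^{-1/2}e^{-y}\,dy=-\log2-\psi(\tfrac12)=\gamma+\log2=\cnull$, using $\psi(\tfrac12)=-\gamma-2\log2$. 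This gives $g=\phi$, i.e.\ the first equality of \eqref{eq:exp-log-X} together with \eqref{eq:exp-log-X-lim-W-to-0}. The second equality of \eqref{eq:exp-log-X} then follows by integrating $\int_{2W}^\infty(\log t)e^{-t}\,dt$ by parts and invoking $\int_0^\infty(\log t)e^{-t}\,dt=-\gamma$ from \eqref{eq:euler-mascheroni}.

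For the bounds \eqref{eq:upper-bound-log-min}: positivity of $E_1$ makes $g'<0$, so $g$ decreases from its value $\cnull$ at $0^+$, giving $g\le\cnull$; dropping the positive integral term gives $g\le-\log W$ (equivalently Jensen, $g\le\log E[X_W]=-\log W$). For the lower bound, the substitution $u=2Ws$ rewrites $\int_0^\infty\frac{e^{-u}}{u+2W}\,du=\int_0^\infty\frac{e^{-2Ws}}{1+s}\,ds$; comparing with the Frullani identity $\log(1+\tfrac1{2W})=\int_0^\infty e^{-2Ws}\frac{1-e^{-s}}{s}\,ds$ and the elementary inequality $\frac{1}{1+s}\le\frac{1-e^{-s}}{s}$ (which is just $1+s\le e^{s}$) yields $\int_0^\infty\frac{e^{-u}}{u+2W}\,du\le\log(1+\tfrac1{2W})$, whence $g(W)\ge-\log W-\log\frac{2W+1}{2W}=-\log(W+\tfrac12)$.

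For the refined bounds \eqref{eq:bound-log-X-small-W} I would use the second form $g(W)-\cnull=e^{2W}\int_0^{2W}(\log t+\gamma)e^{-t}\,dt$: when $0<W\le\tfrac12 e^{-\gamma}$ the integrand is nonpositive on $(0,2W]$ (since $\log t+\gamma\le0$ there), re‑proving $g\le\cnull$, while bounding $e^{-t}\le1$ gives $\bigl\lvert\int_0^{2W}(\log t+\gamma)e^{-t}\,dt\bigr\rvert\le\int_0^{2W}(-\log t-\gamma)\,dt=2W(1-\cnull-\log W)$; because $2W\le e^{-\gamma}<\log2$ forces $e^{2W}\le2$, this produces the coefficient $4W$ and hence $g(W)-\cnull\ge-4W(1-\cnull-\log W)$, which is exactly \eqref{eq:bound-log-X-small-W}. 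The only real obstacle is the analytic bookkeeping: justifying the Fubini interchange, the differentiation under the integral, and the dominated convergence at $W\downarrow0$, together with the clean evaluation $g'(W)=-2e^{2W}E_1(2W)$. Once that closed form and the single anchoring value $g(0^+)=\cnull$ are secured, the ODE matching and all the inequalities are routine.
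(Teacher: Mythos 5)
Your proposal is correct, but it reaches the identity \eqref{eq:exp-log-X} by a genuinely different route. The paper computes the density of $\log X_W$, rewrites it in terms of $\cosh$, and identifies $E[\log X_W]$ with $-\log W+\sqrt{2W/\pi}\,e^W\partial_pK_p(W)|_{p=-1/2}$, then imports the order-derivative formula for the modified Bessel function at $p=\tfrac12$ from the Ryavec preprint. You instead combine the explicit Laplace transform \eqref{eq:laplace-trafo-IG} with Frullani's representation of the logarithm to get $g(W)=E[\log X_W]$ as an explicit $s$-integral, compute $g'(W)=-2e^{2W}E_1(2W)$ in closed form, match it against the derivative of the candidate $\phi(W)=-\log W-\int_0^\infty e^{-u}(u+2W)^{-1}\,du$, and pin the constant via the limit $W\downarrow 0$ (which you evaluate on both sides, also yielding \eqref{eq:exp-log-X-lim-W-to-0}). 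Your route is more self-contained: it avoids the Bessel-function machinery and the external reference, at the cost of the ODE-matching detour and the analytic bookkeeping you flag (Fubini, differentiation under the integral, dominated convergence at $W\downarrow0$) — all of which are routine here, since the differentiated integrand is locally uniformly dominated and the $W\le 1$ densities admit the integrable envelope you exhibit. Your closed form for $g'$ also gives the upper bound $E[\log X_W]\le\cnull$ immediately from monotonicity plus the anchoring value at $0^+$, where the paper instead argues via the sign pattern of $(\log t+\gamma)e^{-t}$ and the vanishing of $\int_0^\infty(\log t+\gamma)e^{-t}\,dt$; and your Frullani comparison $\frac{1}{1+s}\le\frac{1-e^{-s}}{s}$ reproves from scratch the inequality $e^xE_1(x)\le\log(1+1/x)$ that the paper cites from Abramowitz--Stegun. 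The second equality in \eqref{eq:exp-log-X} and the refined bound \eqref{eq:bound-log-X-small-W} you handle essentially as the paper does.
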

\begin{proof}
  By \eqref{eq:density-IG}, the density of $X_W$ is given by 
  \begin{align}
    f(x)=\sqrt{\frac{1}{2\pi x^3}} e^{-\frac{(Wx-1)^2}{2x}}, 
  \end{align}
  $x>0$. Hence, $\log X_W$ has the density
  \begin{align}
    g(u)=f(e^u)e^u
    =\frac{1}{\sqrt{2\pi}}e^{-\frac12(We^u-1)^2e^{-u}}e^{-\frac{u}{2}},
  \end{align}
  $u\in\R$, with respect to the Lebesgue measure on $\R$.
  Rewriting the exponent of the first exponential as 
  \begin{align}
    \frac12(We^u-1)^2e^{-u}
    =\frac12W(We^u+(We^u)^{-1}-2)=W[\cosh(u+\log W)-1]
  \end{align}
  yields the density
 \begin{align}
    \label{eq:density-log-1-durch-beta}
   g(u)=\frac{1}{\sqrt{2\pi}}
    e^{-W[\cosh(u+\log W)-1]-\frac{u}{2}}, \quad u\in\R. 
  \end{align}
  Using the substitution $v=u+\log W$, we obtain
  \begin{align}
    E[\log X_W]=& \frac{1}{\sqrt{2\pi}}\int_\R u
    e^{-W[\cosh(u+\log W)-1]-\frac{u}{2}}\, du\nonumber\\
    =&-\log W+\sqrt{\frac{W}{2\pi}}e^W\int_\R v
       e^{-W\cosh v-\frac{v}{2}}\, dv.
       \label{eq:intermediate-log-X}
  \end{align}
Recall the modified Bessel function of second kind $K_p$, $p\in\R$. 
By \cite[9.6.24]{abramowitz-stegun}, for $w>0$, one has
\begin{align}
  K_p(w)=\int_0^\infty e^{-w\cosh v}\cosh(p v)\, dv
  =\frac12\int_\R e^{-w\cosh v}e^{p v}\, dv. 
\end{align}
Taking the derivative with respect to $p$ yields
\begin{align}
  \label{eq:partial-p-K-p}
  \partial_pK_p(w)
  =\frac12\int_\R v e^{-w\cosh v} e^{p v}\, dv.
\end{align}
Combining this with \eqref{eq:intermediate-log-X}, we obtain 
\begin{align}
  E[\log X_W]=&-\log W+\sqrt{\frac{2W}{\pi}}e^W \partial_pK_p(W)|_{p=-\frac12}. 
\end{align}
By \cite[Page 1]{ryavec-2021-besselk},
\begin{align}
   \partial_pK_p(w)|_{p=\frac12}=&K_{\frac12}(w)\int_0^\infty\frac{e^{-u}}{u+2w}\, du.
\end{align}
Combining this with $K_p(w)=K_{-p}(w)$, cf.\ \cite[9.6.6]{abramowitz-stegun},
and $K_{\frac12}(w)=\sqrt{\pi/(2w)}e^{-w}$, cf.\ \cite[10.2.17]{abramowitz-stegun},
yields
\begin{align}
  \label{eq:partial-p-K-p-2}
  \partial_pK_p(w)|_{p=-\frac12}(w)=-\partial_pK_p(w)|_{p=\frac12}(w)
  =&-\sqrt{\frac{\pi}{2w}}e^{-w}\int_0^\infty\frac{e^{-u}}{u+2w}\, du.
\end{align}
This proves the first equality in the claim \eqref{eq:exp-log-X}. In particular,
$E[\log X_W]\le -\log W$.

To prove the second equality in \eqref{eq:exp-log-X}, we observe that
  \begin{align}
    \label{eq:int-log-t-plus-gamma-0}
    \int_0^\infty(\log t+\gamma)e^{-t}\, dt=0  
  \end{align}
   by the expression
  \eqref{eq:euler-mascheroni} for the Euler
  Mascheroni constant; note that the integrand is integrable both at
  $t=0$ and $t=\infty$. Thus, using integration by parts, we conclude
  \begin{align}
   & e^{2W}\int_0^{2W}(\log t+\gamma)e^{-t}\, dt
    =-e^{2W}\int_{2W}^\infty(\log t+\gamma)e^{-t}\, dt\nonumber\\
    =&-(\log(2W)+\gamma)-\int_{2W}^\infty\frac{e^{2W-t}}{t}\, dt
       =-\log W-\log 2-\gamma-\int_0^\infty\frac{e^{-u}}{u+2W}\, du.
  \end{align}
  Next, we show $f(W):=\int_0^{2W}(\log t+\gamma)e^{-t}\, dt\le 0$.
  Note that the integrand $(\log t+\gamma)e^{-t}$
  is negative for $0<t<e^{-\gamma}$ and positive for $t>e^{-\gamma}$.
  Hence, the function $f$ is decreasing
  on the interval $[0,e^{-\gamma}]$ and increasing on $[e^{-\gamma},\infty]$.
  Since $f(0)=0=f(\infty)$ by \eqref{eq:int-log-t-plus-gamma-0},
  the claim $f\le 0$ and hence $E[\log X_W]\le\cnull$ follow. We conclude that
  the upper bound in \eqref{eq:upper-bound-log-min} is valid.

  We proceed with the lower bound in \eqref{eq:upper-bound-log-min}.
  We have
  \begin{align}
    \int_0^\infty\frac{e^{-u}}{u+2W}\, du
    =e^{2W}E_1(2W)\le\log\left(1+\frac{1}{2W}\right)
  \end{align}
  with the exponential integral $E_1(x)=\int_x^\infty\frac{e^{-u}}{u}\, du$
  using the known bound $e^x E_1(x)\le\log(1+1/x)$ for all $x>0$, cf.\
  \cite[5.1.20]{abramowitz-stegun}. Substituting this in \eqref{eq:exp-log-X}, 
  we conclude
  \begin{align}
    E[\log X_W]\ge -\log W - \log\left(1+\frac{1}{2W}\right)
    =-\log(W+\tfrac12). 
  \end{align}
  Assume now $0<W\le\frac12 e^{-\gamma}$. 
  The upper bound in \eqref{eq:bound-log-X-small-W} is already contained
  in \eqref{eq:upper-bound-log-min}; it remains to show the lower bound.
  We estimate the last integral in \eqref{eq:exp-log-X}, using that
  $e^{2W}\le\exp(e^{-\gamma})=1.753\ldots\le 2$ and $\log t+\gamma\le 0$
  hold for $0<t\le e^{-\gamma}$, to obtain 
  \begin{align}
    E[\log X_W]-\cnull
    =&e^{2W}\int_0^{2W}(\log t+\gamma)e^{-t}\, dt
       \ge 2\int_0^{2W}(\log t+\gamma)\, dt\nonumber\\
    =&4W(\log W+\cnull-1).
  \end{align}
  This proves \eqref{eq:bound-log-X-small-W} and \eqref{eq:exp-log-X-lim-W-to-0}. 
\end{proof}

We remark that the upper bound $E[\log X_w]\le\cnull$ in \eqref{eq:bound-log-X-small-W}
can also be obtained from \eqref{eq:bound-exp-X-hoch-alpha-C-alpha}
in the form $E[(X_W^\alpha-1)/\alpha]\le (C_\alpha-1)/\alpha$, taking
the limit as $\alpha\downarrow 0$, interchanging limit and expectation,
and using $\partial_\alpha C_\alpha|_{\alpha=0}=\cnull$.
\end{appendix}

\paragraph{Acknowledgements.}
This work is funded by the Deutsche Forschungsgemeinschaft (DFG, German Research Foundation) within SPP 2265 Random Geometric Systems (Project number 443757604)
and the Hausdorff Center for Mathematics (project ID 390685813). 
The authors would like to thank the Isaac Newton Institute for Mathematical Sciences, Cambridge, for support and hospitality during
the programme \emph{Stochastic systems for anomalous diffusion} (EPSRC grant no EP/R014604/1.34), 
where part of this work was undertaken. 
 
\bibliographystyle{alpha}

\end{document}